\theoremstyle{plain}
\newtheorem{theorem}{Theorem}
\newtheorem{lemma}{Lemma}
\newtheorem{corollary}{Corollary}
\newtheorem{proposition}{Proposition}
\theoremstyle{definition}
\newtheorem{definition}{Definition}
\newtheorem{example}{Example}
\newtheorem{remark}{Remark}
\newcommand{\A}{{\mathcal A}}
\newcommand{\B}{{\mathcal B}}
\newcommand{\C}{{\mathcal C}}
\newcommand{\E}{{\mathcal E}}
\newcommand{\G}{{\mathcal G}}
\renewcommand{\H}{{\mathcal H}}
\newcommand{\K}{{\mathcal K}}
\renewcommand{\L}{{\mathcal L}}
\newcommand{\X}{{\mathcal X}}
\newcommand{\V}{{\mathcal V}}
\renewcommand{\S}{{\mathcal S}}
\renewcommand{\P}{{\mathcal P}}
\renewcommand{\O}{{\mathcal O}}
\newcommand{\bp}{{\boldsymbol{p}}}
\newcommand{\al}{\alpha}
\newcommand{\be}{\beta}
\newcommand{\lam}{\lambda}
\newcommand{\si}{\sigma}
\newcommand{\cC}{{\mathscr{C}}}
\newcommand{\cR}{{\mathscr{R}}}
\newcommand{\define}{\stackrel{\mbox{\tiny $\triangle$}}{=}}
\newcommand{\et}{{\emph{et al.}}}
\newcommand{\toG}{{\theta_1}(\G)}
\newcommand{\ttG}{{\theta_2}(\G)}
\newcommand{\tcG}{{\theta^{\text{c}}(\G)}}
\newcommand{\tto}{{\theta_1}}
\newcommand{\ttt}{{\theta_2}}
\newcommand{\ttc}{{\theta^{\text{c}}}}
\newcommand{\knn}{{\mathcal{K}_{n,n}}}
\newcommand{\knr}{{\mathcal{K}_{n^{[r]}}}}
\newcommand{\AB}{{|\A||\B|}}
\newcommand{\pr}{{\text{Prob}}}
\begin{document}

\title{Latent Network Features and Overlapping Community Discovery via Boolean Intersection Representations
\thanks{%
S. H. Dau and O. Milenkovic are with the Coordinated Science Laboratory, University of Illinois at Urbana-Champaign, 1308 W. Main Street, Urbana, IL 61801, USA. Emails: \{hoangdau, milenkov\}@illinois.edu.}
} 
\author{Son Hoang Dau, \emph{Member}, and Olgica Milenkovic, \emph{Senior Member}, \emph{IEEE}}

\date{}
\maketitle
\pagestyle{empty}
\vspace{-0.1in}
\begin{abstract}
We propose a new latent Boolean feature model for complex networks that captures different types of node interactions and network communities. 
The model is based on a new concept in graph theory, termed the Boolean intersection representation of a graph, which generalizes
the notion of an intersection representation. We mostly focus on one form of Boolean intersection, termed \emph{cointersection}, and 
describe how to use this representation to deduce node feature sets and their communities. We derive several general bounds on the minimum number of features used in cointersection representations and discuss graph families for which exact cointersection characterizations are possible. Our results also include algorithms for finding optimal and approximate cointersection representations of a graph. 
\end{abstract}
\vspace{-0.05in}
\section{Introduction}
\label{sec:intro}
An important task in network analysis is to understand the mechanism behind the formation of a given complex network. 
\emph{Latent feature models} for networks 
seek to explain the observed pairwise connections among the nodes in a network by associating to each node a set of features
and by setting rules based on which pairs of nodes are connected according to their features. 
Inference of latent network features not only allows for the discovery of community 
structures in networks via association with features but also aids in predicting unobserved connections. As such, feature inference is invaluable in the study of social networks, protein complexes and gene regulatory modules.

Probabilistic latent feature models for networks are usually studied via machine learning techniques; known problems and analytic approaches include the Binary Matrix Factorization model~\cite{MeedsGharamaniNealRowisNIPS2006}, the Mixed-Membership Stochastic Block model~\cite{AiroldiBleiFienbergXing2008}, the Infinite Latent Feature/Attribute model~\cite{MillerJordanGriffithsNIPS2009, PallaKnowlesGhahramaniICML2012}, the Multiplicative
Attribute Graph model~\cite{KimLeskovec2011}, the Attribute Graph Affiliation model~\cite{YangLeskovecICDM2012}, and the Cluster Affiliation model (or BIGCLAM)~\cite{YangLeskovecWSDM2013}. In contrast, almost nothing is known about deterministic, combinatorial latent feature models.  

In the recent work of Tsourakakis~\cite{TsourakakisWWW2015},
a probabilistic latent feature model for networks was proposed
that implicitly uses the notion of intersection representations of
graphs~\cite{ErdosGoodmanPosa1966,Hefner1991, ChungWest1994} and builds upon the overlapping community detection approach of
Bonchi {\et}~\cite{BonchiGionisUkkonenICDM2011}. 
More specifically, in this model one fixes the total number of features and
tries to assign to each vertex a subset of features in a way that maximizes a certain \emph{score}.
Here, the score of a specific feature assignment is the count of unordered pairs of vertices $(u,v)$
that satisfies the so-called \emph{Intersection Condition}, which states that $u$ and $v$ are adjacent
if and only if they share at least one common feature. 
In particular, if one insists on a \emph{perfect} score, i.e., a score equal to $\binom{n}{2}$, 
then the minimum number of features required reduces to the \emph{intersection number} of the graph~\cite{ErdosGoodmanPosa1966}. 
An assignment of sets of features to vertices that achieves the perfect score is known as an \emph{intersection representation} of a graph (see Fig.~\ref{fig:intersection})\footnote{The intersection representation of graph arises in numerous problems such as the
keyword conflict problem, the traffic phasing problem, and the competition graphs from food
webs, to name a few, and has been extensively studied in the literature (see, for instance~\cite{Pullman1983, Roberts1985}).}. 
If in the Intersection Condition one insisted on $u$ and $v$ sharing at least $p \geq 1$ common features, achieving a perfect score would require a minimum number of features equal to the $p$-intersection number of the graph~\cite{Hefner1991, ChungWest1994}. 
\vspace{-10pt}
\begin{figure}[H]
\centering
\includegraphics[scale=0.85]{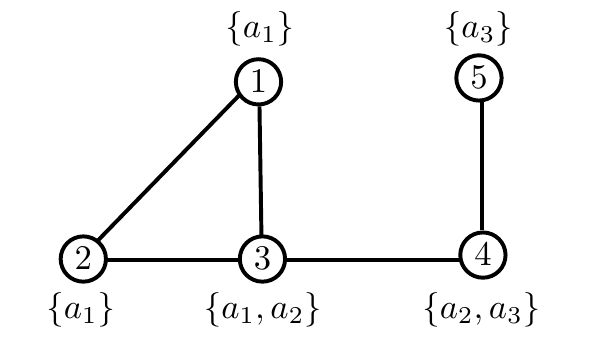}
\caption{Illustration of an intersection representation of a graph from~\cite{TsourakakisWWW2015}. Vertices are assigned
subsets from the feature set $\A = \{a_1,a_2,a_3\}$ so that two vertices are adjacent if and only if 
they share at least one common feature. In this case, the intersection number is three.}
\label{fig:intersection}
\vspace{-5pt}
\end{figure}
Intersection representations elucidate \emph{overlapping community structures} via a simple generative principle: one feature - one community. As an illustrative example, each feature in Fig.~\ref{fig:intersection} may describe one community; the triangle forms one community defined by feature $a_1$, and the remaining two edges are defined by features $a_2$ and $a_3$, respectively. Note that all communities are cliques, and that they may overlap (intersect).

We propose to extend the combinatorial variant of the model studied by Bonchi {\et}~\cite{BonchiGionisUkkonenICDM2011} and by 
Tsourakakis~\cite{TsourakakisWWW2015} to a much more general setting by using Boolean functions of features that can express more complicated interactions among nodes (vertices). For instance, suppose that there are three different types of features, namely `Family member', `City', and `Hobby'. The Boolean function $f(x_1, x_2, x_3) =x_1 \vee (x_2 \wedge x_3)$ can be used to express the connection rule that two people are Facebook friends if and only if either they are family members or they have lived in at least one common city and shared at least one common hobby. As such, it asserts that the `Family' feature is more relevant than either of the `City' or `Hobby' features. More generally, we can use any Boolean function $f=f(x_1,\ldots,x_r)$ together with a vector $\bp = (p_1,\ldots,p_r)$, $p_i \geq 1$,
to describe a connectivity rule based on $r$ different types of features in which the requirement `sharing at least one common feature 
of type $\A_i$' is replaced by the requirement `sharing at least $p_i$ common features of type $\A_i$'.

In the scope of this paper, we mostly focus on a basic building block of Boolean functions, namely the AND function of two variables $f(x_1,x_2) = x_1 \wedge x_2$. It is straightforward to see that the Boolean OR function leads to results identical to those obtained for the simple intersection problem, and results obtained for AND functions allow one to easily extend all the proposed approaches to the case of Boolean functions that include both AND and OR operations.
For simplicity, we also consider $(p_1,p_2) = (1,1)$. 
To illustrate the latent feature model arising in this setup, we consider the example in Fig~\ref{fig:illustration}. 
The network has five nodes, which represent five different people. 
Each person is assigned \emph{two} distinct sets of features, one representing the hobbies
that the person has and the other representing the cities that the person has lived in.
For instance, let $\A = \{a_1, a_2\}$ be such that $a_1$ stands for \emph{fishing} and $a_2$ stands for \emph{playing soccer}, 
and let $\B = \{b_1, b_2\}$ be such that $b_1$ stands for \emph{Hanoi} and $b_2$ stands for \emph{Champaign}. 
Then Person $4$ is assigned two sets of features, namely $\{a_2\}$ and $\{b_1,b_2\}$, which states that this person
has soccer as a hobby and has lived in both Hanoi and Champaign (to avoid notational clutter, we use $\{{a_2 \, | \, b_1,b_2\}}$ to denote pairs of sets).
\vspace{-8pt}
\begin{figure}[htb]
\centering 
\includegraphics[scale=0.85]{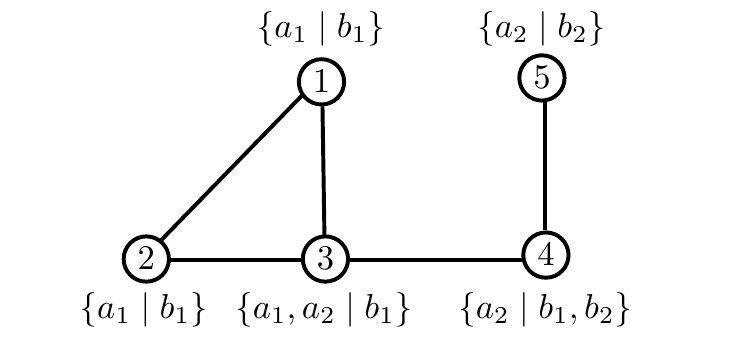}
\caption{Each node is assigned a set of features from $\A=\{a_1,a_2\}$ and a set of
features from $\B = \{b_1,b_2\}$. Two nodes are connected by an edge if and only if
they share at most one feature from $\A$ and one feature from $\B$.} 
\label{fig:illustration}
\vspace{-5pt}
\end{figure}
Suppose that two people are connected if and only if they share at least one common hobby AND they have lived in at least one common city.  
For instance, Person $3$ and Person $4$ are connected because they have soccer as a common hobby and they both have lived in Hanoi. However, Person $3$ and Person $5$ are not connected, even though they both like playing soccer, because they have not lived in the same city. 

Given the nodes' corresponding sets of features and the rules as of how to connect two nodes,
it is clear how the graph emerges. The problem of interest is the opposite: 
\emph{under the assumption that the graph is given and that each node is assigned two subsets of features from $\A$ and $\B$, 
where $\A$ and $\B$ are two disjoint sets of features, and that two nodes are connected
if and only if they share at least one feature from $\A$ and at least one feature from $\B$, how can we infer the latent features assigned to the nodes?} Usually, the latent
features are abstracted as elements from a discrete set, and the mapping between the elements and the real features is determined based on available data. 

Our first aim is to determine the smallest
possible number of features $\min (|\A|+|\B|)$ needed to explain a given graph. 
We refer to this quantity as the \emph{cointersection number} of a graph. 
Note that the notions of cointersection number and cointersection representation of graphs have not been studied before in the literature.
We then proceed to establish general lower and upper bounds on the cointersection number of a graph via its intersection number. In addition, we derive several explicit bounds for some particular families of graphs,
including bipartite graphs, multipartite graphs, and graphs with bounded degrees
(Section~\ref{sec:upper_bounds}). 
We also examine the tightness of these bounds (Section~\ref{sec:tightness}). In particular, we describe an interesting connection between the cointersection representations of certain complete multipartite graphs and affine planes. We provide an exact algorithm to find an 
optimal cointersection representation of a graph by using SAT solvers (Section~\ref{subsec:SAT}).
We also develop a randomized algorithm to find an approximate cointersection representation of a graph
in Section~\ref{subsec:MCMC}. 
Finally, we extend the bounds on
the cointersection number for the case when a general Boolean function is used instead of
the AND function (Section~\ref{sec:Boolean}). 

As a parting remark, we point out that there exist many other applications of latent feature modeling which pertain to communication networks, spectrum allocation being one particular example of interest. We defer the discussion of these topics to a companion paper.

\section{Preliminaries}
\label{sec:pre}
We start by formally introducing our new latent feature model and describing its relevant properties.

\subsection{The cointersection Model}

\begin{definition}
\label{def:cir}
Let $\A$ and $\B$ be two disjoint nonempty subsets of features of cardinalities $\al$ and $\be$, 
respectively. 
An $(\al \mid \be)$\emph{-cointersection representation} (CIR) for a graph $\G = (\V,\E)$ is a family
$\cR = \{(A_v \mid B_v): v \in \V\}$, where $A_v \subseteq \A$, $B_v \subseteq \B$, that satisfies 
the so-called \emph{cointersection Condition}:\vspace{-5pt}
\[
(u,v) \in \E \Longleftrightarrow A_u \cap A_v \neq \varnothing \text{ and } B_u \cap B_v \neq \varnothing. \vspace{-3pt}
\]
Let
$\ttc(\G) = \min_{\cR} (|\A| + |\B|)$,
where the minimum is taken over all cointersection representations $\cR$ of $\G$. 
Then $\tcG$ is called the \emph{cointersection number} of $\G$.
A cointersection representation that uses exactly $\tcG$ features is called \emph{optimal}.   
\end{definition}
It is clear 
that the cointersection number of a graph
is precisely the smallest number of features used to describe the network in the Boolean AND model (see Section~\ref{sec:Boolean}).

Fig.~\ref{fig:illustration} depicts a $(2 \mid 2)$-CIR. 
We can verify easily that for this graph, $\ttc = 4$, and hence, this representation is optimal.
If we refer to the set of nodes that have a particular common feature as a \emph{community}, 
then the community structure induced by this representation is illustrated in Fig.~\ref{fig:community}.
Note that in this setting communities are no longer restricted to be cliques, which is a more realistic modeling assumption. Furthermore, $u$ and $v$ are adjacent if and only if they belong to the \emph{intersection} of one community of type $\A$
and another community of type $\B$. Note that communities may also be defined by pairs of features, in which case they
form cliques and represent intersections of individual feature communities. 
\vspace{-5pt} 
\begin{figure}[H]
\centering
\includegraphics[scale=1]{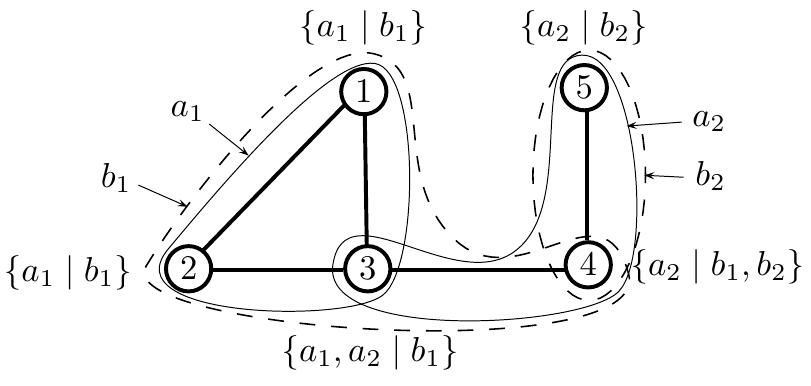}\vspace{-5pt}
\caption{The community structure induced by the features in a cointersection representation of the graph.
The vertices are grouped into different communities, each of which corresponds to an $\A$-feature
(solid closed curve) or a $\B$-feature (dashed closed curve).
The pair $(u,v)$ is an edge if and only if both $u$ and $v$ belong to a common $\A$-community and 
a common $\B$-community. In other words, every edge lies inside both a solid curve and a dashed curve.}
\label{fig:community} 
\end{figure}

In the next subsection, we review the concepts and some well-known results on the intersection
number and its generalization, the $p$-intersection number. 

\subsection{The Intersection Number and the $p$-Intersection Number}

Clearly, an $(\al \mid 1)$-CIR of a graph
is equivalent to an \emph{intersection representation} of the same graph that uses $\al$ features~\cite{ErdosGoodmanPosa1966}. An intersection representation of a graph is equivalent to an \emph{edge clique cover}, i.e. a set of complete subgraphs (cliques) of a graph that covers every edge at least once. The \emph{intersection number} of a graph $\G$, denoted
by $\toG$, is the smallest number of features used in an intersection representation 
of the graph, or the size of a smallest edge clique cover of that graph.
The \emph{$p$-intersection number} of a graph, denoted by $\theta_p(\G)$, 
is the smallest possible number of features to assign to the vertices such that two vertices are adjacent if and only if they share at least
$p$ common features (see, e.g.~\cite{Hefner1991, ChungWest1994, EatonGouldRodl1996}).
We list below a couple of well-known results on the intersection number and the $p$-intersection
number of a graph. 
\begin{theorem}[Erd\"{o}s, Goodman, and P\'{o}sa~\cite{ErdosGoodmanPosa1966}]
If $\G$ is any graph, then $\toG \leq \lfloor n^2/4 \rfloor$. 
\end{theorem}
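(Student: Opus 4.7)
The plan is to prove the bound by induction on the number of vertices $n$. The base cases $n \le 2$ are immediate since $\G$ has at most one edge, and a single edge is its own clique. For the inductive step I would split into two cases depending on whether $\G$ contains a triangle.

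If $\G$ is triangle-free, then every edge of $\G$ is a maximal clique, so any edge clique cover must use each edge as a separate clique, giving $\toG = |\E|$. By Mantel's theorem (the triangle-free case of Tur\'an's theorem), a triangle-free graph on $n$ vertices has at most $\lfloor n^2/4 \rfloor$ edges, and the bound follows immediately.

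The substantive case is when $\G$ contains a triangle $\{a,b,c\}$. Let $\G' = \G - \{b,c\}$ and apply the inductive hypothesis to obtain an edge clique cover of $\G'$ of size at most $\lfloor (n-2)^2/4 \rfloor$. It remains to cover the edges of $\G$ incident to $b$ or $c$ (including $(b,c)$ itself) using few extra cliques. I would partition $\V \setminus \{b,c\}$ into $X = N(b) \cap N(c)$, $Y = N(b) \setminus (N(c) \cup \{c\})$, $Z = N(c) \setminus (N(b) \cup \{b\})$, and the remaining vertices (which contribute no incident edges). For each $x \in X$, use the triangle $\{b,c,x\}$, which simultaneously covers the edges $(b,x)$, $(c,x)$, and $(b,c)$; for each $y \in Y$, use the edge $\{b,y\}$; for each $z \in Z$, use the edge $\{c,z\}$. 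Because $a \in X$, the edge $(b,c)$ is indeed covered. The number of new cliques is $|X| + |Y| + |Z| \le n-2$, so the total cover has size at most $\lfloor (n-2)^2/4 \rfloor + (n-2) \le \lfloor n^2/4 \rfloor$, where the last inequality uses the identity $\lfloor n^2/4 \rfloor - \lfloor (n-2)^2/4 \rfloor = n-1$, valid for all $n \ge 2$ regardless of parity.

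The main obstacle is showing that the budget $n-1$ gained from going down to $\G - \{b,c\}$ really suffices to absorb all edges incident to $b$ and $c$. The crucial saving comes from the common neighborhood $X$: each triangle $\{b,c,x\}$ pays for two incident edges rather than one, and the fact that $X \ne \varnothing$ (guaranteed by the triangle $\{a,b,c\}$) lets a single triangle-clique simultaneously discharge the edge $(b,c)$. Choosing to remove a pair of triangle-vertices rather than a single vertex is what aligns the arithmetic, since deleting only one vertex would leave a budget of roughly $n/2$ against up to $n-1$ incident edges and fail.
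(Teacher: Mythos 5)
The paper does not actually prove this statement; it is quoted verbatim as a classical result of Erd\H{o}s, Goodman, and P\'{o}sa with a citation, so there is no in-paper argument to compare against. Your induction is correct, and it is essentially the standard proof of this theorem: the triangle-free case follows from Mantel's theorem with each edge serving as its own clique (note that only the inequality $\toG \le |\E|$ is needed there, not the equality you assert, though the equality is also true and is used elsewhere in the paper), and in the inductive step deleting two vertices $b,c$ of a triangle leaves a budget of $\lfloor n^2/4 \rfloor - \lfloor (n-2)^2/4 \rfloor = n-1$, which your accounting respects: at most $|X|+|Y|+|Z| \le n-2$ new cliques, with the triangles through common neighbors covering two incident edges at once and the nonemptiness of $X$ (guaranteed by the vertex $a$) discharging the edge $(b,c)$. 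The arithmetic identity holds for both parities of $n$, and the base cases are fine, so the argument is complete.
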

\begin{theorem}[Alon~\cite{Alon1986}]
Let $\H$ be a graph on $n$ vertices with maximal degree at most $d$ and minimal degree at least
one, and let $\G = \overline{\H}$ be its complement. Then $\toG \leq 2e^2(d+1)^2\log_e n$. 
\end{theorem}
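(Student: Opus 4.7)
The plan is a probabilistic method argument, following the observation that $\theta_1(\G)$ equals the minimum size of a family of independent sets of $\H = \overline{\G}$ that together cover every non-edge of $\H$ (equivalently, every edge of $\G$). So the goal is to construct, for appropriately chosen parameters, a random collection of independent sets of $\H$ and to verify via a union bound that with positive probability it covers all non-edges.

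First I would set the number of colors to $q = d+1$ and the number of independent trials to $t = \lceil 2 e^2 (d+1) \log_e n \rceil$. In each trial $i \in \{1,\ldots,t\}$, I would independently and uniformly assign each vertex of $\H$ a color in $\{1,\ldots,q\}$, and then for each color $j$ define
\[
W_{i,j} \;=\; \{\, v \in V(\H) : v \text{ has color } j \text{ and no } \H\text{-neighbor has color } j \,\}.
\]
By construction each $W_{i,j}$ is independent in $\H$, hence a clique in $\G$. The candidate edge clique cover is the family $\{W_{i,j} : 1\le i \le t,\ 1 \le j \le q\}$, of total size at most $qt$.

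Next I would bound the probability that a fixed non-edge $(u,v)$ of $\H$ is covered by trial $i$. Because color assignments are mutually independent, this event happens iff $u,v$ receive a common color $j$ and none of the at most $2d$ vertices in $N_{\H}(u) \cup N_{\H}(v)$ receives color $j$. Thus the probability is
\[
\sum_{j=1}^{q} \frac{1}{q^2}\left(1-\frac{1}{q}\right)^{|N_{\H}(u)\cup N_{\H}(v)|}
\;\ge\; \frac{1}{q}\left(1-\frac{1}{q}\right)^{2d}.
\]
Invoking the standard inequality $(1-1/n)^{n-1} \ge 1/e$ with $n = d+1$, one gets $(1-1/(d+1))^{2d} \ge 1/e^2$, hence trial $i$ covers $(u,v)$ with probability at least $1/(e^2(d+1))$. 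Since the $t$ trials are independent, the probability that $(u,v)$ is missed by every trial is at most $\bigl(1 - 1/(e^2(d+1))\bigr)^t \le \exp\bigl(-t/(e^2(d+1))\bigr)$.

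Finally I would union bound over the at most $n^2/2$ non-edges of $\H$: with the chosen $t$, the expected number of uncovered non-edges is strictly less than $1$, so some realization covers all non-edges, yielding an edge clique cover of $\G$ of size at most $qt = (d+1)\cdot 2e^2(d+1)\log_e n = 2e^2 (d+1)^2 \log_e n$. The main technical obstacle is keeping the constants tight in the probability estimate: one must show $(1-1/(d+1))^{2d}\ge 1/e^2$ uniformly in $d \ge 1$ (rather than only asymptotically), since this factor is what produces the explicit constant $2e^2$ in the final bound; minor auxiliary care is also needed to absorb the $\log 2$ from $\binom{n}{2}$ into the ceiling in $t$, which is why the hypothesis on minimum degree (and implicitly $n$ large enough relative to $d$) is convenient.
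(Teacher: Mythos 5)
This statement is quoted from Alon's 1986 paper and is not proved anywhere in the present paper, so there is no internal proof to compare against; I can only assess your argument on its own terms, and it is correct. The sets $W_{i,j}$ are indeed independent in $\H$ (hence cliques in $\G$); for a non-edge $(u,v)$ of $\H$ the vertices $u,v$ and $N_{\H}(u)\cup N_{\H}(v)$ are disjoint, so the product bound and the estimate $(1-\tfrac{1}{d+1})^{2d}\ge e^{-2}$ (valid for every $d\ge 1$ via $\ln(1+x)\le x$) are legitimate; and the slack between $\binom{n}{2}$ and $n^2$ does absorb the ceiling in $t$, so the final count $qt\le 2e^2(d+1)^2\log_e n$ survives, as you note. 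Your route is essentially the original probabilistic one: Alon generates one random independent set per round (keep each vertex with probability $1/(d+1)$, discard kept vertices with a kept neighbour) and uses $2e^2(d+1)^2\ln n$ rounds, whereas you extract $d+1$ independent sets per round from a uniformly random $(d+1)$-colouring; the per-pair coverage probabilities ($\ge 1/(e^2(d+1)^2)$ per set versus $\ge 1/(e^2(d+1))$ per round) make the two bookkeepings equivalent, with the same constants. The only cosmetic remark is that the minimum-degree hypothesis plays no role in your argument beyond guaranteeing $n\ge 2$, which is consistent with the way the bound is used in the paper.
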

\begin{theorem}[Eaton, Gould, and R\"{o}dl~\cite{EatonGouldRodl1996}]
For $p \geq 2$ and any graph $\G$ on $n$ vertices, $\binom{\theta_p(\G)}{p}
\geq \toG$. 
\end{theorem}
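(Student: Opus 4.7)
The plan is to produce a $1$-intersection (i.e., ordinary intersection) representation of $\G$ from any optimal $p$-intersection representation, by \emph{lifting} the feature assignment to the level of $p$-subsets. More precisely, suppose $\{S_v \subseteq \F : v \in \V\}$ is a $p$-intersection representation of $\G$ with $|\F| = \theta_p(\G)$, meaning $(u,v) \in \E \iff |S_u \cap S_v| \geq p$. For each vertex $v$, I would define a new feature set $T_v \seq \binom{\F}{p}$ by $T_v = \binom{S_v}{p}$, i.e., the collection of all $p$-element subsets of $S_v$.

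The central observation is the equivalence $|S_u \cap S_v| \geq p \Llra T_u \cap T_v \neq \varnothing$. The forward direction follows because any $p$-subset of $S_u \cap S_v$ is an element of both $T_u$ and $T_v$; conversely, if $P \in T_u \cap T_v$, then $P$ is a $p$-subset contained in both $S_u$ and $S_v$, forcing $|S_u \cap S_v| \geq p$. Combining this with the defining property of the $p$-intersection representation, we get $(u,v) \in \E \iff T_u \cap T_v \neq \varnothing$, which is precisely the (ordinary) Intersection Condition for $\{T_v\}$ over the ground set $\binom{\F}{p}$.

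Hence $\{T_v : v \in \V\}$ is a valid intersection representation of $\G$, and the total number of features it uses is at most $|\binom{\F}{p}| = \binom{\theta_p(\G)}{p}$. Since $\toG$ is by definition the minimum number of features in any intersection representation, we conclude $\toG \leq \binom{\theta_p(\G)}{p}$, which is the claimed inequality.

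There is no real obstacle in this argument: the entire content is the clean bijection between ``sharing $p$ common elements'' at the level of $\F$ and ``sharing at least one common element'' at the level of $\binom{\F}{p}$. The only minor subtlety worth commenting on is that some of the $p$-subsets in $\binom{\F}{p}$ may not actually appear in any $T_v$, so the construction yields an intersection representation whose ground set has size \emph{at most} $\binom{\theta_p(\G)}{p}$ (rather than exactly); this is what makes the bound an inequality rather than an equality, and is exactly what is claimed.
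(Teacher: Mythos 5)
Your argument is correct. The paper itself states this result only as a citation to Eaton, Gould, and R\"{o}dl and gives no proof, so there is nothing to diverge from; your lifting of each $S_v$ to $T_v=\binom{S_v}{p}$ is the standard argument, and it is essentially the same device the paper does use to prove Lemma~\ref{lem:lower_bound}: there, each pair $(a,b)\in\A\times\B$ indexes a clique and the resulting edge clique cover gives $\al\be\ge\toG$; here, each $p$-subset of the feature set indexes a clique (the vertices whose sets contain it), and your equivalence $|S_u\cap S_v|\ge p \Llra T_u\cap T_v\ne\varnothing$ is just the statement that these $\binom{\theta_p(\G)}{p}$ cliques cover exactly the edges. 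The only point worth a passing remark is that a vertex with $|S_v|<p$ gets $T_v=\varnothing$; such a vertex is necessarily isolated, and since $\toG$ equals the minimum size of an edge clique cover (isolated vertices need no features), this does not affect the bound.
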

\begin{theorem}[Eaton, Gould, and R\"{o}dl~\cite{EatonGouldRodl1996}]
Let $\G$ be a graph on $n$ vertices with maximum vertex degree $d$ and $p > 1$
be an integer, then $\theta_p(\G) \leq 3epd^2(d+1)^{1/p}n^{1/p}$. 
\end{theorem}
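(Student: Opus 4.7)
The plan is to use a probabilistic construction based on an edge clique cover of $\G$. Since $\G$ has maximum degree $d$, we can fix a cover $\mathcal{C} = \{C_1, \ldots, C_m\}$ with $m \le |E(\G)| \le nd/2$, using for instance the trivial cover in which each edge is its own $2$-clique; each vertex then belongs to at most $d$ cliques of the cover.

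Next, I would choose parameters $t$ (the total number of features) and an auxiliary parameter governing the random labels. For each clique $C_i$ independently, draw a random label $L_i \subseteq [t]$, e.g.\ by including each element of $[t]$ in $L_i$ independently with some probability $q$, or by choosing $L_i$ to be a uniformly random $p$-element subset of $[t]$. Then assign to each vertex $v$ the feature set $S_v = \bigcup_{C_i \ni v} L_i$. For any edge $(u,v) \in E$, there exists a clique $C \in \mathcal{C}$ containing both $u$ and $v$, so $L(C) \subseteq S_u \cap S_v$; it therefore suffices to show $|L(C)| \ge p$ with high probability. For any non-edge $(u,v)$, the vertices $u$ and $v$ share no common clique of $\mathcal{C}$, and so $|S_u \cap S_v| \le \sum |L_i \cap L_j|$, where the sum runs over the at most $d^2$ pairs $(C_i, C_j)$ with $C_i \ni u$ and $C_j \ni v$; each such intersection has small expected size, controlled by $t$ and $p$.

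With these estimates in hand, I would apply a Chernoff-type bound on each pair of vertices (lower tail for the "$\ge p$" condition on edges, and a bound on the probability of a "coincidence" of size $p$ for non-edges via a union bound over $p$-subsets of $[t]$), and then take a union bound over all $\binom{n}{2}$ pairs. This produces a sufficient condition of the form $t \ge \Omega\bigl(p d^2 ((d+1)n)^{1/p}\bigr)$, and substituting the explicit constants from the Chernoff estimates recovers the target bound $\theta_p(\G) \le 3epd^2(d+1)^{1/p}n^{1/p}$.

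The main obstacle is managing the delicate tension between the two required conditions: adjacent vertices must share at least $p$ features while non-adjacent vertices must share fewer than $p$, even though non-adjacent vertices can collect features from as many as $d^2$ pairs of distinct cliques. The $n^{1/p}$ factor in the bound is precisely what must be extracted here; it comes from the fact that a non-edge "failure" requires $p$ independent random coincidences, so the relevant probability scales like $t^{-p}$, and this $p$-th root dependence is what makes the probabilistic construction preferable to the naive $O(pnd)$ bound obtained by replicating each clique in an edge clique cover $p$ times.
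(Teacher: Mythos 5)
Your construction is the right one---independent random labels on the edges (the trivial edge clique cover), vertex feature sets taken as unions, and the observation that for a non-edge the two label sources are disjoint and hence independent---and this is exactly the skeleton the paper uses for its analogous bound (Theorem~\ref{thm:bounded_degree}, which is modeled on the Eaton--Gould--R\"{o}dl argument). But the concluding step fails quantitatively: a union bound over all $\binom{n}{2}$ pairs forces the per-pair failure probability below roughly $n^{-2}$, and your own estimate for a non-edge gives a failure probability of order $\bigl(epd^2/t\bigr)^p$ (at most $d^2$ clique pairs, each coincidence costing about $dp/t$, and $\binom{dp}{p}\le (ed)^p$ choices). Setting $\bigl(epd^2/t\bigr)^p \le n^{-2}$ yields only $t = \Omega\bigl(pd^2 n^{2/p}\bigr)$, i.e.\ an exponent $2/p$ rather than $1/p$; for $p=2$ this is the difference between $O(d^2 n)$ and the claimed $O(d^2\sqrt{n})$. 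So the final sentence of your plan, asserting that the union bound ``produces a sufficient condition of the form $t \ge \Omega\bigl(pd^2((d+1)n)^{1/p}\bigr)$,'' does not follow.

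The missing idea is to exploit locality instead of a global union bound: the bad event attached to a non-edge $(u,v)$ depends only on the labels of edges incident to $u$ or $v$, hence it is mutually independent of all but $D = O\bigl(n(d+1)\bigr)$ other bad events (linear, not quadratic, in $n$). Applying the Lov\'{a}sz Local Lemma with the condition $PD \le 1/4$, where $P \le \bigl(epd^2/t\bigr)^p$ (or the sharper $\bigl(d^2p/(t-dp)\bigr)^p$-type estimate), gives the requirement $t \gtrsim epd^2\bigl((d+1)n\bigr)^{1/p}$, which is exactly the stated bound $3epd^2(d+1)^{1/p}n^{1/p}$ after fixing constants. This is the route taken both in the original Eaton--Gould--R\"{o}dl proof and in the paper's proof of its cointersection analogue ($\ttc(\G) \le 16d^{5/2}\sqrt{n}$), where the dependency count $D = 2n(d+1)$ and the verification $PD \le 1/4$ replace your Chernoff-plus-union-bound step. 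A minor simplification you already half-adopt: taking each $L_i$ to be a uniformly random $p$-subset of $[t]$ makes the edge condition hold deterministically, so no lower-tail estimate is needed at all; only the non-edge events require probabilistic control.
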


\section{Lower and Upper Bounds on the cointersection Numbers of Graphs}
\label{sec:upper_bounds}

We now turn our attention to deriving upper bounds on the cointersection numbers $\ttc$ of arbitrary graphs, and explicit bounds on $\ttc$ for bipartite graphs, chordal graphs, and graphs with bounded vertex degrees. 
\begin{lemma}
\label{lem:upper_bound}
For any graph $\G$, one has $\tcG \leq 1 + \toG$.
\end{lemma}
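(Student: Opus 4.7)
The plan is to build a cointersection representation of $\G$ directly from an optimal intersection representation, by adjoining a single ``universal'' feature in the second feature set. This uses $\toG$ features on one side and exactly one feature on the other, giving the claimed bound.

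More concretely, I would start by fixing an optimal intersection representation of $\G$ that uses a feature set $\A$ with $|\A| = \toG$ features, and let $\{A_v : v \in \V\}$ with $A_v \subseteq \A$ denote the corresponding assignment, so that $(u,v) \in \E \Longleftrightarrow A_u \cap A_v \neq \varnothing$. Then I would introduce a new feature $b \notin \A$, let $\B = \{b\}$, and, for every $v \in \V$, define
\[
B_v = \{b\} \subseteq \B,
\]
so that the proposed representation is $\cR = \{(A_v \mid B_v) : v \in \V\}$ with $|\A| + |\B| = \toG + 1$.

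Next I would verify the cointersection condition. Since $B_u = B_v = \{b\}$ for all $u, v \in \V$, we have $B_u \cap B_v = \{b\} \neq \varnothing$ unconditionally, so
\[
A_u \cap A_v \neq \varnothing \text{ and } B_u \cap B_v \neq \varnothing \Longleftrightarrow A_u \cap A_v \neq \varnothing,
\]
which is exactly the intersection condition guaranteed by the chosen representation. Hence $\cR$ is a valid $(\toG \mid 1)$-CIR of $\G$, and therefore $\tcG \leq |\A| + |\B| = 1 + \toG$.

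There is essentially no obstacle in this argument; the only thing worth flagging is the degenerate situation where $\G$ has no edges (so $\toG = 0$), in which case one can either interpret the bound as taking $|\A| = 1$ with every $A_v = \varnothing$, or simply note that the lemma is vacuous here since the definition of CIR requires both $\A$ and $\B$ to be nonempty. In all nondegenerate cases the ``one extra feature'' trick gives the bound directly.
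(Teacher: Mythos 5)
Your proposal is correct and matches the paper's argument exactly: both take an optimal intersection representation with $\toG$ features and adjoin a single universal feature $b$ shared by every vertex, yielding a $(\toG \mid 1)$-CIR and hence $\tcG \leq 1 + \toG$. Your extra remark about the edgeless case is a reasonable clarification but not needed for the paper's statement.
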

\begin{proof}
Given an optimal intersection representation of $\G$, which uses 
$\tto$ features, we may create a $(\tto \mid 1)$-CIR of $\G$ as follows. 
If in the intersection representation of $\G$ the vertex $v$ is assigned
the set of features $\{a_1,\ldots, a_r\}$, then in the corresponding 
cointersection representation of $\G$, we assign to $v$ the 
sets of features $\{a_1,\ldots,a_r \mid b\}$, where $b \notin \{a_1, \ldots, a_{\toG}\}$. 
It is easy to verify that this feature assignment is indeed a $(\tto \mid 1)$-CIR of $\G$. 
\end{proof} 

Lemma~\ref{lem:upper_bound} immediately
implies some explicit upper bounds on the cointersection number of graphs. 
For instance, the following upper bound for \emph{complement of a sparse graph} 
is an obvious corollary of 
Lemma~\ref{lem:upper_bound} and~\cite[Theorem 1.4]{Alon1986}:  
if $\G$ is a graph on $n$ vertices with maximum degree at most $n-1$ and
minimum degree at least $n-d$ then 
$\tcG \leq 1 + 2e^2(d+1)^2 \ln n$.
Another immediate consequence of Lemma~\ref{lem:upper_bound} and \cite[Corollary 3.2]{ErdosOrdmanZalcstein1993} is that if $\G$ is a chordal graph on $n$ vertices with largest
clique of size $r$ then $\tcG \leq 1+\toG \leq n - r + 2$. 

We show next that a graph of bounded degree has a cointersection representation that uses $\O(\sqrt{n})$ features. Our probabilistic proof is based on the analysis in~\cite[Theorem 11]{EatonGouldRodl1996}.   

\begin{theorem} 
\label{thm:bounded_degree}
Let $\G$ be a graph on $n$ vertices, with edge set $\mathcal{E}$ and maximum vertex degree $\Delta(\G) \leq d$. Then 
$\tcG \leq 16d^{5/2}\sqrt{n}$.
\end{theorem}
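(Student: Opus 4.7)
The plan is to mirror the probabilistic alteration argument used to prove Theorem~11 in~\cite{EatonGouldRodl1996}. I would construct a random cointersection representation by independently labelling each edge of $\G$ with a pair of features, show that with positive probability only a few non-edges become defective, and then alter the representation to destroy each defect at a small cost in extra features.

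Concretely, I would take $|\A| = |\B| = 8 d^{5/2} \sqrt{n}$, as two disjoint feature sets. For every edge $e = uv \in \E$, I would choose $(a_e, b_e) \in \A \times \B$ uniformly at random, independently across edges, and set $A_v := \{a_e : e \ni v\}$ and $B_v := \{b_e : e \ni v\}$. Every edge of $\G$ is automatically satisfied, because $a_{uv} \in A_u \cap A_v$ and $b_{uv} \in B_u \cap B_v$, so only non-edges can fail the cointersection condition. For a non-edge $(u,v)$ the sets $E_u$ and $E_v$ are disjoint and each has at most $d$ elements, and a union bound over the at most $d^2$ cross-pairs yields $\Pr[A_u \cap A_v \neq \varnothing] \leq d^2/|\A|$, and similarly for $\B$. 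Since the $\A$-labels and $\B$-labels are chosen independently, the defect probability is at most $d^4/(|\A||\B|)$, and the expected number of defective non-edges is at most $\binom{n}{2}\, d^4/(|\A||\B|) = O(n/d)$.

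Next I apply an alteration step: for each defective non-edge $(u,v)$ and each pair $(e_1, e_2) \in E_u \times E_v$ with $b_{e_1} = b_{e_2}$, reassign $b_{e_1}$ to a fresh label unique to $e_1$. This destroys the $\B$-collision at $(u,v)$ and, because each fresh label appears on a single edge, creates no new collision elsewhere; in particular every edge of $\G$ continues to satisfy the cointersection condition. An analogous calculation bounds the expected number of fresh labels introduced also by $O(n/d)$, a lower-order correction that can be absorbed into the leading term by mildly shrinking the initial constants (for instance, by taking $|\A|=|\B|=7 d^{5/2}\sqrt{n}$ and reserving the remaining slots for the alteration). By the probabilistic method, some realisation of the random labelling achieves the desired bound $\tcG \leq 16 d^{5/2} \sqrt{n}$. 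The most delicate point is the alteration bookkeeping: one has to verify that processing all defective non-edges simultaneously does not re-introduce already-fixed collisions, which follows from the fact that the operation is monotone in the sense that each set $B_u \cap B_v$ only ever shrinks under the reassignments, and also that the extra-label budget is genuinely of lower order, which is what forces the specific form of the leading constant.
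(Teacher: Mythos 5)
There is a genuine gap, and it sits exactly where you flagged the argument as ``delicate'': the claim that the alteration cost is a lower-order correction is false in the main regime of interest. With $|\A|=|\B|=\Theta(d^{5/2}\sqrt{n})$, a fixed non-edge is defective with probability $\Theta\bigl(d^4/(|\A||\B|)\bigr)=\Theta\bigl(1/(dn)\bigr)$, so the expected number of defective non-edges is $\Theta(n/d)$ — and your expected fresh-label count is of the same order. But $n/d$ is \emph{not} $o\bigl(d^{5/2}\sqrt{n}\bigr)$ unless $n=O(d^{7})$; for bounded $d$ and large $n$ (precisely the sparse setting this theorem targets) the alteration budget $\Theta(n/d)$ grows linearly in $n$ and swamps the entire allowance $16d^{5/2}\sqrt{n}=\Theta(\sqrt{n})$. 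Markov's inequality cannot rescue this: with positive probability you still face $\Theta(n/d)$ defects, each consuming at least one fresh label in your scheme, so no mild shrinking of the initial constants (e.g.\ to $7d^{5/2}\sqrt{n}$) creates enough reserve. The local bookkeeping you worried about (reassigned labels only shrinking $B_u\cap B_v$ for non-adjacent pairs, edges remaining satisfied) is in fact fine; the failure is purely quantitative.

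This is why the paper — and the Eaton--Gould--R\"{o}dl argument you intend to mirror, which is itself a Local Lemma proof rather than an alteration proof — invokes the Lov\'{a}sz Local Lemma instead of a first-moment-plus-repair argument. One keeps your random edge-labelling and your bad events $E_{u,v}$ for non-edges, bounds $\pr(E_{u,v})\leq P = d^4/\bigl((\al-d+1)(\be-d+1)\bigr)$, and observes that $E_{u,v}$ depends only on the labels of edges incident to $u$ or $v$, hence on at most $D=2n(d+1)$ other bad events. Although the expected number of bad events is unbounded, $PD\leq 1/4$ holds for $\al=\be=8d^{5/2}\sqrt{n}$ (this is the inequality $(8d^{5/2}n^{1/2}-d+1)^2\geq 8d^4(d+1)n$ verified in the paper), so with positive probability \emph{no} non-edge is defective and no alteration is needed at all. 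If you want to keep your proposal's structure, replacing the alteration step by this Local Lemma application is the missing ingredient; as written, the proof only establishes the bound when $n=O(d^{7})$.
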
 
\begin{proof}
Let $\A$ and $\B$ be two disjoint sets of features of the same cardinality
$\al = \be = 8d^{5/2}n^{1/2}$. Our goal is to show the existence of an
$(\al\mid \be)$-CIR of $\G$. 

We independently assign to
every edge $e$ of $\G$ a randomly chosen pair of features $\{a(e)\mid b(e)\}$, 
where $a(e) \in \A$ and $b(e) \in \B$. 
For each vertex $v \in \V$, let \vspace{-3pt}
\begin{equation}
\label{eq:Av}
A_v = \{a(e) \colon e=(u,v) \in \E\}, \vspace{-3pt}
\end{equation} 
\begin{equation} 
\label{eq:Bv} 
B_v = \{b(e) \colon e=(u,v) \in \E\}. \vspace{-3pt}
\end{equation} 
We aim to show that with a positive probability, the feature assignment
$\{(A_v\mid B_v) \colon v \in \V\}$ co-represents $\G$.  
Clearly, if $e=(u,v) \in \E$ then by \eqref{eq:Av} and \eqref{eq:Bv}, we have 
$a(e) \in A_u \cap A_v$ and $b(e) \in B_u \cap B_v$. 
Therefore, $A_u \cap A_v \neq \varnothing$ and $B_u \cap B_v \neq \varnothing$. 
In order for the cointersection Condition to be satisfied, we need to show that 
with a positive probability, for every $(u,v) \notin \E$, either $A_u \cap A_v = \varnothing$ or $B_u \cap B_v = \varnothing$. To this end, we make use of the Lov\'{a}sz Local 
Lemma~\cite{ErdosLovasz1975}. 

The classical Lov\'{a}sz Local Lemma may be stated as follows. Suppose that there are $m$ \emph{bad} events
$E_1,E_2,\ldots,E_m$, each occurring with probability at most $P$. Moreover, each event
is dependent on at most $D$ other events. If $PD \leq 1/4$ then 
\[
\pr(\cap_{i=1}^m \overline{E_i}) > 0.
\] 
In other words, with a positive probability, we can avoid all \emph{bad} events simultaneously. 

We define our set of \emph{bad} events as follows. 
For each $(u,v) \notin \E$, we let $E_{u,v}$ denote the event that 
$A_u \cap A_v \neq \varnothing$ and $B_u \cap B_v \neq \varnothing$. 
For each event $E_{u,v}$, we need to find an upper bounds on the probability that it happens and 
the number of other events that it may depend on.

First, we estimate the probability that each $E_{u,v}$ occurs. 
Since $\Delta(\G) \leq d$, each vertex $v \in \V$ is incident to at most $d$ edges. 
Therefore, by \eqref{eq:Av} and \eqref{eq:Bv}, $|A_v| \leq d$ and $|B_v| \leq d$, for every
$v \in \V$.
To obtain an upper bound on the probability that $A_u \cap A_v \neq \varnothing$, 
we may assume that $|A_u|$ and $|A_v|$ are as large as possible, i.e. $|A_u| = |A_v| = d$. 
Moreover, since $u$ and $v$ do not have any incident edges in common, their sets of $\A$-features
are independent. Therefore, we can treat $A_u$ and $A_v$ as two 
arbitrary subsets of $[\al]$ of sizes $d$. 
Then we have 
\[
\pr(A_u \cap A_v \neq \varnothing) \leq \dfrac{d\binom{\al}{d-1}}{\binom{\al}{d}}
= \dfrac{d^2}{\al-d+1}.  
\] 
Similarly,
\[
\pr(B_u \cap B_v \neq \varnothing) \leq \dfrac{d\binom{\al}{d-1}}{\binom{\al}{d}}
= \dfrac{d^2}{\be-d+1}.  
\] 
Thus, we deduce that for $(u,v) \notin \E$,
\begin{equation} 
\label{eq:P}
\begin{split}
\pr(E_{u,v})&=\pr(A_u \cap A_v \neq \varnothing)\times \pr(B_u \cap B_v \neq \varnothing)\\
&\leq P = \dfrac{d^4}{(\al-d+1)(\be-d+1)}.
\end{split} 
\end{equation} 

Second, we evaluate the number of other events that a certain event $E_{u,v}$ is dependent of. 
If $(u,v)\notin \E$ and $(w,x) \notin \E$ then the two events $E_{u,v}$ and
$E_{w,x}$ are dependent if and only if either there exist $z \in \{u,v\}$ and 
$z' \in \{w,x\}$ such that $(z,z') \in \E$ or $|\{u,v,w,x\}| \leq 3$.
For each $(u,v) \notin \E$, there are at most $2dn$ pairs $\{w,x\}$ that meet 
the first criteria and at most $2n$ pairs that meet the second. 
Therefore, each event $E_{u,v}$ is dependent of at most $D = 2n(d+1)$ other events. 

By Lov\'{a}s Local Lemma, it remains to prove that $PD \leq 1/4$.  
Recall that we assumed that $\al=\be=8d^{5/2}n^{1/2}$. 
Hence, we need to show that 
\begin{equation} 
\label{eq:1}
(8d^{5/2}n^{1/2}-d+1)^2 \geq 8d^4(d+1)n.
\end{equation} 
This claim may be established as follows:
\[
\begin{split}
&(8d^{5/2}n^{1/2}-d+1)^2 \geq (8d^{5/2}n^{1/2}-2\sqrt{2}d)^2\\
&= 8d^2(2\sqrt{2}d^{3/2}n^{1/2}-1)^2 = 8d^2(8d^3n - 4\sqrt{2}d^{3/2}n^{1/2} + 1)\\
&\geq 8d^2\big((d^3n + d^2n) + (7d^3n - d^2n - 4\sqrt{2}d^{3/2}n^{1/2})\big)\\
&= 8d^2\Big(d^2(d+1)n + \big((7d-1)d^{1/2}n^{1/2} - 4\sqrt{2}\big)d^{3/2}n^{1/2}\Big)\\
&> 8d^4(d+1)n.
\end{split}
\]
The last inequality is due to the fact that for $n \geq d \geq 1$, we have
$(7d-1)d^{1/2}n^{1/2} \geq 6 > 4\sqrt{2}$. 
This completes the proof.
\end{proof} 

For triangle-free $d$-regular graphs $\G$ on $n$ vertices, 
by Corollary~\ref{cr:lower_bound}, $\tcG \geq 2\sqrt{\toG} = \sqrt{2d}\sqrt{n}$. 
Therefore, in this case, the upper bound given by Theorem~\ref{thm:bounded_degree} is optimal up to a constant factor depending on $d$.
 
Recall that $\ttG$ denotes the $2$-intersection number of $\G$. 
As already pointed out, Eaton {\et}~\cite{EatonGouldRodl1996} showed that $\ttG \leq 1+\toG$ for a general graph and $\ttG \leq 3epd^2(d+1)^{1/2}\sqrt{n}$ for a graph of bounded degree $d$. 
The former bound is the same as the upper bound for $\tcG$ in Lemma~\ref{lem:upper_bound} 
and the latter is essentially the same as the upper bound for $\tcG$ in 
Theorem~\ref{thm:bounded_degree}. However, $\tcG$ and $\ttG$ can be vastly different
for certain families of graphs. For instance, we establish in Proposition~\ref{pr:Knn} 
{in Section~\ref{sec:tightness}}
that for a complete balanced bipartite graph with edge set $\V$, while $\tcG = |\V|$, 
$\ttG$ is quadratic in $|\V|$ (see Chung and West~\cite{ChungWest1994} for the latter claim).   

Next, we show that the cointersection number of a bipartite graph is at most 
its order. Since the intersection representation of a bipartite graph is equal to 
its size, the bound stated in Lemma~\ref{lem:upper_bound_bipartite}
improves the bound stated in Lemma~\ref{lem:upper_bound} when the graph
has more edges than vertices.  

\begin{lemma}
\label{lem:upper_bound_bipartite}
$\tcG \leq |\V|$ if $\G = (\V,\E)$ is a bipartite graph.
\end{lemma}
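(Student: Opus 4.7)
The plan is to give an explicit construction of a cointersection representation using exactly $|\V|$ features. Let $\V = \V_1 \cup \V_2$ be the bipartition of $\G$. I will use one $\A$-feature for each vertex of $\V_1$ and one $\B$-feature for each vertex of $\V_2$, so that $|\A|+|\B| = |\V_1|+|\V_2| = |\V|$.

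Concretely, let $\A = \{a_v : v \in \V_1\}$ and $\B = \{b_u : u \in \V_2\}$, two disjoint sets of features. For each $v \in \V_1$, set $A_v = \{a_v\}$ and $B_v = \{b_u : u \in \V_2,\ (v,u)\in \E\}$. Symmetrically, for each $u \in \V_2$, set $B_u = \{b_u\}$ and $A_u = \{a_v : v \in \V_1,\ (v,u) \in \E\}$. So each vertex keeps its own ``name'' as a feature on its native side, and accumulates the names of its neighbors as features on the opposite side.

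It remains to verify the cointersection Condition, which I would split into three cases. First, if $v \in \V_1$ and $u \in \V_2$ with $(v,u) \in \E$, then $a_v \in A_v \cap A_u$ and $b_u \in B_v \cap B_u$, so both intersections are nonempty. If instead $(v,u) \notin \E$, then $A_v = \{a_v\}$ and $v \notin N(u)$, so $a_v \notin A_u$, giving $A_v \cap A_u = \varnothing$. Second, for two vertices $v,v' \in \V_1$ (non-adjacent since $\G$ is bipartite), $A_v \cap A_{v'} = \{a_v\} \cap \{a_{v'}\} = \varnothing$. Third, symmetrically, for $u, u' \in \V_2$, $B_u \cap B_{u'} = \varnothing$. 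In each non-edge case at least one of the two intersections is empty, and in each edge case both are nonempty, which is exactly the cointersection Condition.

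There is no real obstacle here: the construction is self-dual across the bipartition, and the only thing one must be careful about is that the ``opposite-side'' feature sets (the $B_v$ for $v \in \V_1$ and the $A_u$ for $u \in \V_2$) are indexed by neighbors, so they can only create spurious cross-side intersections at actual edges. The fact that the graph is bipartite is used twice: to assert that no two vertices on the same side need to be made adjacent (which is automatic from the singleton native-side features being distinct), and to ensure that the native-side features on the two sides live in disjoint feature sets $\A$ and $\B$.
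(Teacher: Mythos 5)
Your construction is exactly the one the paper uses: each vertex keeps a singleton "name" feature on its own side of the bipartition and collects its neighbors' names on the opposite side, giving an $(|\V_1|,|\V_2|)$-CIR with $|\V|$ features in total. The verification you spell out is the step the paper dismisses as straightforward, so the proposal is correct and essentially identical to the paper's proof.
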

\begin{proof} 
As $\G$ is a bipartite graph, we can partition the set of vertices into 
two parts, say $U = \{1,2,\ldots,n_1\}$ and $V = \{n_1+1,n_1+2,\ldots,n\}$, 
for some $1 \leq n_1 < n$, so that $\E \subseteq \{(u,v) \colon u \in U, 
v \in V\}$. Set $\A = \{a_u\colon u \in U\}$ and $\B = \{b_v \colon v \in V\}$. We assign to each $u \in U$ 
two sets of features, namely $A_u = \{a_u\}$ and 
$B_u = \{b_v \colon (u,v) \in \E\}$.
Similarly, we assign to each $v \in V$ two sets of features, namely
$A_v = \{a_u \colon (u,v) \in \E\}$ and $B_v = \{b_v\}$. 
Then it is straightforward to verify that $\cR = \{(A_v,B_v) \colon v \in \V\}$
is an $(n_1,n-n_1)$-CIR of $\G$. As this cointersection representation  
uses $n$ features in total, the proof follows.  
\end{proof} 


We prove next a lower bound on $\ttc$ via $\tto$. 

\begin{lemma}
\label{lem:lower_bound}
If $\cR$ is an $(\al\mid \be)$-CIR of $\G$ then $\al\be \geq \tto(\G)$. As a consequence, 
$\tcG \geq \min_{\al\be \geq \toG} (\al+\be)$.
\end{lemma}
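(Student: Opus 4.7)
The plan is to turn any $(\al\mid\be)$-CIR into an edge clique cover of $\G$ of size at most $\al\be$, which by definition gives an intersection representation of $\G$ using $\al\be$ features and hence forces $\toG\le \al\be$.

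Concretely, for each pair $(a,b)\in \A\times\B$ I would define
\[
C_{a,b}=\{v\in\V \colon a\in A_v \text{ and } b\in B_v\}.
\]
The first step is to check that each $C_{a,b}$ is a clique in $\G$: if $u,v\in C_{a,b}$, then $a\in A_u\cap A_v$ and $b\in B_u\cap B_v$, so both intersections are nonempty, and the ``$\Leftarrow$'' direction of the cointersection condition yields $(u,v)\in\E$. The second step is to verify that the family $\{C_{a,b}\}_{(a,b)\in\A\times\B}$ covers every edge: if $(u,v)\in\E$, then by the ``$\Rightarrow$'' direction there exist $a\in A_u\cap A_v$ and $b\in B_u\cap B_v$, and then $u,v\in C_{a,b}$. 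Thus the $\al\be$ cliques $\{C_{a,b}\}$ form an edge clique cover of $\G$, so $\toG\le \al\be$, establishing the first claim.

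For the consequence, let $\cR^*$ be an optimal CIR achieving $\tcG = \al^*+\be^*$. By the claim just proved, $\al^*\be^*\ge \toG$, so the pair $(\al^*,\be^*)$ is admissible in the minimization $\min_{\al\be\ge \toG}(\al+\be)$, giving $\tcG=\al^*+\be^*\ge \min_{\al\be\ge\toG}(\al+\be)$.

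There is no real obstacle here; the only thing to be careful about is using both directions of the cointersection condition — one to guarantee that each $C_{a,b}$ is a clique, the other to guarantee coverage of the edges. Everything else is bookkeeping.
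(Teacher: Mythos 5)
Your proposal is correct and follows essentially the same route as the paper: the paper also defines the cliques $\C_{a,b}=\{v : a\in A_v,\, b\in B_v\}$, observes they form an edge clique cover of size $\al\be$, and concludes $\al\be\ge\toG$ and hence the stated bound on $\tcG$. Your write-up merely spells out in more detail the two uses of the cointersection condition (clique property and edge coverage), which the paper treats as obvious.
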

\begin{proof}
Suppose we have a cointersection representation $\cR = \{(A_v \mid B_v): v \in \V\}$  
of $\G$ with two disjoint sets of features $\A$ and $\B$, where $|\A| = \al$ and $|\B| =\be$. 
For each pair $(a,b) \in \A \times \B$, the set of vertices
$\C_{a,b} = \{v \in V: a \in A_v, b \in B_v\}$
forms a clique of $\G$. Moreover, it is obvious that any edge of $\G$
must be covered by one such clique. Therefore, 
$\cC = \{\C_{a,b}: (a,b) \in \A\times \B\}$
is an edge clique cover of $\G$. As $\toG$ is the number of cliques
in a minimum edge clique cover of $\G$, we have \vspace{-3pt}
\[
\al\be = \AB = |\cC| \geq \toG. \vspace{-3pt}
\]
Therefore,
$\tcG \geq \min_{\al\be \geq \toG} (\al + \be)$. \qedhere 
\end{proof}

The following is immediate from Lemma~\ref{lem:upper_bound}
and Lemma~\ref{lem:lower_bound}. 

\begin{corollary}
\label{cr:lower_bound} 
For any graph $\G$ we have 
\begin{equation} 
\label{eq:sandwiched}
\lceil2\sqrt{\toG}\rceil \leq \tcG \leq 1+\toG. \vspace{-3pt}
\end{equation}  
\end{corollary}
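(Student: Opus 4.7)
The plan is to derive both bounds directly from the two lemmas just proved. The right-hand inequality $\tcG \leq 1 + \toG$ is literally the statement of Lemma~\ref{lem:upper_bound}, so nothing more needs to be said there.

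For the left-hand inequality, I would start from an optimal cointersection representation $\cR$ of $\G$, say of type $(\al \mid \be)$, so that $\tcG = \al + \be$. Lemma~\ref{lem:lower_bound} applied to this $\cR$ yields the product inequality $\al\be \geq \toG$. Then the standard AM--GM inequality $\al+\be \geq 2\sqrt{\al\be}$ gives
\[
\tcG = \al + \be \geq 2\sqrt{\al\be} \geq 2\sqrt{\toG}.
\]
Finally, since $\tcG$ is a nonnegative integer, the bound can be tightened to $\tcG \geq \lceil 2\sqrt{\toG}\rceil$, which is what the corollary claims.

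There is really no hard step here; the only thing to be a little careful about is the logical order. Lemma~\ref{lem:lower_bound} is phrased as $\tcG \geq \min_{\al\be \geq \toG}(\al+\be)$, which together with AM--GM immediately gives the stated bound, but invoking it on a specific optimal pair $(\al,\be)$ (as above) makes the chain of inequalities completely transparent and avoids having to separately argue that the minimum on the right is at least $2\sqrt{\toG}$. The ceiling appears solely because $\tcG \in \mathbb{Z}_{\geq 0}$ and both $\al$ and $\be$ are integers.
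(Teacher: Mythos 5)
Your proposal is correct and matches the paper's intent exactly: the paper simply declares the corollary ``immediate'' from Lemma~\ref{lem:upper_bound} and Lemma~\ref{lem:lower_bound}, and your argument (upper bound from the first lemma, $\al\be \geq \toG$ from the second applied to an optimal representation, AM--GM, then the ceiling from integrality of $\tcG$) is precisely the chain of reasoning being left implicit.
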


Note again that both $\ttc$ and $\ttt$ (the $2$-intersection number) have quite similar lower bounds in terms of $\tto$. 
Indeed, based on the aforementioned bound $\binom{\ttG}{2} \geq \tto(\G)$, one arrives at $\ttG \geq \sqrt{2\tto(\G)}$. 
Corollary~\ref{cr:lower_bound} gives us 
$\tcG \geq 2\sqrt{\tto(\G)}$. The two lower bounds for $\ttt$ and $\ttc$
differ from each other only by a multiplicative factor of $\sqrt{2}$. 

\section{Tightness of the Bounds}
\label{sec:tightness}

We discuss next the tightness of the bounds on $\tcG$ for several families of graphs. In addition, we link the existence of
cointersection representations of certain complete multipartite graphs that achieve the lower bound with the existence of specific 
affine planes.
 
The first result shows that for graphs with very small $\tto$, the upper bound $\tcG \leq 1 + \toG$
is actually tight. \vspace{-3pt}
\begin{proposition} 
The upper bound $\tcG \leq 1 + \toG$ stated in Lemma~\ref{lem:upper_bound}
is tight when $\toG \leq 3$. \vspace{-3pt}
\end{proposition}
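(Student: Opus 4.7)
The plan is to observe that this proposition is really a corollary of the two-sided bound established in Corollary~\ref{cr:lower_bound}, namely $\lceil 2\sqrt{\toG}\,\rceil \leq \tcG \leq 1 + \toG$. Tightness of the upper bound when $\toG \leq 3$ will follow by simply checking that, for each small value of $\toG$, the lower bound $\lceil 2\sqrt{\toG}\,\rceil$ already equals the upper bound $1 + \toG$, forcing equality throughout.

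Concretely, I would proceed as follows. First, invoke Lemma~\ref{lem:upper_bound} and Corollary~\ref{cr:lower_bound} to sandwich $\tcG$ between $\lceil 2\sqrt{\toG}\,\rceil$ and $1+\toG$. Then, I would evaluate the floor/ceiling expressions at $\toG \in \{1,2,3\}$:
\[
\lceil 2\sqrt{1}\,\rceil = 2, \qquad \lceil 2\sqrt{2}\,\rceil = 3, \qquad \lceil 2\sqrt{3}\,\rceil = 4,
\]
which in each case matches $1 + \toG$. Hence for any graph $\G$ with $\toG \in \{1,2,3\}$, the lower bound coincides with the upper bound, yielding $\tcG = 1 + \toG$ and showing the upper bound of Lemma~\ref{lem:upper_bound} is attained by every such graph.

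There is no real obstacle here: the only thing to verify is the numerical fact that $\lceil 2\sqrt{k}\,\rceil = k+1$ for $k \in \{1,2,3\}$, which is immediate. The argument breaks down starting at $\toG = 4$, where $\lceil 2\sqrt{4}\,\rceil = 4 < 5 = 1 + \toG$, so the proposition's restriction to $\toG \leq 3$ is sharp in the sense that the two bounds of Corollary~\ref{cr:lower_bound} first cease to agree at $\toG = 4$; this would naturally motivate the subsequent discussion in Section~\ref{sec:tightness} of graph families for which the gap between the lower and upper bounds becomes nontrivial.
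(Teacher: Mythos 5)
Your proposal is correct and is essentially the paper's own argument: the paper likewise observes that for $\toG \leq 3$ the two sides of the sandwich $\lceil 2\sqrt{\toG}\rceil \leq \tcG \leq 1+\toG$ coincide, forcing equality. You merely make explicit the numerical checks $\lceil 2\sqrt{k}\rceil = k+1$ for $k \in \{1,2,3\}$, which the paper leaves as obvious.
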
 
\begin{proof} 
It is obvious that when $\toG \leq 3$, the left-hand side and the right-hand
side of \eqref{eq:sandwiched} are coincide. 
\end{proof} 
Next, we demonstrate that for some simple graphs, the lower bound $\al\be \geq \tto(\G)$
established in Lemma~\ref{lem:lower_bound}
is also sufficient for the existence of an $(\al \mid \be)$-CIR. 
As $\tto$ is known for these graphs, $\ttc$ can be determined explicitly. \vspace{-3pt}
\begin{proposition}
\label{pr:simple_graphs}
If $\al\be \geq \tto(\G)$ then there exists an $(\al \mid \be)$-CIR of $\G$
when $\G$ is a star $\S_n$, a path $\P_n$, or a cycle $\C_n$.
\end{proposition}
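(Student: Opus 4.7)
The plan is to construct, in each of the three cases, an explicit $(\al\mid\be)$-CIR whenever $\al\be\geq\tto(\G)$, thereby showing that the lower bound of Lemma~\ref{lem:lower_bound} is attained. The unifying observation I would exploit is that a CIR is equivalent to a \emph{rectangle intersection representation}: setting $M_v=A_v\times B_v\subseteq\A\times\B$, the cointersection condition becomes $(u,v)\in\E\Llra M_u\cap M_v\neq\varnothing$. So it suffices to assign each vertex a rectangular subset of the $\al\times\be$ grid $\A\times\B$ such that the rectangles intersect exactly along the edges of $\G$.

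For the star $\S_n$, where $\tto(\G)=n$, I would pick $n$ distinct cells $(a_i,b_i)\in\A\times\B$, label the edge from the center $c$ to the leaf $\ell_i$ by the $i$-th cell, and set $A_c=\{a_1,\ldots,a_n\}$, $B_c=\{b_1,\ldots,b_n\}$, $A_{\ell_i}=\{a_i\}$, $B_{\ell_i}=\{b_i\}$. Adjacency of $c$ with each leaf is witnessed by $(a_i,b_i)$, while two distinct leaves disagree in at least one of the two coordinates by distinctness of the chosen cells.

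For the path $\P_n$, where $\tto(\G)=n-1$, I would label the edges $e_1,\ldots,e_{n-1}$ by the first $n-1$ cells $c_i=(a_i,b_i)$ of a \emph{boustrophedon} listing of the grid (rows traversed alternately left-to-right and right-to-left). The key property of this listing is that any two consecutive cells $c_i,c_{i+1}$ share a coordinate -- the same row within a row, or the same column at a row-boundary. Setting $A_{v_i}=\{a_{i-1},a_i\}$ and $B_{v_i}=\{b_{i-1},b_i\}$ (with the boundary convention that out-of-range indices are dropped at $v_1$ and $v_n$), the shared coordinate forces one of $A_{v_i},B_{v_i}$ to collapse to a singleton, so $M_{v_i}=\{c_{i-1},c_i\}$ at interior $i$ and a singleton at the endpoints. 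Distinctness of the $c_k$ then gives $M_{v_i}\cap M_{v_j}\neq\varnothing$ iff $|i-j|\leq 1$, matching the adjacency in $\P_n$ exactly.

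For the cycle $\C_n$, the triangle case $n=3$ is trivial since $\tto(\C_3)=1$. For $n\geq 4$ one has $\tto(\G)=n$, and I would build a closed analogue of the path construction, namely a \emph{rook cycle} of length $n$: $n$ distinct cells $c_1,\ldots,c_n$ with every consecutive pair, indices mod $n$, sharing a row or column. When $\min(\al,\be)=1$ all cells lie in a single row (or column), so any cyclic arrangement of $n$ distinct cells works. When $\al,\be\geq 2$ and $\al\be\geq n$, the cycle is built by snaking through the rows and closing the loop with one column move. Once the rook cycle is in hand, the verification that the induced feature assignment is a valid CIR is verbatim the path argument. The main technical obstacle lies in this last step: the open-path snake does not automatically close, so producing the required closure demands a short case analysis on $n$ modulo $\be$ and on the parity of the number of rows used, with a slight modification of the last one or two cells so that the loop returns by a row- or column-move to $c_1$.
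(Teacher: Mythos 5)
Your construction is essentially the paper's own proof: assign distinct feature pairs (cells of $\A\times\B$) to the edges along a boustrophedon/snake listing so that consecutive edges share a coordinate, give each vertex the union of its incident edges' features, and observe that the product $A_v\times B_v$ then collapses to exactly the incident cells, which is precisely how the paper handles stars, paths, and cycles. The only difference is that for cycles the paper explicitly carries out the closure step you defer (when the number of $\A$-groups is odd, it reorders the $\B$-features in the last two groups so the snake returns to the starting cell by a shared coordinate), so your anticipated ``short case analysis'' is exactly the modification that is needed and it does go through.
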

\begin{proof}
Suppose that $\G \equiv \S_n$ is a star graph on $n$ vertices. Let $\A$ and $\B$ be two disjoint subsets of features of sizes $\al$ and $\be$, respectively. 
First, suppose that $\S_n$ has edges $(1,2),(1,3),\ldots, (1,n)$. 
Since $|\A||\B|\geq n-1=\tto(\S_n)$, we can assign distinct pairs $(a,b) \in \A\times \B$
to the edges of $\S_n$. For each vertex $v \in \{2,\ldots, n\}$, let
$A_v = \{a_{1,v}\}$, $B_v = \{b_{1,v}\}$, where $\{a_{1,v}\mid b_{1,v}\}$ are the features
assigned to the edge $(1,v)$. Also, let $A_1 = \A$ and $B_1 = \B$. It is clear that this is an $(\al \mid \be)$-CIR of $\S_n$. 

Next, suppose that $\G \equiv \P_n$ is a path on $n$ vertices and that 
it has edges $(v,v+1)$, $1\leq v < n$. 
Recall that $\tto(\P_n) = n - 1$. 
To simplify the notation, we assume that $\al\be = \tto(\P_n)=n-1$. The case when we have
strict inequality can be proved in the same manner. Furthermore, let $\A = \{a_1,\ldots,a_\al\}$, and $\B = \{b_1,\ldots,b_\be\}$.

We describe next an $(\al \mid \be)$-CIR of $\P_n$. We first split $n-1$ edges
of $\P_n$ into $\al$ equal-sized groups, each consisting of precisely $\be$ consecutive
edges. We then assign $\{a_1\mid b_1\},\{a_1\mid b_2\},\ldots,\{a_1\mid b_\be\}$
as features to the first group of $\be$ edges in that order. For the next group of $\be$ edges, 
we assign the sequence of features $\{a_2\mid b_\be\},\{a_2\mid b_{\be-1}\},\ldots,\{a_2\mid b_1\}$. 
For the third group of $\be$ edges, we use the sequence
$\{a_3\mid b_1\},\{a_3\mid b_2\},\ldots,\{a_3\mid b_\be\}$. 
Note that we used an \emph{increasing} order for the indices of the sequence $b_j$ in the first group, and a
 \emph{decreasing} order for the second group, and again an \emph{increasing} order for the
third group. We continue to assign features in this way until reaching the last group of edges.
We illustrate this feature assignment for the edges of $\P_{13}$ in the figure below.
Here, we set $\A = \{1,2,3\}$ and $\B = \{4,5,6,7\}$.  
\vspace{-15pt}
\begin{figure}[H]
\centering
\includegraphics[scale=0.7]{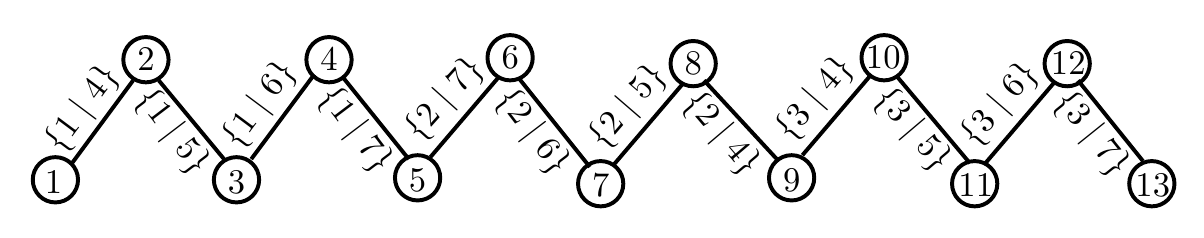}
\end{figure}
\vspace{-15pt}
We use $\{a(e)\, | \, b(e)\}$ to denote the pair of features assigned to an edge $e$. 
Then we assign to each vertex $v \in \P_n$ two feature sets $A_v = \{a(e) \colon e \text{ is 
incident to } v\}$ and $B_v = \{b(e) \colon e \text{ is 
incident to } v\}$. 
For example, the features of the vertices of $\P_{13}$ are given in the figure below. \vspace{-10pt}
\begin{figure}[H]
\centering
\includegraphics[scale=0.7]{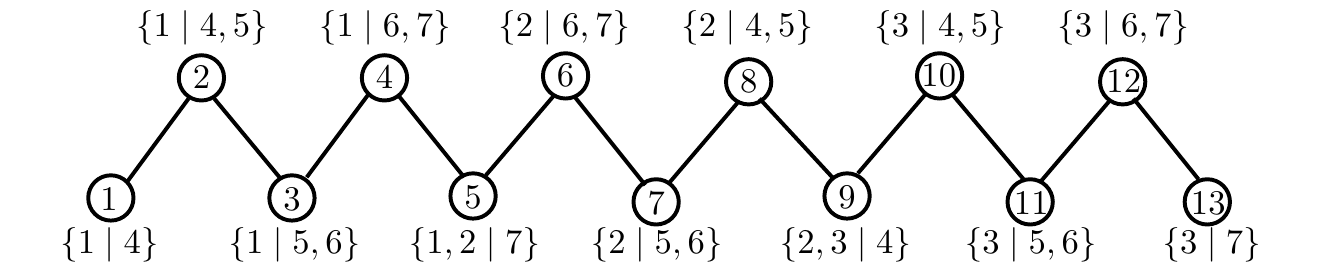}
\end{figure}
\vspace{-10pt}
We can verify that this is an $(\al \mid \be)$-CIR of $\P_n$.
Due to the way we assign features to the vertices, each vertex has precisely the feature pairs $\{a,b\}$, where $a \in \A$ and $b \in \B$ assigned to the edges incident to that vertex. 
Moreover, different edges are assigned different feature pairs. 
Consequently, two distinct vertices share a common feature pair only if they share a common edge.

\vspace{-10pt}
\begin{figure}[H]
\centering
\includegraphics[scale=0.7]{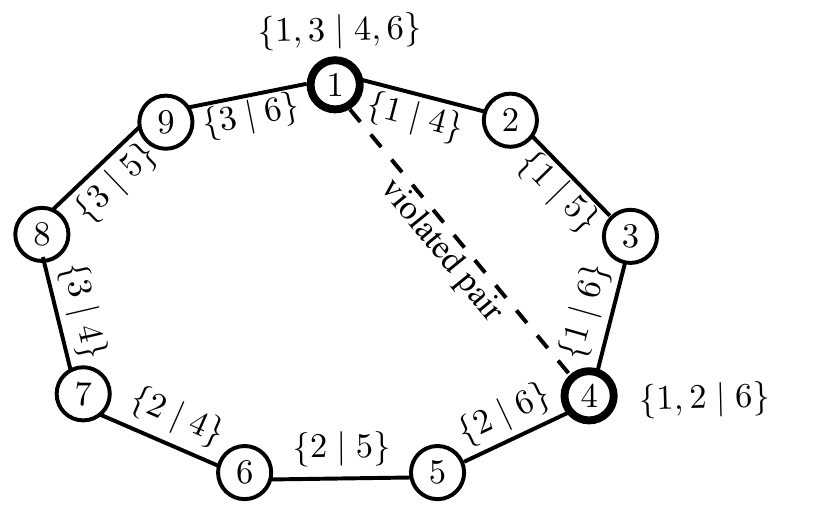}
\caption{An example where the discussed feature assignment for paths does not apply for the case of a cycle, say $\C_9$. 
Two vertices $1$ and $4$ share a pair of common features
$\{1 \mid 6\}$, even though they are not adjacent. Here we set $\A = \{1,2,3\}$ and
$\B = \{4,5,6\}$.}    
\label{fig:C9_bad} 
\end{figure}
The proof for cycles proceeds along the same lines as the proof for paths, except for one added modification. 
Recall that $\toG = n$ if $\G \equiv \C_n$ is a cycle on $n$ vertices. 
Suppose that $\al\be = n$ (the case $\al\be > n$ can be dealt with in the same manner). 
We split the $n$ edges of $\C_n$ into $\alpha$ equal-sized groups, each consisting of $\be$
consecutive edges. As demonstrated for paths, the key idea is to assign features to edges so that different edges receive different pairs of features and moreover, the set of the feature pairs each vertex has consists precisely of the feature pairs assigned to its two adjacent edges. 
When $\alpha$ is even, we assign features to $\alpha$ groups of edges of $\C_n$ 
and then deduce the set of features assigned to each vertex in the same way we do for paths. 
When $\alpha$ is odd, this feature assignment may no longer work, because now the vertex $1$ of the cycle would be assigned two sets of features $\A_1 = \{a_1, a_\al \}$ and 
$\B_1 = \{b_1, b_\beta\}$; as a result, it would have four instead of two feature pairs, namely $\{a_1\mid b_1\}$, 
$\{a_1\mid b_\be\}$, $\{a_\al\mid b_1\}$, $\{a_\al\mid b_\be\}$. 
As a consequence, this vertex may share a common pair of features with some other vertices
that are not adjacent to it. For instance, for $n = 9 = 3 \times 3$, the currently discussed feature assignment for $\C_9$, demonstrated in Fig.~\ref{fig:C9_bad}, violates the 
cointersection Condition. 
\vspace{-10pt}
\begin{figure}[htb]
\centering
\includegraphics[scale=0.7]{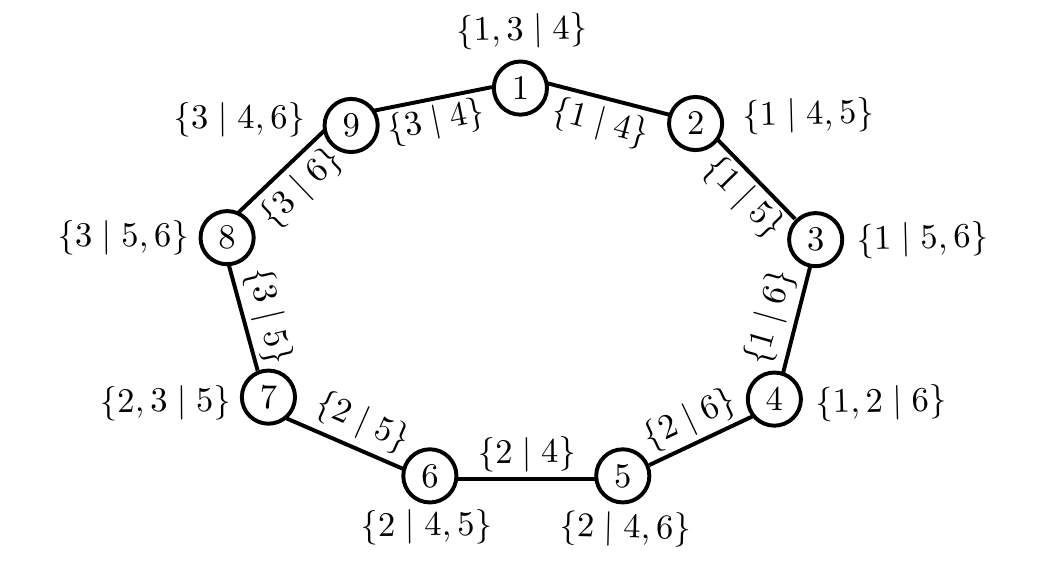}
\caption{An example of a $(3 \mid 3)$-cointersection representation of $\C_9$.
Here we set $\A = \{1,2,3\}$ and $\B = \{4,5,6\}$.}  
\label{fig:C9_good} 
\end{figure}

We correct this issue as follows.
Suppose that $\al \geq 3$ (the case $\al = 1$ and $\be = n$ is trivial, due to Lemma~\ref{lem:upper_bound}). We assign features to the first $\al-2$ groups of edges of $\C_n$
in the same way as for paths. For the $(\al-1)$th group, instead of assigning 
$\{a_{\al-1} \mid b_\be\}, \ldots, \{a_{\al-1} \mid b_1\}$, 
we assign $\{a_{\al-1} \mid b_\be\}, \ldots, \{a_{\al-1} \mid b_3\}, \{a_{\al-1} \mid b_1\},
\{a_{\al-1} \mid b_2\}$ to the edges in this order. 
For the $\al$th group, instead of assigning $\{a_{\al} \mid b_1\}, \ldots, \{a_{\al} \mid b_\be\}$, we assign $\{a_{\al} \mid b_2\}, \{a_{\al} \mid b_3\},\ldots, \{a_{\al} \mid b_\be\},
\{a_{\al} \mid b_1\}$ to the edges. In this way, we guarantee that the vertex $1$
is also assigned two feature pairs as the others, and hence, two vertices share a common
feature pair if and only if they are adjacent to the same edge. 
We illustrate this feature assignment in Fig.~\ref{fig:C9_good}. 
\end{proof} 

\begin{corollary}
\label{cr:simple_graphs}
If $\G$ is a star, a path, or a cycle, then
$\lceil 2\sqrt{\toG}\rceil \leq \tcG \leq 2\lceil\sqrt{\toG}\rceil$.  
\end{corollary}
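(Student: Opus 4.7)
The plan is to combine the earlier results already established. The lower bound $\lceil 2\sqrt{\toG}\rceil \leq \tcG$ is not specific to stars, paths, or cycles; it is exactly the left-hand inequality in Corollary~\ref{cr:lower_bound}, which holds for every graph. So nothing new needs to be done on that side.

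For the upper bound, the strategy is to pick a balanced choice of $\al$ and $\be$ and invoke Proposition~\ref{pr:simple_graphs}. Specifically, I would set $\al = \be = \lceil\sqrt{\toG}\rceil$. Then
\[
\al\be = \lceil\sqrt{\toG}\rceil^{\,2} \geq \toG,
\]
so the sufficient condition of Proposition~\ref{pr:simple_graphs} is satisfied, and there exists an $(\al\mid\be)$-CIR of $\G$. By definition, this gives $\tcG \leq \al + \be = 2\lceil\sqrt{\toG}\rceil$, which is the desired upper bound.

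There is essentially no obstacle here; the corollary is really just the statement that when one can freely choose $\al$ and $\be$ subject only to $\al\be \geq \toG$, the symmetric choice is (up to the ceiling) optimal, and this is exactly the regime that Proposition~\ref{pr:simple_graphs} provides for stars, paths, and cycles. The only thing to double-check is that the proposition indeed applies for every such pair $(\al,\be)$ with $\al\be \geq \toG$, including the case $\al\be > \toG$ where the construction in that proof is said to extend ``in the same manner''; since the statement of the proposition is already phrased for arbitrary $\al,\be$ with $\al\be\ge\toG$, we may simply cite it directly.
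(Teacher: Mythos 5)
Your proposal is correct and matches the paper's own argument: the lower bound is cited from Corollary~\ref{cr:lower_bound}, and the upper bound comes from applying Proposition~\ref{pr:simple_graphs} with the balanced choice $\al=\be=\lceil\sqrt{\toG}\rceil$, since $\al\be\geq\toG$. No gaps; your extra remark about the ``same manner'' extension to $\al\be>\toG$ is exactly the point the proposition's statement already covers.
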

\begin{proof}
By Corollary~\ref{cr:lower_bound}, we have $\tcG \geq \lceil 2\sqrt{\toG}\rceil$. 
Moreover, by Proposition~\ref{pr:simple_graphs}, if $\G$ is a star, a path, 
or a cycle, then there exists a
$(\lceil\sqrt{\toG}\rceil \mid \lceil\sqrt{\toG}\rceil)$-CIR of $\G$, 
which uses $2\lceil\sqrt{\toG}\rceil$ features in total. Hence, 
$\lceil 2\sqrt{\toG}\rceil \leq \tcG \leq 2\lceil\sqrt{\toG}\rceil$, 
which establishes our assertion for stars, paths, and cycles.
\end{proof}

Similar results also hold for complete multipartite graphs $\K_{n,\ldots,n}$ with certain parameters, 
as shown in the subsequent results. 
Note that for a complete bipartite graph $\K_{n,n}$, we have
$\tto(\K_{n,n}) = n^2$, which is precisely the number of edges. 
We henceforth denote the set $\{1,2,\ldots, m\}$ by $[m]$. 

\begin{proposition}
\label{pr:Knn}
If $n = ts$ then a $(t,ts^2)$-CIR exists for $\K_{n,n}$. As a consequence, 
$\ttc(\K_{n,n}) = 2n = 2\sqrt{\tto(\K_{n,n})}$. 
\end{proposition}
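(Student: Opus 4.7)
The statement splits into two parts: the construction of a $(t, ts^2)$-CIR for $\K_{n,n}$ whenever $n = ts$, and the evaluation $\ttc(\K_{n,n}) = 2n$ as a corollary. For the lower bound I would invoke Corollary~\ref{cr:lower_bound} together with the elementary observation that $\tto(\K_{n,n}) = n^2$ (since $\K_{n,n}$ is triangle-free, its only cliques are individual edges), which yields $\ttc(\K_{n,n}) \geq 2\sqrt{n^2} = 2n$. All of the work therefore lies in producing a matching upper bound via an explicit construction.

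For the CIR I would index both parts of $\K_{n,n}$ by $[t] \times [s]$, writing $U = \{u_{i,\ell}\}$ and $V = \{v_{j,k}\}$. Take $\A = [t]$ and, crucially, $\B = [t] \times [s] \times [s]$, so $|\A| = t$ and $|\B| = ts^2$. Assign
\[
A_{u_{i,\ell}} = \{i\}, \qquad B_{u_{i,\ell}} = \{(j,k,\ell) : j \in [t],\, k \in [s]\},
\]
\[
A_{v_{j,k}} = \A, \qquad B_{v_{j,k}} = \{(j,k,\ell) : \ell \in [s]\}.
\]
Across parts, the $A$-sets intersect (the $V$-side holds all of $\A$) and the $B$-sets meet in the single element $(j,k,\ell)$, so $u_{i,\ell}$ and $v_{j,k}$ are always adjacent. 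Within $U$, either $i \neq i'$ and the $A$-sets are disjoint, or $i = i'$ and $\ell \neq \ell'$ so the $B$-sets are disjoint (the third coordinate is locked). Within $V$, the $A$-sets are both $\A$ but the $B$-sets are pinned to disjoint $(j,k)$-blocks. Verifying non-adjacency in each case is mechanical.

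Specializing to $t = n,\ s = 1$ yields an $(n, n)$-CIR of $\K_{n,n}$, so $\ttc(\K_{n,n}) \leq 2n$; combined with the lower bound, $\ttc(\K_{n,n}) = 2n = 2\sqrt{\tto(\K_{n,n})}$. The main conceptual hurdle is deciding how to structure $\B$: the $ts^2$ features must simultaneously separate distinct $V$-vertices and separate those $U$-vertices that collide on their $\A$-feature. The three-coordinate product structure handles these two requirements with one coordinate each (the first two for $V$, the third for $U$), and once this design is in place the remaining verification is routine.
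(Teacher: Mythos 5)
Your proposal is correct and is essentially the paper's construction in different notation: your product set $\B = [t]\times[s]\times[s]$ with third-coordinate slices for $U$-vertices and $(j,k)$-slices for $V$-vertices is exactly the paper's $s\times(ts)$ matrix arrangement of $\B$, with rows $R_{i_b}$ assigned on one side and columns $C_j$ on the other, and the lower bound via Corollary~\ref{cr:lower_bound} and $\tto(\K_{n,n})=n^2$ is the same. No gaps.
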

\begin{proof} 
The explanation that the second assertion follows from the first assertion is 
as follows. Let $t = n$ and $s = 1$. Then an $(n,n)$-CIR 
of $\K_{n,n}$ exists which uses exactly $2n$ features. Combining this result with Corollary~\ref{cr:lower_bound}, we have
\[
2n = 2\sqrt{\tto(\K_{n,n})} \leq \ttc(\K_{n,n}) \leq 2n,
\] 
which implies that 
\[
\ttc(\K_{n,n}) = 2n = 2\sqrt{\tto(\K_{n,n})}. 
\]
Note that this equality may also be deduced by combining Corollary~\ref{cr:lower_bound}
and Lemma~\ref{lem:upper_bound_bipartite}. 

We now prove the first assertion of the proposition. 
Let $\A = \{a_1,\ldots,a_t\}$ and $\B = \{b_1,\ldots,b_{ts^2}\}$. 
Let $R_1,\ldots,R_s$ be disjoint subsets of size $ts$ of $\B$ that partition $\B$.
Moreover, let $C_1,\ldots,C_{ts}$ be disjoint subsets of size $s$ of $\B$ that 
partition $\B$. In addition, let $|R_i \cap C_j| = 1$ for every $i \in [s]$ and 
$j \in [ts]$. For instance, if we arrange the $ts^2$ elements of $\B$ in a $s\times(ts)$
matrix, then we can simply let $R_i$ be the set of $ts$ elements in the $i$th row
and let $C_j$ be the set of $s$ elements in the $j$th column. 

We assign feature sets to each vertex in $\K_{n,n}$ as follows. 
Suppose that $\V(\K_{n,n}) = \{1,\ldots,n\} \cup \{n+1,\ldots,2n\}$, and let 
$\E(\K_{n,n}) = \{(i,j) \colon 1 \leq i \leq n, n+1 \leq j \leq 2n\}$. 
First, for a vertex $i \in \{1,\ldots,n\}$, we write $i = (i_a-1) s + i_b-1$, where
$1 \leq i_a \leq t$ and $1 \leq i_b \leq s$.
Then we assign $A_i = \{a_{i_a}\}$ and $B_i = R_{i_b}$. 
For a vertex $i \in \{n+1,\ldots,2n\}$, we assign $A_i = \A = \{a_1,\ldots,a_t\}$ and 
$B_i = C_i$. Recall that $n = ts$, which is precisely the number of sets
$C_j$'s that we have. For example, when $n = 6$, $t = 2$, and $s = 3$, 
then the sets $R_i$ and $C_j$ consist of elements in the correspondingly indexed 
rows and columns, respectively, of the matrix given below.  
\vspace{-5pt}
\begin{figure}[H]
\centering
\begin{tabular}{ r|c|c|c|c|c|c| }
\multicolumn{1}{r}{}
 & \multicolumn{1}{c}{$C_1$} & \multicolumn{1}{c}{$C_2$} 
 & \multicolumn{1}{c}{$C_3$} & \multicolumn{1}{c}{$C_4$} 
 & \multicolumn{1}{c}{$C_5$} & \multicolumn{1}{c}{$C_6$}\\
\cline{2-7}
$R_1$ & $b_1$ & $b_2$ & $b_3$ & $b_4$ & $b_5$ & $b_6$\\
\cline{2-7}
$R_2$ & $b_7$ & $b_8$ & $b_9$ & $b_{10}$ & $b_{11}$ & $b_{12}$\\
\cline{2-7}
$R_3$ & $b_{13}$ & $b_{14}$ & $b_{15}$ & $b_{16}$ & $b_{17}$ & $b_{18}$\\
\cline{2-7}
\end{tabular}
\end{figure}
The resulting $(2,18)$-CIR of $\K_{6,6}$ constructed
as described above is illustrated in Fig.~\ref{fig:K66}.
\vspace{-5pt}
\begin{figure}[H]
\centering
\includegraphics[scale=0.7]{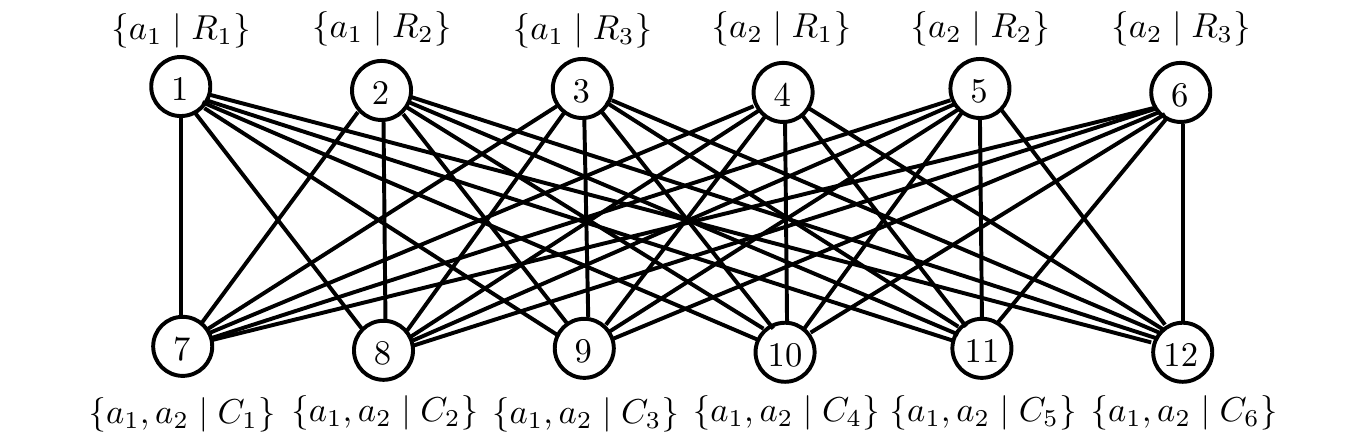}
\caption{A $(2,18)$-CIR of $\K_{6,6}$.
The sets $R_1$, $R_2$, and $R_3$ are pairwise disjoint. 
The sets $C_1,\ldots,C_6$ are also pairwise disjoint.
Each pair of sets $R_i$ and $C_j$ has an intersection of size one.
Both $R_i$'s and $C_j$'s are subsets of $[b_1,\ldots,b_{18}]$.}
\label{fig:K66}
\end{figure}

We now proceed to verify that this feature assignment is indeed a cointersection representation of $\K_{n,n}$.   

We first verify that the cointersection Condition holds for non-edges of $\K_{n,n}$.
For $1 \leq i \neq i' \leq n$, either $i_a \neq i'_a$ or $i_b \neq i'_b$. 
If $i_a \neq i'_a$ then $A_i \cap A_{i'} = \{a_{i_a}\} \cap \{a_{i'_a}\} = \varnothing$. 
If $i_b \neq i'_b$ then $B_i \cap B_{i'} = R_{i_b} \cap R_{i'_b} = \varnothing$, 
because the sets $R_i$ form a partition.  
In either case, we have $A_i \cap A_{i'} = \varnothing$ or $B_i \cap B_{i'}
= \varnothing$.  
For $n+1 \leq i \neq i' \leq 2n$, we always have $B_i \cap B_{i'}
= C_i \cap C_{i'} = \varnothing$, since all the pairs of sets $C_i$ are disjoint. 

Next, we verify that the cointersection Condition holds for edges of $\K_{n,n}$.
Indeed, for $1 \leq i \leq n$ and $n+1 \leq j \leq 2n$, we have
$A_i \cap A_{j} = \{a_{i_a}\} \cap \A = \{a_{i_a}\} \neq \varnothing$, and moreover,
$B_i \cap B_{j} = R_{i_b} \cap C_j \neq \varnothing$, because we assume that
$|R_i \cap C_j| = 1$ for every $i \in [s]$ and $j \in [ts]$. Thus, we constructed a $(t,ts^2)$-CIR of $\K_{n,n}$. 
\end{proof} 

Before proceeding with our discussion, we review a few definitions from the theory of combinatorial designs 
(see, e.g.~\cite[VI.40]{ColbournDinitz2006}). Let $n \geq k \geq 2$. A $2$-$(n,k,1)$ \emph{packing} is a pair $(\X,\S)$, 
where $\X$ is a set of $n$ elements (points) and $\S$ is a collection of
subsets of size $k$ of $\X$ (blocks), such that every pair of points
occurs in \emph{at most} one block in $\S$.  
A $2$-$(n,k,1)$ packing $(\X,\S)$ is \emph{resolvable} if $\S$ can be partitioned
into \emph{parallel classes}, each comprising $n/k$ blocks
that partition $\X$. We provide an example for a $2$-$(9,3,1)$ resolvable packing below. \vspace{-5pt}
\begin{figure}[H]
\centering
\begin{tabular}{cccc}
$\{1,2,3\}$ & \qquad $\{1,4,7\}$ & \qquad $\{1,5,9\}$ & \qquad $\{1,6,8\}$\\
$\{4,5,6\}$ & \qquad $\{2,5,8\}$ & \qquad $\{2,6,7\}$ & \qquad $\{2,4,9\}$\\
$\{7,8,9\}$ & \qquad $\{3,6,9\}$ & \qquad $\{3,4,8\}$ & \qquad $\{3,5,7\}$
\end{tabular}
\caption{A $2$-$(9,3,1)$ resolvable packing with four parallel classes.}
\label{fig:affine_plane}
\vspace{-5pt}
\end{figure}

The following simple lemma describes a property of a $2$-$(k^2,k,1)$ resolvable
packing that will be of importance in the proof of upcoming Theorem~\ref{thm:packing}. 

\begin{lemma} 
\label{lem:packing}
Let $(\X,\S)$ be a $2$-$(k^2,k,1)$ resolvable packing. If $S \in \S$ and 
$S' \in \S$ are two blocks from different parallel classes, then 
$|S \cap S'| = 1$.   
\end{lemma}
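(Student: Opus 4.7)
The plan is to prove both $|S\cap S'|\le 1$ and $|S\cap S'|\ge 1$ separately. The upper bound is the easy half. Since $(\X,\S)$ is a $2$-$(k^2,k,1)$ packing, every pair of points lies in at most one block; if $|S\cap S'|\ge 2$ there would be a pair of points sitting inside both blocks $S$ and $S'$, contradicting this property. So the first step just invokes the packing condition directly.

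The main content of the lemma is the lower bound $|S\cap S'|\ge 1$, and this is where resolvability is used. Fix the parallel class containing $S'$; call it $\mathcal{P}$. By definition $\mathcal{P}$ partitions $\X$ into $|\X|/k=k$ blocks of size $k$, one of which is $S'$ itself. Now I would look at where the $k$ points of $S$ land under this partition. Each point of $S$ is covered by exactly one block of $\mathcal{P}$, so counting gives a distribution of the $k$ points of $S$ among the $k$ blocks of $\mathcal{P}$.

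Assume for contradiction that $|S\cap S'|=0$. Then none of the $k$ points of $S$ is in $S'$, so all $k$ points must be distributed among the remaining $k-1$ blocks of $\mathcal{P}$. By pigeonhole, some block $S''\in\mathcal{P}\setminus\{S'\}$ must contain at least two points of $S$. But then those two points lie simultaneously in $S$ and in $S''$, two distinct blocks of $\S$, violating the $2$-$(k^2,k,1)$ packing property once again. This contradiction forces $|S\cap S'|\ge 1$.

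Combining the two bounds yields $|S\cap S'|=1$. The only subtle point is making sure $S''\ne S$: this holds because $S$ and $S'$ lie in different parallel classes by hypothesis, so $S\notin\mathcal{P}$ and in particular $S\ne S''\in\mathcal{P}$. Beyond that, the argument is a one-line pigeonhole on top of the packing axiom, so I do not expect any real obstacle.
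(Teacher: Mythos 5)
Your proof is correct and is essentially the paper's argument: both halves rest on the packing axiom ($|S\cap S'|\le 1$) together with the fact that a parallel class partitions the $k^2$ points into $k$ blocks of size $k$. The paper phrases the lower bound as a direct count ($k=|S'|=\sum_{T\in\C}|S'\cap T|$ with each term at most $1$, so no term can vanish), while you phrase it contrapositively via pigeonhole on the $k$ points of $S$ among $k-1$ blocks; these are the same counting argument, and your check that $S''\ne S$ is the right detail to note.
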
 
\begin{proof}
By the definition of a packing, every pair of points is contained in exactly
one block. Therefore, any two different blocks have at most one point in common.
Hence, $|S \cap S'| \leq 1$.  
Suppose that $S$ and $S'$ belong two different parallel classes $\C$
and $\C'$, respectively. Note that each parallel class consists of precisely
$k = k^2/k$ disjoint blocks. These $k$ blocks together partition the set $\X$.
Therefore, if $S' \notin \C$ then it must intersect each block in $\C$ at 
at least one point, for otherwise 
\[
|S'| = |\cup_{S \in \C} S' \cap S| = \sum_{S \in \C} |S' \cap S| < 
\sum_{S \in \C} 1 = k,
\]
a contradiction. Hence, $|S \cap S'| \geq 1$. Thus, $|S \cap S'| = 1$.    
\end{proof} 

\begin{theorem}
\label{thm:packing}
If there exists a $2$-$(k^2, k, 1)$-resolvable packing with at least $r \geq 2$ parallel classes then $\ttc(\knr) = 2n$, where $n = k^2$, and $\knr$ is the complete $r$-partite graph $\K_{n,\ldots,n}$.
\end{theorem}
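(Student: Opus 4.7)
The plan is to establish matching bounds $2n \leq \ttc(\knr) \leq 2n$. For the lower bound I would first show that $\tto(\knr) \geq n^2$. Fix any two of the $r$ parts; the induced subgraph on them is a copy of $\K_{n,n}$ with $n^2$ edges. Since any clique of $\knr$ is a transversal (at most one vertex per part), each clique contains at most one edge between the two chosen parts, so any edge clique cover of $\knr$ uses at least $n^2$ cliques. Corollary~\ref{cr:lower_bound} then yields $\ttc(\knr) \geq \lceil 2\sqrt{\tto(\knr)}\,\rceil \geq 2n$.

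For the upper bound I would construct an $(n\mid n)$-CIR directly from the resolvable packing. Let $\sigma:[r]\to[r]$ be any fixed-point-free permutation (which exists because $r\geq 2$; for instance the cyclic shift $i\mapsto (i\bmod r)+1$), and take $\A$ and $\B$ to be two disjoint copies of $\X$, so that $|\A|=|\B|=k^2=n$. Index the vertices in the $i$-th part of $\knr$ by the points of $\X$, and to the vertex $v$ corresponding to $x\in\X$ in part $i$ assign
\[
A_v \;=\; S_i(x), \qquad B_v \;=\; S_{\sigma(i)}(x),
\]
where $S_j(x)$ denotes the unique block of the parallel class $\C_j$ containing $x$.

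I would then verify the cointersection Condition case by case. For $v,w$ in different parts $i\neq j$ (corresponding to $x,y\in\X$), both $A_v\cap A_w$ and $B_v\cap B_w$ are intersections of the form $S_a(x)\cap S_b(y)$ with $a\neq b$, since $i\neq j$ and $\sigma$ is a permutation so $\sigma(i)\neq\sigma(j)$; Lemma~\ref{lem:packing} makes each a singleton, so both are nonempty and $v,w$ are correctly declared adjacent. For distinct $v,w$ in the same part $i$, corresponding to $x\neq y$, the pair $\{x,y\}$ lies in at most one block of the whole packing: if that block belongs to $\C_i$, then since $\sigma(i)\neq i$ it does not belong to $\C_{\sigma(i)}$, so $x,y$ lie in different blocks of $\C_{\sigma(i)}$ and $B_v\cap B_w=\varnothing$; otherwise $x,y$ already lie in different blocks of $\C_i$ and $A_v\cap A_w=\varnothing$. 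Either way at least one intersection is empty, and $v,w$ are correctly declared non-adjacent.

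The main subtlety lies in this last case: it is the interplay between the packing property (each pair of points lies in at most one block) and the fixed-point-freeness of $\sigma$ that destroys the $B$-intersection in the otherwise problematic ``same part, same block'' subcase. Without choosing $\sigma$ to be a derangement the argument collapses, since a fixed point $\sigma(i)=i$ would leave both $A_v\cap A_w$ and $B_v\cap B_w$ nonempty whenever $x,y$ share a block of $\C_i$, erroneously creating an edge inside a part.
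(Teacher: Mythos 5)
Your proof is correct, but both halves take a genuinely different route from the paper. For the lower bound, the paper observes that $\knn$ is an induced subgraph of $\knr$ and invokes $\ttc(\knn)=2n$ from Proposition~\ref{pr:Knn}; you instead bound $\tto(\knr)\ge n^2$ directly by counting the $n^2$ edges between two fixed parts against cliques (each clique being a partial transversal covers at most one such edge) and then apply Corollary~\ref{cr:lower_bound}. Your version is a bit more self-contained, since it does not rely on the construction behind Proposition~\ref{pr:Knn}. For the upper bound, the paper indexes the vertices of part $\ell$ by pairs $(i,j)\in[k]\times[k]$ and assigns \emph{two blocks of the same parallel class} $\C_\ell$, namely $A=S^\ell_i$ and $B=S^\ell_j$; same-part non-adjacency then follows purely from disjointness of distinct blocks within one class, and the packing property enters only through Lemma~\ref{lem:packing} for cross-part edges. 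You instead index the vertices of part $i$ by the points $x\in\X$ and assign blocks from \emph{two different classes}, $A_v=S_i(x)$ and $B_v=S_{\sigma(i)}(x)$ with $\sigma$ a derangement; cross-part edges are handled by Lemma~\ref{lem:packing} exactly as in the paper, but your same-part case additionally needs the packing axiom (each pair of points in at most one block) combined with $\sigma(i)\neq i$. The two constructions are equivalent in strength (indeed, since $x\mapsto(S_i(x),S_{\sigma(i)}(x))$ is a bijection onto $\C_i\times\C_{\sigma(i)}$, yours also realizes all $k^2$ block pairs per part, just drawn from two classes instead of one), and both consume exactly $r$ parallel classes; the paper's is marginally leaner in its verification, while yours has a more canonical vertex labelling by points of $\X$ and makes transparent why $r\ge 2$ is needed for your pairing of classes.
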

\begin{proof} 
Note that for $r \geq 2$, $\knn$ is an induced subgraph of
$\knr$. Therefore, by Proposition~\ref{pr:Knn}, we have
\[
\ttc(\knr) \geq \ttc(\knn) = 2n. 
\]
Hence, it remains to prove that we can co-represent $\knr$ by using $2n$
features if a certain resolvable packing exists.
 
Let us assume that a $2$-$(n=k^2, k, 1)$-resolvable packing $(\X,\S)$ 
with at least $r$ parallel classes, say $\C_1,\ldots,\C_r$, exists. 
Let $\A = \{a_x \colon x \in \X\}$ and $\B = \{b_x \colon x \in \X\}$. 
Then $|\A| = |\B| = n$. We assign to the vertices of $\knr$ features
from $\A$ and $\B$ as follows. 
Consider $n$ vertices in the $\ell$th part $P_\ell$ of the graph $(\ell \in [r])$. 
We partition these $n=k^2$ vertices into $k$ groups, each of which consists
of precisely $k$ vertices. Let $G^\ell_i = \{v^{\ell}_{i,j} \colon j \in [k]\}$ denote
the $i$th vertex group of $P_\ell$, for $i \in [k]$ and $\ell \in [r]$. 
The vertices in $P_\ell$ are then assigned features according to the 
blocks in the $\ell$th parallel class $\C_\ell = \{S^\ell_1,\ldots,S^\ell_k\}$
in the following way. 
The vertex $v^{\ell}_{i,j}$ in the $i$th group $G^\ell_i$ has feature sets
$A_{v^{\ell}_{i,j}} = \{a_x \colon x \in S^\ell_i\}$ and $B_{v^{\ell}_{i,j}} = \{b_x \colon x \in S^\ell_j\}$. 

We show next that the above feature assignment indeed satisfies the cointersection Condition. 

First, we verify this condition for the non-edges of $\knr$. 
Consider each part $P_\ell$ of the graph. 
If $v^{\ell}_{i,j}$ and $v^{\ell}_{i,j'}$, where
$j \neq j'$, are two 
distinct vertices that belong to the same group $G^\ell_i$, then 
\[
|B_{v^{\ell}_{i,j}} \cap B_{v^{\ell}_{i,j'}}| = |S^\ell_j \cap S^\ell_{j'}| = 0.
\] 
The reason is that when $j \neq j'$, $S^\ell_j$ and $S^\ell_{j'}$ are two distinct
blocks in the same parallel class $\C_\ell$ of the packing, and hence must be
disjoint. If $v^{\ell}_{i,j}$ and $v^{\ell}_{i',j'}$ belong to different groups $G^\ell_i$
and $G^\ell_{i'}$, respectively, where $i \neq i'$, then
\[
|A_{v^{\ell}_{i,j}} \cap A_{v^{\ell}_{i',j'}}| = |S^\ell_i \cap S^\ell_{i'}| = 0,
\] 
because $S^\ell_i$ and $S^\ell_{i'}$ are two distinct blocks in the same
parallel class $\C_\ell$. Thus, every pair of vertices from the same part $P_\ell$ $(\ell \in [r])$
has either no $\A$-features or no $\B$-features in common. 

Second, we verify the cointersection Condition for the edges of $\knr$ that 
connect vertices in different parts. Suppose that $v^{\ell}_{i,j} \in P_\ell$
and $v^{\ell'}_{i',j'} \in P_{\ell'}$, where $P_\ell$ and $P_{\ell'}$ are different
parts of the complete $r$-partite graph. Then we have
\[
|A_{v^{\ell}_{i,j}} \cap A_{v^{\ell'}_{i',j'}}| = |S^\ell_i \cap S^{\ell'}_{i'}| = 1.
\]  
The validity of the above claim follows from the observation that for $\ell \neq \ell'$, the two blocks 
$S^\ell_i$ and $S^{\ell'}_{i'}$, which are from different parallel classes of the 
packing, must intersect at one point (according to Lemma~\ref{lem:packing}). 
Similarly, we have
\[
|B_{v^{\ell}_{i,j}} \cap B_{v^{\ell'}_{i',j'}}| = |S^\ell_j \cap S^{\ell'}_{j'}| = 1.
\]  
Therefore, the cointersection Condition is satisfied for all edges of the graph. 
Thus, the assigned features form an $(n,n)$-CIR of
$\knr$, which uses precisely $2n$ features, as desired. 
\end{proof}

\begin{example}
\label{ex:K9999}
To illustrate the idea of Theorem~\ref{thm:packing}, we consider
$\K_{9,9,9,9}$ and the $2$-$(9,3,1)$ resolvable packing with four parallel 
classes $\C_1,\C_2,\C_3, \C_4$ given in Fig.~\ref{fig:affine_plane}. 
Note that by Theorem~\ref{thm:packing}, \vspace{-5pt}
\[
\ttc(\K_{9,9,9,9}) = \ttc(\K_{9,9,9}) = \ttc(\K_{9,9}) = 2\sqrt{\tto(\K_{9,9})}
=18.
\]
We omit the edges of the graph and provide a $(9,9)$-CIR 
of $\K_{9,9,9,9}$ in Fig.~\ref{fig:K9999}. Note that in this figure, instead
of $a_i$ and $b_j$, we simply use $i$ and $j$, respectively.  
\vspace{-5pt}
\begin{figure}[H]
\centering
\includegraphics[scale=0.52]{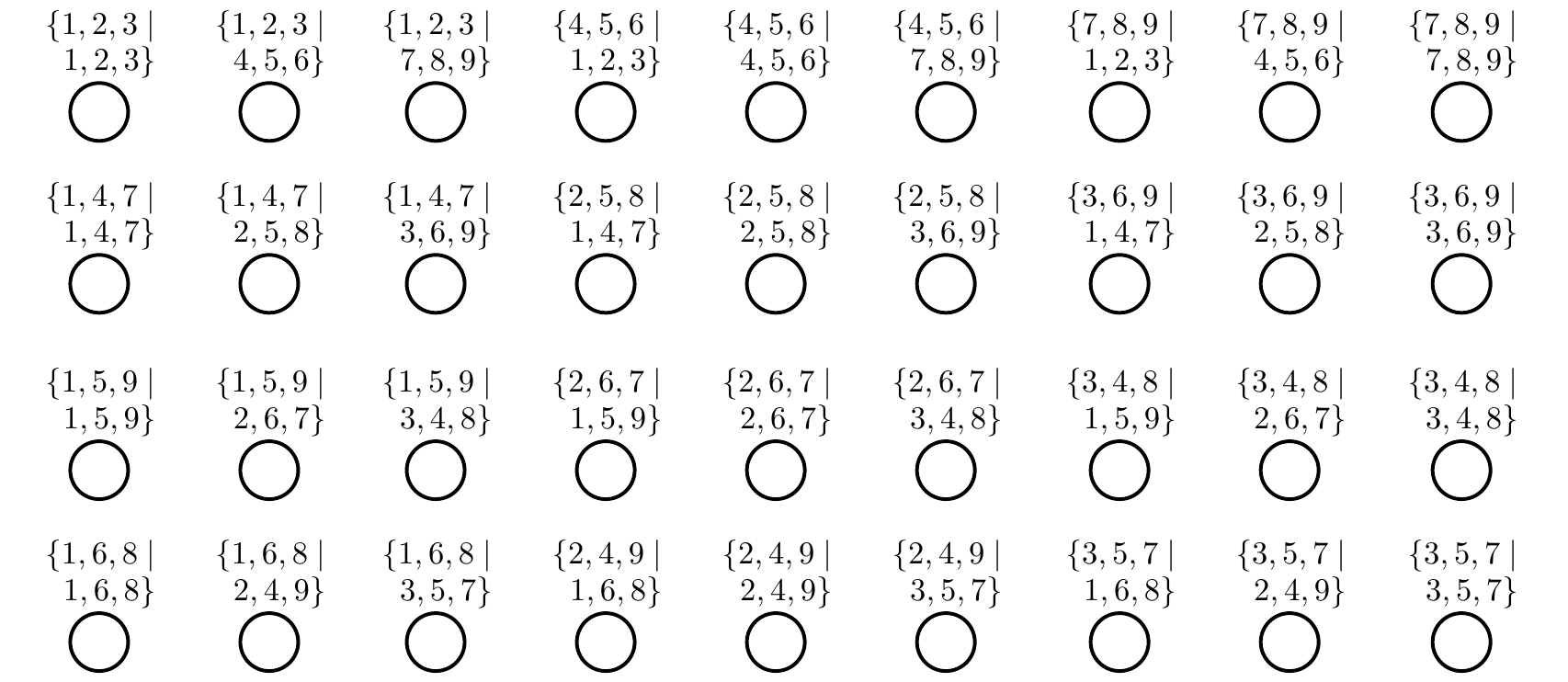}
\caption{An optimal $(9,9)$-CIR of $\K_{9,9,9,9}$
via a $2$-$(9,3,1)$ resolvable packing with four classes. In fact, this is 
a $2$-$(9,3,1)$ resolvable design, which is also an affine plane of order $9$.}
\label{fig:K9999} \vspace{-5pt}
\end{figure}
\end{example}

A $2$-$(n,k,1)$ resolvable \emph{design} (see, e.g.~~\cite[II.7]{ColbournDinitz2006}) is equivalent to a $2$-$(n,k,1)$
resolvable packing defined earlier, except that one requires that
every pair of points appear in \emph{exactly} one block. 
An \emph{affine plane} of order $k$ is a $2$-$(k^2,k,1)$
resolvable design. So far, only affine planes of orders that are
prime powers are known (see, e.g.~\cite[VII.2.2]{ColbournDinitz2006}).   

\begin{corollary}
\label{cr:affine_plane}
If there exists an affine plane of order $k$ then $\ttc(\knr) = 2n$, for every $r \leq k+1$, 
where $n = k^2$. As a consequence, this equality holds when $k$ is a prime power.  
\end{corollary}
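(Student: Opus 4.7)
The plan is that this is essentially a direct application of Theorem~\ref{thm:packing}, modulo two small observations about affine planes. First, by definition, an affine plane of order $k$ is a $2$-$(k^2,k,1)$ resolvable design, and every such design is in particular a $2$-$(k^2,k,1)$ resolvable packing (the condition that each pair appears in \emph{exactly} one block trivially implies that each pair appears in \emph{at most} one block). So to invoke Theorem~\ref{thm:packing}, I only need to verify that an affine plane of order $k$ has at least $r$ parallel classes for every $r \leq k+1$; in fact it has exactly $k+1$.

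I would justify this count briefly as follows. Fix a point $x$ in an affine plane of order $k$. The remaining $k^2-1$ points are partitioned by the lines through $x$, each of which contains $k-1$ points other than $x$. Hence the number of lines through $x$ equals $(k^2-1)/(k-1) = k+1$. On the other hand, exactly one line from each parallel class passes through $x$ (since the blocks in one parallel class partition the point set). Therefore the number of parallel classes is exactly $k+1$. Thus for every $r \leq k+1$, we may pick $r$ distinct parallel classes and apply Theorem~\ref{thm:packing} to conclude $\ttc(\knr) = 2n$ with $n = k^2$.

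For the second assertion, I would invoke the classical construction of $\mathrm{AG}(2,\mathbb{F}_q)$: when $k=q$ is a prime power, the set of points $\mathbb{F}_q^2$ together with the set of affine lines forms an affine plane of order $q$ (see, e.g.~\cite[VII.2.2]{ColbournDinitz2006}). Hence for such $k$, the first part of the corollary applies.

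The argument contains no real obstacle; the only thing to get right is that Theorem~\ref{thm:packing} only requires a packing, so no modification of the earlier machinery is needed. The corollary is therefore very short — the content is entirely absorbed by Theorem~\ref{thm:packing} and the elementary parameter count above.
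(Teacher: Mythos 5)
Your proposal is correct and follows essentially the same route as the paper: observe that an affine plane of order $k$ is a $2$-$(k^2,k,1)$ resolvable design, hence a resolvable packing with exactly $k+1$ parallel classes, and apply Theorem~\ref{thm:packing}; the only difference is that you prove the $k+1$ parallel-class count (via the $(k^2-1)/(k-1)$ line count through a point), whereas the paper simply cites it as well known, and you exhibit $\mathrm{AG}(2,\mathbb{F}_q)$ explicitly for the prime-power case.
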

\begin{proof} 
It is well known that a $2$-$(k^2,k,1)$ resolvable design has precisely $k+1$
parallel classes.
As an affine plane of order $k$ is a $2$-$(k^2,k,1)$ resolvable design, which
is also a packing, by Theorem~\ref{thm:packing}, the first assertion of the
corollary follows. The last assertion also holds because an affine plane of a prime
power order always exists. 
The resolvable packing used in Example~\ref{ex:K9999} is in fact an affine 
plane of order three. 
\end{proof} 

In light of Corollary~\ref{cr:affine_plane}, it is apparently nontrivial to prove
(theoretically or computationally) 
that $\ttc(\K_{n^{[r]}}) > 2n$, where $n = k^2$, $r = k+1$, 
when $k$ is not a prime power. Indeed, such a proof (if any) would imply that 
an affine plane of order $k$ does not exist. Note that the question whether 
an affine plane of an order which is not a prime power exists is still a widely open question in 
finite geometry. It is not even known whether an affine plane of order $12$
or $15$ exists (see, e.g.~\cite[VII.2.2]{ColbournDinitz2006}).    

\begin{corollary}
\label{cr:4}
$\ttc(\K_{n,n,n}) = 2n$ for every $n = k^2$, where $k \geq 2$ is not necessarily a prime power. 
\end{corollary}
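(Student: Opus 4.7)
The plan is to invoke Theorem~\ref{thm:packing} with $r=3$, which reduces the claim to exhibiting, for every integer $k\ge 2$, a $2$-$(k^2,k,1)$-resolvable packing with at least three parallel classes. The lower bound $\ttc(\K_{n,n,n})\ge 2n$ is already immediate from the reasoning used at the beginning of the proof of Theorem~\ref{thm:packing}, since $\K_{n,n}$ is an induced subgraph of $\K_{n,n,n}$ and $\ttc(\K_{n,n})=2n$ by Proposition~\ref{pr:Knn}. So the entire task is to produce the required packing unconditionally on $k$.

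The key observation is that although Corollary~\ref{cr:affine_plane} relied on affine planes (which give $k+1$ parallel classes and require $k$ to be a prime power), here we only need three parallel classes, and this much weaker requirement turns out to be equivalent to the existence of a single Latin square of order $k$ rather than a pair of mutually orthogonal Latin squares. Concretely, I would take the point set $\X=[k]\times[k]$ and define three partitions: the \emph{row} class $\C_{\mathrm{row}}=\{\{(i,j):j\in[k]\}:i\in[k]\}$, the \emph{column} class $\C_{\mathrm{col}}=\{\{(i,j):i\in[k]\}:j\in[k]\}$, and, given any Latin square $L:[k]\times[k]\to[k]$, the \emph{symbol} class $\C_{\mathrm{sym}}=\{\{(i,j):L(i,j)=s\}:s\in[k]\}$. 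Each class partitions $\X$ into $k$ blocks of size $k$. Any two distinct points in the same row (respectively column) already occupy a common block of $\C_{\mathrm{row}}$ (respectively $\C_{\mathrm{col}}$); any two points not sharing a row or column lie in a common symbol-block if and only if $L$ takes the same value at both cells, and by the Latin square property this cannot happen twice for the same pair. Hence any two points share at most one block, so $(\X,\C_{\mathrm{row}}\cup\C_{\mathrm{col}}\cup\C_{\mathrm{sym}})$ is a $2$-$(k^2,k,1)$-resolvable packing with three parallel classes.

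Since Latin squares of every order $k\ge 1$ exist trivially (for example, the Cayley table of $\mathbb{Z}_k$, i.e.\ $L(i,j)=i+j\bmod k$), the packing exists for \emph{every} $k\ge 2$, with no prime-power restriction. Applying Theorem~\ref{thm:packing} with $r=3$ then yields the matching upper bound $\ttc(\K_{n,n,n})\le 2n$, completing the proof.

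There is no real obstacle here; the only subtle point is recognizing that three parallel classes correspond to one Latin square rather than to the MOLS needed for an affine plane, which is why the argument evades the open questions about affine planes of non-prime-power orders (such as orders $12$ and $15$) mentioned after Corollary~\ref{cr:affine_plane}.
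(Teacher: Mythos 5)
Your proposal is correct and follows essentially the same route as the paper: reduce via Theorem~\ref{thm:packing} to constructing a $2$-$(k^2,k,1)$ resolvable packing with three parallel classes, then take rows, columns, and a third class. Your third class (the symbol class of an arbitrary Latin square, e.g.\ the Cayley table of $\mathbb{Z}_k$) is just a mild generalization of the paper's choice, the blocks along the direction of the main diagonal, which correspond precisely to the cyclic Latin square.
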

\begin{proof} 
By Theorem~\ref{thm:packing}, it suffices to construct a $2$-$(k^2,k,1)$ resolvable
packing with three parallel classes for every $k \geq 2$. 
Let $\X = [k^2]$. We can arrange these $k^2$ points into a $k \times k$ matrix.
Then the $k$ blocks containing the points along the rows of this matrix form the first parallel class. 
The $k$ blocks containing the points along the columns of this matrix form the second parallel
class. 
The $k$ blocks containing the points along the direction of the main diagonal form the third parallel class. 
It is easy to verify that these blocks and the three parallel classes form a $2$-$(k^2,k,1)$ resolvable packing. 
\begin{figure}[H]
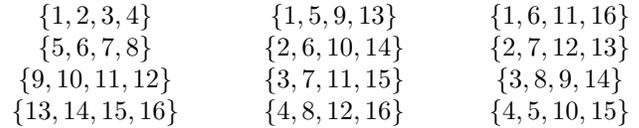

\centering
\begin{tabular}{ccc}
$\{1,2,3,4\}$ & \qquad $\{1,5,9, 13\}$ & \qquad $\{1, 6, 11, 16\}$\\
$\{5,6,7,8\}$ & \qquad $\{2,6,10,14\}$ & \qquad $\{2, 7, 12, 13\}$\\
$\{9,10,11,12\}$ & \qquad $\{3,7,11,15\}$ & \qquad $\{3,8,9,14\}$\\
$\{13,14,15,16\}$ & \qquad $\{4,8,12,16\}$ & \qquad $\{4,5,10,15\}$
\end{tabular}
\caption{A $2$-$(16,4,1)$ resolvable packing with three parallel classes.}
\label{fig:packing}
\end{figure} 
For example, when $k = 4$, the three
parallel classes of this packing are given in Fig.~\ref{fig:packing}.  
\end{proof} 

Until this point, we have focused on providing several examples of graphs which meet the lower
bound on $\ttc$ established in Lemma~\ref{lem:lower_bound}. 
However, as we establish in subsequent propositions, the lower bound many not always be achievable. 
Note that by Corollary~\ref{cr:4}, $\ttc(\K_{n,n,n}) = 2n$ for 
$n = 4, 9, 16, \ldots$ This is, in contrast, not true for $n = 2,3$. 

We first need to prove the following lemma, which states an important property of cointersection representations of triangle-free graphs (e.g. bipartite graphs) that meet the lower bound on $\ttc$ in Lemma~\ref{lem:lower_bound}. 
Recall that if $\G=(\V,\E)$ is a triangle-free graph, then $\toG = |\E|$. 

\begin{lemma} 
\label{lem:degree}
If there exists an $(\al\mid \be)$-CIR of a 
triangle-free graph $\G = (\V,\E)$ where $\al\be=|\E|$, then 
\[
|A_v||B_v| = \deg(v),
\]
for every $v \in V$. Moreover, if $(u,v) \in \E$, then $|A_u \cap A_v| = 
|B_u \cap B_v| = 1$. 
\end{lemma}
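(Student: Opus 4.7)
The plan is to revisit the edge clique cover interpretation used in the proof of Lemma~\ref{lem:lower_bound}. Recall that to each pair $(a,b) \in \A \times \B$ one associates the clique $\C_{a,b} = \{w \in \V : a \in A_w, b \in B_w\}$, and the family $\cC = \{\C_{a,b} : (a,b) \in \A \times \B\}$ is an edge clique cover of $\G$ of size $\al\be$. This will be the central combinatorial object I exploit.

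First, I would use triangle-freeness to show that every $\C_{a,b}$ has at most two vertices, since a clique on three vertices would form a triangle in $\G$. Consequently each $\C_{a,b}$ is empty, a single vertex, or precisely an edge of $\G$, and in particular covers at most one edge. Second, since $|\cC| = \al\be = |\E|$ and $\cC$ must cover all $|\E|$ edges, a straightforward pigeonhole argument forces every $\C_{a,b}$ to cover exactly one edge; that is, each $\C_{a,b}$ has size exactly two, and the map $\C_{a,b} \mapsto \text{(edge it covers)}$ is a bijection from $\cC$ to $\E$.

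Third, I would count incidences at a vertex. Fix $v \in \V$. By the definition of $\C_{a,b}$, a pair $(a,b) \in A_v \times B_v$ gives a clique containing $v$, and conversely any clique in $\cC$ containing $v$ arises from exactly one such pair. Under the bijection from the previous step, these cliques correspond exactly to the edges of $\G$ incident to $v$. Hence $|A_v| |B_v| = \deg(v)$.

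Finally, for an edge $(u,v) \in \E$, the pairs $(a,b)$ with $a \in A_u \cap A_v$ and $b \in B_u \cap B_v$ are exactly those with $\C_{a,b} \supseteq \{u,v\}$; by the uniqueness established in step two there is exactly one such pair, so $|A_u \cap A_v| \cdot |B_u \cap B_v| = 1$, which forces $|A_u \cap A_v| = |B_u \cap B_v| = 1$. The only delicate point is the pigeonhole step: one has to rule out the possibility that some $\C_{a,b}$ is empty or a singleton (which would waste a pair and leave an edge uncovered), but this follows immediately from the equality $\al\be = |\E|$ together with the at-most-one-edge-per-clique observation.
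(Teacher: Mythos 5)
Your proof is correct, and it runs the argument in the opposite direction from the paper, organized around a different key object. The paper's proof labels each edge $(u,v)$ with a chosen common pair $\{a_{u,v}\mid b_{u,v}\}$, uses triangle-freeness to show distinct edges receive distinct pairs, concludes from $\al\be=|\E|$ that every pair in $\A\times\B$ is used by exactly one edge, and then finishes with two separate triangle-based contradiction arguments: one (a replacement argument) to force $|A_u\cap A_v|=|B_u\cap B_v|=1$, and another to rule out $|A_v||B_v|>\deg(v)$. You instead start from the edge clique cover $\{\C_{a,b}\colon (a,b)\in\A\times\B\}$ of Lemma~\ref{lem:lower_bound}, note that triangle-freeness caps each $\C_{a,b}$ at two vertices so it covers at most one edge, and let the equality $\al\be=|\E|$ force, by pigeonhole, a bijection between feature pairs and edges; all three conclusions then drop out as pure counting statements (pairs in $A_v\times B_v$ biject with edges at $v$; pairs common to $u$ and $v$ biject with the single edge $(u,v)$), with no further case analysis. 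Your route is slightly cleaner in that one bijection replaces the paper's choice function plus two ad hoc contradictions, and it reuses machinery already established; the paper's edge-labeling viewpoint has the compensating advantage of being the formulation reused constructively elsewhere (e.g., in the feature assignments of Proposition~\ref{pr:simple_graphs} and Theorem~\ref{thm:path_exp}). One small point of care, which your bijection does implicitly handle: a priori two distinct pairs $(a,b)\neq(a',b')$ could define the same vertex set $\C_{a,b}=\C_{a',b'}$, so the correct count at a vertex is over pairs rather than over distinct cliques; since the pair-to-edge map is injective, this degeneracy cannot occur and your count $|A_v||B_v|=\deg(v)$ stands.
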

 
\begin{proof} 
Suppose that $\{(A_v,B_v) \colon v \in \V\}$ is an $(\al\mid \be)$-CIR of $\G$, where $\al\be = |\E|$. 
For each edge $(u,v) \in \E$, 
choose an arbitrary feature $a_{u,v} \in A_u \cap A_v$ and an arbitrary feature
$b_{u,v} \in B_u \cap B_v$ and assign the pair $\{a_{u,v}\mid b_{u,v}\}$
to this edge. 

We claim that different edges must have different pairs of features. 
Indeed, if $(u,v)$ and $(u',v')$ are two different edges of $\G$ such that
$a_{u,v} = a_{u',v'}$ and $b_{u,v} = b_{u',v'}$, then the four vertices
$u,v,u',v'$ have a pair of features in common, namely $\{a_{u,v} \mid b_{u,v}\}$. 
This implies that any three distinct vertices among these four must form a triangle in $\G$, which
contradicts our assumption that $\G$ is triangle-free. 
Thus, different edges must be assigned different pairs of features, as claimed.
A consequence of this claim is that for every vertex $v \in \V$, the number of 
pairs of features $\{a\mid b\}$, where $a \in A_v$ and $b \in B_v$, must be 
greater than or equal to the number of edges incident to $v$. In other words,
$|A_v||B_v| \geq \deg(v)$, for every $v \in \V$. 

Moreover, by our assumption, the number of possible pairs of features $\{a\mid b\}$, 
where $a \in \A$ and $b \in \B$, is $\al\be$, which is the same as
the number of edges. Therefore, each such pair of features must be used exactly 
once, as features of some edge. It is now clear that if $(u,v) \in \E$, then
$|A_u \cap A_v| = 1$ and $|B_u \cap B_v| = 1$. For otherwise, we could replace
the assigned features $\{a_{u,v} \mid b_{u,v}\}$ for $(u,v)$ 
by a different pair of features $\{a'\mid b'\}$, 
where $a' \in A_u \cap A_v$ and $b' \in B_u \cap B_v$. But as proved earlier, 
$\{a'\mid b'\}$ must already have been used as a pair of features of some other edge $(u',v') \neq (u,v)$. 
That would imply a triangle formed by some three distinct vertices among $u,v,u'$,
and $v'$, which, again, contradicts our assumption that $\G$ is triangle-free.  

Finally, suppose that $|A_v||B_v| > \deg(v)$ for some $v \in \V$. 
Then there must be a pair of features $\{a \mid b\}$, where $a \in A_v$ and 
$b \in B_v$, that is not assigned to any edge incident to $v$. 
However, as shown earlier, this pair of features $\{a \mid b\}$ must be used
as features of some edge, say $(u,w)$, that is not incident to $v$. Then
$u$, $v$, and $w$ share the common features $a\in \A$ and $b \in \B$ and hence 
must form a triangle in $\G$, which is impossible. Thus, $|A_v||B_v| = \deg(v)$
for every $v \in \V$, as stated. 
\end{proof} 

\begin{proposition}
$\ttc(\K_{n,n,n}) > 2n$ for $n = 2,3$. Hence, for the given graphs, the lower bound 
$\ttc \geq \min_{\al\be \geq \tto} (\al+\be)$ established in
Lemma~\ref{lem:lower_bound} is not tight. 
\end{proposition}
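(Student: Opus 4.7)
The plan is to assume toward a contradiction that an $(\alpha \mid \beta)$-CIR of $\K_{n,n,n}$ with $\alpha + \beta = 2n$ exists, and derive a structural incompatibility for $n \in \{2, 3\}$. A standard Latin-square construction yields $\tto(\K_{n,n,n}) = n^2$ (every edge is covered by a unique triangle indexed by a cell of the Latin square), so Lemma~\ref{lem:lower_bound} forces $\alpha\beta \geq n^2$; combined with $\alpha + \beta = 2n$, the AM-GM inequality pins down $\alpha = \beta = n$.

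I would next unpack the $n^2$ cliques $C_{a,b} = \{v : a \in A_v,\; b \in B_v\}$ induced by the representation. They form an edge clique cover of $\K_{n,n,n}$, every clique in $\K_{n,n,n}$ has size at most $3$, and $\K_{n,n,n}$ has $3n^2$ edges; double counting forces each $C_{a,b}$ to be a triangle containing exactly one vertex from each part $X, Y, Z$. This lets me define maps $\sigma_W : \A \times \B \to W$ for each $W \in \{X, Y, Z\}$ sending $(a,b)$ to the unique $W$-vertex of $C_{a,b}$. A degree count (each vertex has $2n$ neighbors and is thus in exactly $n$ triangles) shows $|\sigma_W^{-1}(w)| = n$ for every $w$, so the fibers form a partition $\pi_W$ of the $n \times n$ grid $\A \times \B$ into $n$ blocks of size $n$. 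The inter-part adjacency condition translates into combinatorial orthogonality: every pair $(w, w') \in W \times W'$ with $W \neq W'$ lies in a unique common triangle, so $\pi_W$ and $\pi_{W'}$ intersect blockwise in singletons. The intra-part non-adjacency condition says that for $w \neq w' \in W$ the blocks $\sigma_W^{-1}(w)$ and $\sigma_W^{-1}(w')$ have disjoint row-projections in $\A$ or disjoint column-projections in $\B$.

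The crux of the proof is to show that for $n \in \{2,3\}$ every block of every $\pi_W$ must be a full row or a full column of the grid. For a size-$n$ block $S$ with row-projection $A$ and column-projection $B$ one has $|A|\cdot |B| \geq n$. For $n = 2$ the only non-row, non-column option is the diagonal shape with $(|A|, |B|) = (2, 2)$, but then $A = \A$ and $B = \B$, so the sibling block in the same part has projections overlapping $S$'s on both axes, contradicting non-adjacency. For $n = 3$ the non-row, non-column options are $(|A|, |B|) \in \{(2,2), (2,3), (3,2), (3,3)\}$; I would dispatch each by the same pattern, namely tracking which cells remain after $S$ is placed and showing that any completion to three blocks of size $3$ forces two of them to share both a row of $\A$ and a column of $\B$, violating the non-adjacency condition within $W$. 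Consequently each $\pi_W$ must coincide with either the row-partition or the column-partition of the grid.

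The contradiction is now immediate: any two partitions of the same type coincide and are therefore not orthogonal (every block meets itself in $n > 1$ cells). By pigeonhole, among the three partitions $\pi_X, \pi_Y, \pi_Z$ at least two are of the same type, violating pairwise orthogonality. Combined with the lower bound $\ttc(\K_{n,n,n}) \geq 2n$ from Corollary~\ref{cr:lower_bound}, this yields the strict inequality $\ttc(\K_{n,n,n}) > 2n$ for $n \in \{2, 3\}$. The main obstacle is the enumeration in the third paragraph; although each configuration reduces quickly to a simple packing conflict inside the small grid, the $n = 3$ case requires a handful of sub-cases to be treated uniformly, and the challenge is to write these up in a compact and transparent way rather than to discover any deep idea.
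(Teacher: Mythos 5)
Your proof is correct, and it takes a genuinely different route from the paper's. The paper disposes of these two graphs computationally, by running the SAT-based algorithm of Section~\ref{subsec:SAT} (which in fact yields the exact values $\ttc(\K_{2,2,2})=5$ and $\ttc(\K_{3,3,3})=8$), and only alludes to an omitted theoretical argument based on the induced subgraphs $\K_{2,2}$ and $\K_{3,3}$ together with Lemma~\ref{lem:degree}. You instead argue globally: $\al+\be=2n$ together with $\al\be\geq\tto(\K_{n,n,n})=n^2$ forces $\al=\be=n$; the $n^2$ cliques $\C_{a,b}$ must then be triangles partitioning the $3n^2$ edges, one vertex per part; and the fibers of the three maps $(a,b)\mapsto$ ($W$-vertex of $\C_{a,b}$) give three partitions of the $n\times n$ feature grid into $n$-blocks that must be pairwise orthogonal (singleton block intersections, from the edges between parts) while blocks within one part must have disjoint row- or column-projections (from the non-edges). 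Your shape analysis then forces every block to be a full row or full column, and pigeonhole over the three parts kills orthogonality. I checked the $n=3$ enumeration you only sketch: the $(3,3)$, $(2,3)$, $(3,2)$, and $(2,2)$ shapes each collapse in a line (in every case the leftover cells cannot be covered by blocks respecting the disjoint-projection rule), so the gap is purely expository; in fact the enumeration can be bypassed entirely by noting that the fiber of $w$ is exactly the rectangle $A_w\times B_w$ (since $w\in\C_{a,b}$ iff $a\in A_w$ and $b\in B_w$), whence $|A_w||B_w|=n$ and, $n$ being prime, every block is automatically a full row or column. As for what each approach buys: the SAT computation gives the exact cointersection numbers of these two graphs with no case analysis but does not generalize, whereas your argument is self-contained, dovetails with the resolvable-packing picture of Theorem~\ref{thm:packing} and Corollary~\ref{cr:4} (it is essentially the converse direction: an optimal representation would force three mutually orthogonal parallel classes on the feature grid), and, with the rectangle observation above, actually proves $\ttc(\K_{n,n,n})>2n$ for every prime $n$, not just $n=2,3$.
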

\begin{proof} 
Since the graphs under consideration are small, one can determine their cointersection numbers by using the algorithm of Section~\ref{subsec:SAT}, resulting in $\ttc(\K_{2,2,2}) = 5$ and $\ttc(\K_{3,3,3}) = 8$. This fact may also be proved theoretically, based on the previously derived results for the induced subgraphs $\K_{2,2}$ and $\K_{3,3}$. The details of the proof are omitted due to lack of space.
\end{proof} 

\begin{proposition}
\label{pr:KMnn}
Let $\K^M_{n,n}$ be a bipartite matrix obtained from $\K_{n,n}$ by removing
a maximum matching. 
Then \vspace{-3pt}
\[
2n-1 \leq \ttc(\K^M_{n,n}) \leq 2n. \vspace{-3pt}
\] 
The lower bound is attained when $n = 2,3$. 
If $n - 1$ is an odd prime, then $\ttc(\K^M_{n,n}) = 2n$.
\end{proposition}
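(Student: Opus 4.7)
The upper bound $\ttc(\K^M_{n,n}) \leq 2n$ is immediate from Lemma~\ref{lem:upper_bound_bipartite} since $\K^M_{n,n}$ is bipartite on $2n$ vertices. For the lower bound, my plan is to use that $\K^M_{n,n}$ is triangle-free, so $\tto(\K^M_{n,n}) = |\E| = n(n-1)$; Lemma~\ref{lem:lower_bound} then forces $\alpha\beta \geq n(n-1)$ for any cointersection parameters $(\alpha,\beta)$, and minimizing $\alpha+\beta$ subject to $\alpha\beta \geq n(n-1)$ yields $\alpha+\beta \geq 2n-1$, realized (up to ordering) only by $\{\alpha,\beta\} = \{n-1,n\}$. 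To attain $2n-1$ for $n = 2, 3$ I would give direct constructions: $\K^M_{2,2}$ is a pair of disjoint edges, co-represented easily using parameters $(1\mid 2)$, and $\K^M_{3,3}$ is isomorphic to $\C_6$, for which Proposition~\ref{pr:simple_graphs} supplies a $(2\mid 3)$-CIR.

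For the main assertion, I would argue by contradiction: suppose $n-1$ is an odd prime (hence $n \geq 4$) and that an $(n-1 \mid n)$-CIR of $\K^M_{n,n}$ exists. Since $\alpha\beta = n(n-1) = |\E|$ and the graph is triangle-free, Lemma~\ref{lem:degree} applies and yields $|A_v||B_v| = \deg(v) = n-1$ for every $v$, together with $|A_u \cap A_v| = |B_u \cap B_v| = 1$ for every edge $(u,v)$. Primality of $n-1$ then pins $(|A_v|, |B_v|)$ to be either $(1, n-1)$ or $(n-1, 1)$, partitioning each of $L$ and $R$ into two vertex types, which I will call ``type-$(1,n-1)$'' and ``type-$(n-1,1)$''.

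The rest is a short edge-type case analysis. An edge between two type-$(1, n-1)$ vertices would force two $(n-1)$-subsets of $\B$ to meet in exactly one element, hence have union of size $2n-3 > n$, which is impossible for $n \geq 4$; an edge between two type-$(n-1, 1)$ vertices would require both $\A$-sets to equal $\A$ itself and yet intersect in exactly one element, impossible for $n \geq 3$. Thus every edge joins a type-$(1, n-1)$ vertex to a type-$(n-1, 1)$ vertex. If $L$ contained vertices of both types, a type-$(1, n-1)$ vertex of $L$ would have all its $n-1$ neighbors in the type-$(n-1, 1)$ part of $R$ while a type-$(n-1, 1)$ vertex of $L$ would have all its neighbors in the type-$(1, n-1)$ part of $R$; yet any two $L$-vertices share $n-2 \geq 1$ common neighbors in $\K^M_{n,n}$, which would force the two parts of $R$ to overlap, a contradiction. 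Thus, up to the $L \leftrightarrow R$ symmetry, I may assume every vertex of $L$ is type-$(1, n-1)$ and every vertex of $R$ is type-$(n-1, 1)$. Writing $B_l = \B \setminus \{b'_l\}$, any two $l, l' \in L$ then satisfy $|B_l \cap B_{l'}| \geq 2(n-1) - n = n-2 \geq 1$, so no two $L$-vertices may share an $\A$-feature (otherwise the cointersection Condition would put an $L$--$L$ edge in $\K^M_{n,n}$); this forces the assignment $l \mapsto a_l$ (with $A_l = \{a_l\}$) to be injective from the $n$-element set $L$ into the $(n-1)$-element set $\A$, a contradiction. The step I anticipate to be the most delicate is cleanly ruling out the mixed-type configuration on $L$; once that is eliminated via the common-neighbor count, the clique/pigeonhole finish is almost immediate, and the primality of $n-1$ is used precisely to forbid intermediate values of $|A_v|$.
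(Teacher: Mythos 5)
Your proposal is correct, and although it rests on the same pillars as the paper's proof --- Lemma~\ref{lem:upper_bound_bipartite} and Lemma~\ref{lem:lower_bound} for the two bounds, small explicit constructions for $n=2,3$, and Lemma~\ref{lem:degree} plus primality of $n-1$ to force $(|A_v|,|B_v|)\in\{(1,n-1),(n-1,1)\}$ in a hypothetical $(n-1\mid n)$-CIR --- the route to the final contradiction is genuinely different. The paper splits into three cases according to how many vertices of one part have a singleton $\A$-set, and its hardest case needs a fairly long chain of deductions that uses only the identity $|A_v||B_v|=\deg(v)$. You instead invoke the second conclusion of Lemma~\ref{lem:degree}, $|A_u\cap A_v|=|B_u\cap B_v|=1$ on edges, which the paper's case analysis never uses: it rules out same-type edges at once (two $(n-1)$-subsets of the $n$-set $\B$ cannot meet in exactly one element when $n\ge 4$, and two copies of $\A$ cannot either), so the types must alternate across edges; the common-neighbor count $n-2\ge 2$ then aligns the two types with the two sides of the bipartition, and the pigeonhole on $n$ singleton $\A$-sets drawn from the $(n-1)$-element set $\A$ finishes. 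Your route buys a shorter, more structural argument with a single clean dichotomy in place of three cases; the paper's route gets by with only the degree identity from Lemma~\ref{lem:degree}. Your treatment of $n=3$ via $\K^M_{3,3}\cong\C_6$ and Proposition~\ref{pr:simple_graphs} is also a tidy substitute for the paper's figure.
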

\begin{proof} 
Let $\K^M_{n,n} = (\V,\E)$ and
let $U = \{u_1,\ldots,u_n\}$ and $V = \{v_1,\ldots,v_n\}$
be two parts of $\V$ such that
\[
\E = \{(u_i,v_j) \colon 1\leq i \neq j \leq n\}.
\] 
By Lemma~\ref{lem:upper_bound_bipartite} we have 
\begin{equation} 
\label{eq:2}
\ttc(\K^M_{n,n}) \leq 2n.
\end{equation} 
Note that 
\[
\tto(\K^M_{n,n}) = |\E(\K^M_{n,n})| = n^2 - n = (n-1)n.
\]
Therefore, by Lemma~\ref{lem:lower_bound}, 
\begin{equation} 
\label{eq:KM}
\ttc(\K^M_{n,n}) \geq \min_{\al\be \geq n(n-1)} (\al + \be) = (n-1) + n = 2n-1.
\end{equation} 
When $n = 2,3$, the above lower bound on $\ttc$ is attained. Examples of 
$(n-1,n)$-CIRs of $\K^M_{n,n}$ when $n = 2,3$ 
are given in Fig.~\ref{fig:KMnn}. 
\vspace{-10pt}
\begin{figure}[H]
\centering
\includegraphics[scale=0.7]{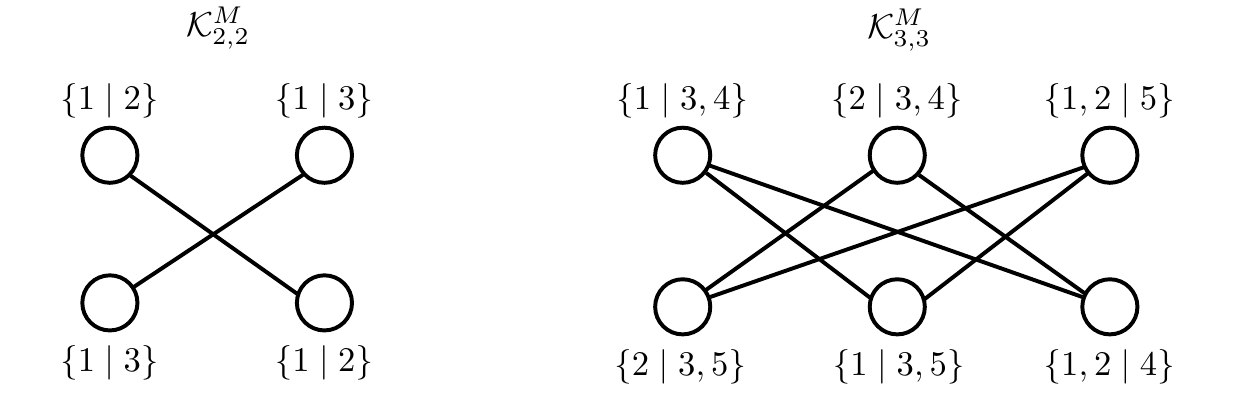}
\caption{A $(1,2)$-CIR of $\K^M_{2,2}$ (left)
and a $(2,3)$-CIR of $\K^M_{3,3}$ (right).}
\label{fig:KMnn}
\end{figure}
\vspace{-5pt}

It remains to show that if $n-1$ is an odd prime then $\ttc(\K^M_{n,n}) \neq 2n-1$.
Suppose, by contradiction, that $\ttc(\K^M_{n,n}) = 2n-1$. 
Then there must exist an $(n-1,n)$-CIR of $\K^M_{n,n}$.
Let $\A = \{a_1,\ldots,a_{n-1}\}$ and $\B = \{b_1,\ldots,b_n\}$. 
Note that every vertex of this graph has degree $n-1$. 
By Lemma~\ref{lem:degree}, for every vertex $v$, 
\[
|A_v||B_v| = \deg(v) = n - 1. 
\]
As $n-1$ is a prime number, we deduce that either $|A_v| = 1$
and $|B_v| = n - 1$ or $|A_v| = n - 1$ and $|B_v| = 1$. 
We consider the following three cases, distinguished by the number of vertices
that have only one $\A$-feature, and aim to obtain a contradiction in each
case:
\begin{itemize}
	\item \textbf{Case 1.} $|A_v| = n - 1$ and $|B_v| = 1$ for all $v \in \V = U \cup V$. 
		Since $A_{u_i} = \A$ for all $i \in [n]$ and there are no edges between
		these vertices $u_i$ elements, $B_{u_i} \cap B_{u_{j}} = \varnothing$ whenever $i \neq j$. 
		Similarly, $B_{v_i} \cap B_{v_j}$ whenever
			$i \neq j$. However, as $u_1$ is adjacent to $v_2,\ldots,v_n$, 
				these vertices must have the same $\B$-feature as $u_1$. We arrive at a contradiction. 
	\item \textbf{Case 2.} There exists one vertex, say
	$u_i$, satisfying $|A_{u_i}|$ = 1, while other vertices in the same part
		have $A_{u_j} = \A$, $j \neq i$. By Lemma~\ref{lem:degree},  
		$|B_{u_i}| = n - 1$ and $|B_{u_j}| = 1$ for $j \neq i$.
		Moreover, as $u_i$ is not adjacent to $u_j$ for $j \neq i$, 
		$B_{u_i} \cap B_{u_j} = \varnothing$. As $|\B| = n$ and $|B_{u_i}| = n - 1$, 
		this implies that $B_{u_j} = \B \setminus B_{u_i}$ for all $j \neq i$. 
		Since $A_{u_j} = \A$ for all $j \neq i$ as well, the corresponding elements $u_j$ must be 
		all adjacent, which is not true. We arrive at a contradiction.
	\item \textbf{Case 3.}  There exist two vertices, which we without loss of generality label as 
	$u_i$ and $u_j$, that are in the same part of the graph, and which satisfy 
	$|A_{u_i}| = |A_{u_j}| = 1$. Then by Lemma~\ref{lem:degree}, 
	$|B_{u_i}| = |B_{u_j}| = n - 1$. Since $n > 2$, $B_{u_i} \cap B_{u_j} \neq 
	\varnothing$. Therefore, $A_{u_i} \cap A_{u_j} = \varnothing$.
	Without loss of generality, let $A_{u_i} = \{a_i\}$ and $A_{u_j} = \{a_j\}$. 
	For any $h \in [n] \setminus \{i,j\}$, since $v_h$ is connected to 
	both $u_i$ and $u_j$, we deduce that $\{a_i,a_j\}$ is a subset of both 
	$A_{v_h}$. As $|A_{v_h}| \in \{1,n-1\}$, we deduce that $A_{v_h} = \A$, 
	for all $h \neq i,j$. Then $|B_{v_h}| = 1$ and $B_{v_h} \cap B_{v_k} =\varnothing$
	for every $h \neq k$, $h,k \in [n] \setminus \{i,j\}$. 
	We can set $B_{v_h} = \{b_h\}$ and $B_{v_k} = \{b_k\}$. 
	As $n -1 $ is an odd prime, $n \geq 4$. 
	Therefore, we can choose $h$ and $k$ such that $h, k, i,j$ are distinct.
	
	Since $v_h$ and $v_k$ are not adjacent to $v_i$, 
	and moreover, since $A_{v_h} = A_{v_k} = \A$, we deduce that 
	$B_{v_i} \cap \{b_h,b_k\} = \varnothing$. 
	Therefore, $|B_{v_i}| \leq n - 2$. Since $|B_{v_h}| \in \{1,n-1\}$, we deduce
	that $|B_{v_i}| = 1$. Similarly, $|B_{v_j}| = 1$. 
	We can set $B_{v_i} = \{b_i\}$ and $B_{v_j} = \{b_j\}$. 
	For any $r \neq i,j$, since $u_r$ is adjacent to $v_i$ and $v_j$, the set
	$\{b_i,b_j\}$ is a subset of $B_{u_r}$. Therefore, $|B_{u_r}| = n - 1$, 
	and hence, $|A_{u_r}| = 1$, for all $r \in [n]$. 
	By the pigeon hole principle, among the $n$
	vertices $u_1,\ldots,u_n$, there must be two distinct vertices, say $u_r$ and $u_s$, that 
	satisfy $A_{u_r} = A_{u_s}$. Moreover, as $|B_{u_r}| = |B_{u_s}| = n - 1$, 
	we must have $B_{u_r} \cap B_{u_s} \neq \varnothing$ as well.
	We obtain a contradiction, since the cointersection Condition is violated.  
\end{itemize}
Thus, if $n-1$ is an odd prime then $\ttc(\K^M_{n,n}) \neq 2n-1$.
Combining this fact with \eqref{eq:2} and \eqref{eq:KM}, we conclude that $\ttc(\K^M_{n,n}) = 2n$
in this case.
\end{proof} 
An obvious corollary of Proposition~\ref{pr:KMnn} is that there exists
infinitely many bipartite graphs where the lower bound $\ttc \geq
\min_{\al\be \geq \tto} (\al+\be)$ established in Lemma~\ref{lem:lower_bound} 
is not attained. 

\section{Algorithms for the Cointersection Model}

In what follows, we develop two algorithms for finding (exact and approximate) cointersection representations of a graph. 
The first algorithm is based on a transformation to instances of the Satisfiability Problem (SAT) and outputs an \emph{optimal} cointersection representation, which uses exactly $\ttc$ features.  
The second algorithm is based on the well known simulated annealing approach, which produces an \emph{approximate} cointersection representation of a graph. 
More specifically, this algorithm inputs $\G$, $\alpha$, and $\beta$, and outputs feature assignments to all vertices of the graph so as to maximize, as much as possible, the score of the representation, i.e. the number of pairs $(u,v)$ that satisfy the cointersection Condition. 

\subsection{Uniqueness of Optimal Cointersection Representations}
\label{subsec:uniqueness}

Before presenting the two algorithms, we briefly discuss the question of uniqueness of an optimal cointersection representation of a graph. Throughout our analysis, we tacitly assume that $\al \leq \be$ for all $(\al,\be)$-CIRs.

Two cointersection representations are considered \emph{equivalent} if one can be obtained from the other by possibly swapping the set of $\A$-features and the set of $\B$-features (only if
$|\A| = |\B|$), and by permuting features within each set.
A graph is said to be \emph{uniquely cointersectable} if all of its \emph{optimal} cointersection representations are equivalent. 
The issue of unique cointersection representations is of importance in practical applications, where different feature assignment algorithms may construct diverse solutions and where we would like to understand how many different solutions are possible. The related concept of \emph{uniquely intersectable} graphs was studied in~\cite{AlterWang1977, MahadevWang1999}. It was proved in~\cite[Thm.~3.2]{MahadevWang1999} that every 
\emph{diamond-free} graph is uniquely intersectable (more precisely, \emph{uniquely intersectable with respect to a multifamily}). Note that a diamond is obtained by removing one edge in $\mathcal{K}_4$.
The problem of finding a \emph{necessary and sufficient} condition for a graph to be uniquely intersectable is widely open.

Some examples of uniquely cointersectable graphs include: 
\begin{itemize}
	\item Cliques $\K_n$, $n \geq 2$, which have a unique $(1,1)$-CIR with all vertices having features $\{a_1 \mid b_1\}$, 
	\item $\K_n - e$, $n \geq 2$, where $e = (u,v)$ is an arbitrary edge. This graph has a unique $(1,2)$-CIR in which $u$ is 
	assigned the pair of features $\{a_1 \mid b_1\}$, $v$ is assigned $\{a_1 \mid b_2\}$, while all other
	vertices (if any) are assigned the set $\{a_1 \mid b_1, b_2\}$. 
	\item The path $\P_5$ has a unique $(2,2)$-CIR, where the vertices from $1$ to $5$ are respectively assigned the following	sets of features: $\{a_1 \mid b_1\}, \{a_1 \mid b_1, b_2\}, \{a_1, a_2 \mid b_2\}, \{a_2 \mid b_1, b_2\}$,
	and $\{a_2 \mid b_1\}$,
	\item The cycle $\C_4$ has a unique $(2,2)$-CIR, where the vertices from $1$ to $4$ are respectively assigned the following	sets of features: $\{a_1, a_2 \mid b_1\}, \{a_1 \mid b_1, b_2\}, \{a_1, a_2 \mid b_2\}, \{a_2 \mid b_1, b_2\}$. 
\end{itemize}

A graph may not have a unique cointersection representation, even if we restrict ourselves to \emph{optimal} $(\alpha, \beta)$ cointersection representations, where $\alpha$ and $\beta$ are fixed, and $\alpha + \beta = \ttc$. 
An example of two optimal $(2, 3)$-CIRs of the path $\P_7$ that are not equivalent is presented in Fig.~\ref{fig:not_unique}. In fact, we prove in Corollary~\ref{cr:not_unique} that \emph{every} path $\P_n$, $n \geq 4$, except
$\P_5$, is \emph{not} uniquely cointersectable. 
A similar result also holds for cycles, but we omit the proof due to lack of space.  
In fact, most paths have at least exponentially many nonequivalent optimal cointersection representations (Theorem~\ref{thm:path_exp}). 
Note that a path or a cycle, which is obviously diamond free, is always uniquely intersectable. 
These results suggest that uniquely cointersectable graphs are even scarcer than uniquely intersectable ones.
The problem of finding a necessary and/or sufficient condition for a graph to be uniquely cointersectable is also open. 

\begin{theorem} 
\label{thm:path_exp}
Every path $\P_n$ with $n \geq 6$ has at least $(\lceil \sqrt{n-1} \rceil - 1)!$ nonequivalent optimal cointersection representations.
\end{theorem}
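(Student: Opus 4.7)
The plan is to exhibit $(k-1)!$ optimal cointersection representations of $\P_n$, where $k:=\lceil\sqrt{n-1}\rceil$, by perturbing the first group of the canonical construction from Proposition~\ref{pr:simple_graphs} via permutations of $\{b_1,\ldots,b_{k-1}\}$, and then arguing that no two of these representations can be identified by any permutation of features or by the $\A\!\leftrightarrow\!\B$ swap available when $|\A|=|\B|$.

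For $n\ge 6$ (so $k\ge 3$), set $\beta:=k$ and $\alpha:=\lceil(n-1)/k\rceil$. A short case split on $n-1$ modulo $k$, together with $\tcG=\min_{\alpha\beta\ge n-1}(\alpha+\beta)$, gives $\alpha+k=\tcG$, so an $(\alpha\mid k)$-CIR is optimal; the same analysis shows that group~$1$ always contains exactly $k$ edges, and that the bound $n-1>(k-1)^2$ implicit in the definition of $k$, combined with $n\ge 6$, forces group~$2$ to have at least $k-1$ edges in every case (the tightest subcase being $\alpha=2$, in which $(n-1)-k\ge k-1$ is precisely the inequality $(k-1)^2<n-1$ in disguise). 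For each $\pi\in S_{k-1}$, define $R_\pi$ by assigning to group~$1$ the $\B$-sequence $b_{\pi(1)},b_{\pi(2)},\ldots,b_{\pi(k-1)},b_k$; to groups $2,\ldots,\alpha$ the canonical alternating (increasing/decreasing) $\B$-sequences from Proposition~\ref{pr:simple_graphs} starting at $b_k$; and to group~$i$ the $\A$-feature $a_i$. Because the $\B$-labels inside group~$1$ are still pairwise distinct and the $\A$-features differ across groups, the cointersection condition for $R_\pi$ is verified verbatim as in the proof of Proposition~\ref{pr:simple_graphs}, and $R_\pi$ uses exactly $\alpha+k=\tcG$ features.

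Now suppose $R_\pi\sim R_{\pi'}$ via $\sigma\in S_\alpha$ on $\A$, $\tau\in S_k$ on $\B$, and (only when $\alpha=k$) a possible $\A\!\leftrightarrow\!\B$ swap. Without a swap: groups $2,\ldots,\alpha$ are identical in $R_\pi$ and $R_{\pi'}$, so $\tau$ must fix every $\B$-feature appearing there; by the setup this set has size at least $k-1$, and a permutation of $k$ elements fixing $k-1$ of them must be the identity, so $\tau=\mathrm{id}$. The ordered assignment $a_i\mapsto$ group~$i$ in both representations similarly forces $\sigma=\mathrm{id}$, whence $\pi=\pi'$. A swap is ruled out structurally: every $R_\pi$ has an $\A$-sequence that is constant on the $k\ge 3$ positions of group~$1$, while the $\B$-sequence of any $R_{\pi'}$ admits no constant substring of length exceeding $2$ (all constant runs in the $\B$-sequence occur only at group boundaries and have length exactly~$2$); since relabeling preserves the multiset of run-lengths, no combination of swap and permutation can reconcile the two patterns.

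The principal obstacle is the rigidity step above: the conclusion $\tau=\mathrm{id}$ depends crucially on at least $k-1$ of the $k$ $\B$-features surviving in the unperturbed groups $2,\ldots,\alpha$, and this is exactly what the hypothesis $n\ge 6$ is designed to secure (for $n=5$ the path is uniquely cointersectable, so a similar argument must and does fail there, and the parameter-setup lemma is precisely the place where $n\ge 6$ is consumed).
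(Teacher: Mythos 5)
Your proposal is correct in substance, and its construction is essentially the paper's: both build a factorial-sized family of optimal representations by permuting the $\B$-labels within a single group of the canonical path representation of Proposition~\ref{pr:simple_graphs} (you permute the first group and keep the later groups canonical; the paper keeps the first group fixed and lets a permutation $\sigma$ of $\{1,\ldots,\be-1\}$ drive the later groups), and both then argue pairwise nonequivalence. Where you genuinely differ is in the nonequivalence mechanism and the parameter setup. The paper, for $\sigma\neq\sigma'$, exhibits two specific vertices $u,v$ whose \emph{number} of common features differs in the two representations --- an invariant under feature permutations and the $\A\!\leftrightarrow\!\B$ swap --- at the price of a two-case analysis on where the permutations first disagree. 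You instead prove a rigidity statement: any equivalence must act trivially because the unperturbed groups pin down $\tau$ (and $\sigma$), so $\pi=\pi'$, and the swap is excluded structurally. Your explicit choice $(\al,\be)=(\lceil (n-1)/k\rceil,\,k)$ with $k=\lceil\sqrt{n-1}\rceil$, verified optimal arithmetically, also treats $n=6,7$ uniformly, whereas the paper handles these by explicit example and, for $n\ge 8$, first massages an arbitrary optimal representation into a balanced one. The rigidity route is arguably cleaner, but only once the step below is made precise.

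The one soft spot is the inference ``groups $2,\ldots,\al$ are identical, so $\tau$ must fix every $\B$-feature appearing there.'' Identical vertex assignments only force $\tau$ to stabilize each set $B_v$ \emph{setwise}, not pointwise; in principle a permutation could transpose two features that only ever co-occur. What rescues the claim is the overlap structure you have built but not invoked: the group-$1$/group-$2$ boundary vertex carries the singleton $\{b_k\}$, and the interior vertices of group $2$ (which, as you checked, has at least $k-1$ edges) carry the overlapping consecutive pairs $\{b_k,b_{k-1}\},\{b_{k-1},b_{k-2}\},\ldots$; fixing $b_k$ and chaining through these pairs forces $\tau$ to fix $b_2,\ldots,b_k$ pointwise, and then $b_1$ as the only remaining feature, so $\tau=\mathrm{id}$ (and $\pi=\pi'$ follows from the $\B$-sets along group $1$ alone, without needing $\sigma$). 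State this chain explicitly. Likewise, the swap elimination via run lengths of edge label sequences needs a line translating the vertex-level definition of equivalence into a statement about those edge patterns; it is quicker to observe that any interior vertex of group $1$ has $|A_v|=1$ and $|B_v|=2$ in every $R_\pi$, while a swap equivalence would require these two cardinalities to be interchanged. With these repairs your argument is complete, and the count $(k-1)!$ matches the theorem.
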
 
\begin{proof}
The main idea behind the proof is to construct a list of at least $(\lceil \sqrt{n-1} \rceil - 1)!$ optimal cointersection representations 
of $\P_n$, and then show that for every pair of representations, there exist two vertices whose sets of assigned
features intersect in a nonequivalent manner. 

Two nonequivalent optimal $(2,3)$-cointersection representations of $\P_7$ are shown in Fig.~\ref{fig:not_unique}. 
If we delete the last vertex and edge in the paths, we obtain two nonequivalent representations for $\P_6$.   
\vspace{-10pt}
\begin{figure}[htb]
\centering
\includegraphics[scale=0.8]{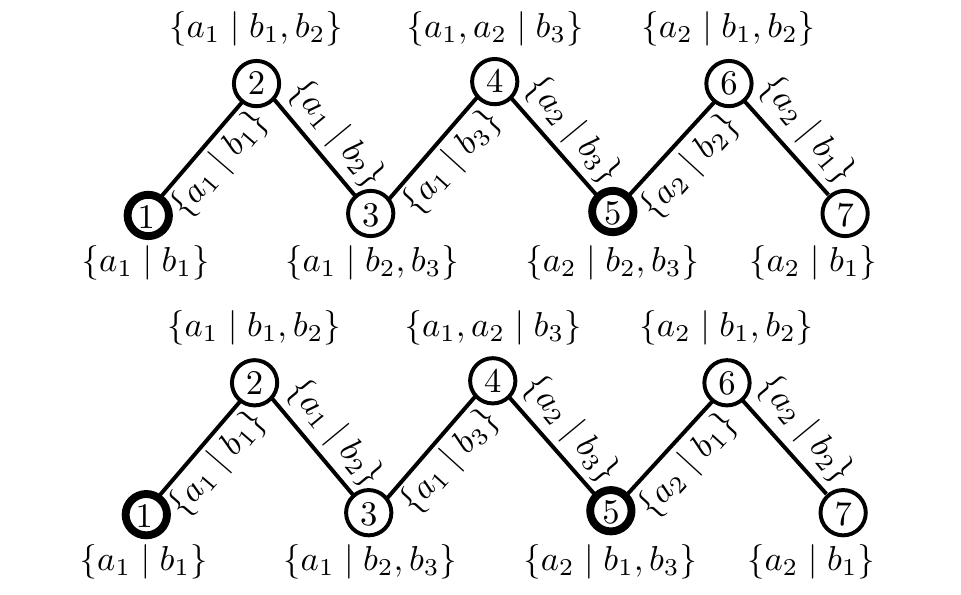}
\caption{An illustration of two nonequivalent, optimal $(2,3)$-CIRs of the path $\P_7$. In the first (top) representation, vertex $1$ and vertex $5$ do not share any features,
while in the second (bottom) representation, they do share one feature, $b_1$.}
\label{fig:not_unique}
\end{figure}

Now suppose that $n \geq 8$ and that we have an optimal $(\al,\be)$-CIR of $\P_n$. If $\be \geq \al + 2$, then $(\al+1)(\be-1) > \al\be$, and hence by Proposition~\ref{pr:simple_graphs}, there is another optimal $(\al+1,\be-1)$-CIR of $\P_n$. We can repeat this argument to obtain an optimal representation with $\al \leq \be \leq \al + 1$ (Note that this argument also reveals that for paths, there always exists a balanced optimal cointersection representation). By Lemma~\ref{lem:lower_bound}, $\al(\al+1) \geq \al\be \geq \tto(\P_n) = n - 1 \geq 7$. Hence, $\beta \geq \al \geq 3$. We also have $\be \geq \lceil \sqrt{n-1} \rceil$. 

We describe next a list of $(\be-1)!$ $(\al,\be)$-cointersection representations of $\P_n$ and proceed to prove
that the representations are pairwise nonequivalent. Each of these representations corresponds to a particular permutation
$\sigma$ of the set $\{1,2,\ldots,\be-1\}$, denoted by $\cR_\sigma$.  
Following the proof of Proposition~\ref{pr:simple_graphs} for paths,
we partition the set of $n-1$ edges into $\al$ groups of $\be$ consecutive edges each, except for possibly the last group, which may contain less than $\be$ edges if $\al\be > n - 1$. In all representations, we assign
$\be$ pairs of features $\{a_1,b_1\}, \{a_1,b_2\},\ldots,\{a_1,b_\be\}$ to the \emph{first} group of $\be$ consecutive edges in that order. 
In the representation $\cR_\sigma$, we continue to assign $\be$ pairs of features $\{a_2,b_\beta\}, \{a_2,b_{\sigma(\be-1)}\}, \{a_2,b_{\sigma(\be-2)}\}, \ldots,\{a_1,b_{\sigma(1)}\}$ to the next group of $\be$ consecutive edges in that order. Similarly, the third group of edges is assigned pairs of features $(a_3,b_{\sigma(1)}), 
(a_3,b_{\sigma(2)}), \ldots$, in $\cR_\sigma$, and so forth.
In general, the rule is to assign different features $a_i$ to different groups of edges, and
to assign the features $b_j$ in such a way that the \emph{last edge} of one group is assigned 
the same $b_j$ as the \emph{first edge} of the following group.   
This process is continued until all edges are assigned
one pair of features each. Upon completion of this procedure, each vertex is assigned the union of the sets of features assigned to its adjacent edges. According to the argument used in the proof of Proposition~\ref{pr:simple_graphs} for paths, 
each $\cR_\sigma$ represents an $(\al,\be)$-cointersection representation of $\P_n$. 

It remains to prove that for two different permutations $\sigma$ and $\sigma'$ of $\{1,2,\ldots,\be-1\}$, there exist two distinct vertices
$u$ and $v$ whose sets of assigned features intersect differently in the two representations. More specifically, $u$ lies within the first group of vertices and $v$ lies within the second group of vertices. 
Let $j \in [\be-1]$ be the largest index satisfying
$z \define \sigma(j) \neq t \define \sigma'(j)$. Then $y \define
\si(j+1) = \si'(j+1)$. Note that if $j = \be-1$, one may set
$y = \beta$. 
Without loss of generality, let us also assume that $t > z$. 
We select $v$ (see Fig.~\ref{fig:uv1} and Fig.~\ref{fig:uv2}) to be the vertex adjacent to the two consecutive edges in the second group which are assigned features $\{a_2,b_y\}$ and $\{a_2,b_z\}$
in $\cR_\si$. In $\cR_{\si'}$, $v$ is adjacent to two edges with assigned features $\{a_2,b_y\}$ and $\{a_2,b_t\}$. 
As $\al \geq 3$, both groups have $\beta$ edges and vertices $u$ and $v$ as described above always exist. 

We consider two cases which correspond to different choices of $u$. It suffices to show that in both cases, $u$ and $v$ have a different number of common features in $\cR_\si$
and $\cR_{\si'}$. 

\textbf{Case 1.} $t = z + 1$. 
We select $u$ (see Fig.~\ref{fig:uv1}) as the vertex adjacent to the two consecutive edges in the \emph{first} group that are assigned features $\{a_1,b_t\}$ and $\{a_1,b_{t+1}\}$ in both $\cR_\si$ and $\cR_{\si'}$. Note that $t \leq \be-1$, and hence
$t+1 \leq \be$. 
\begin{figure}[htb]
\centering
\includegraphics[scale=0.85]{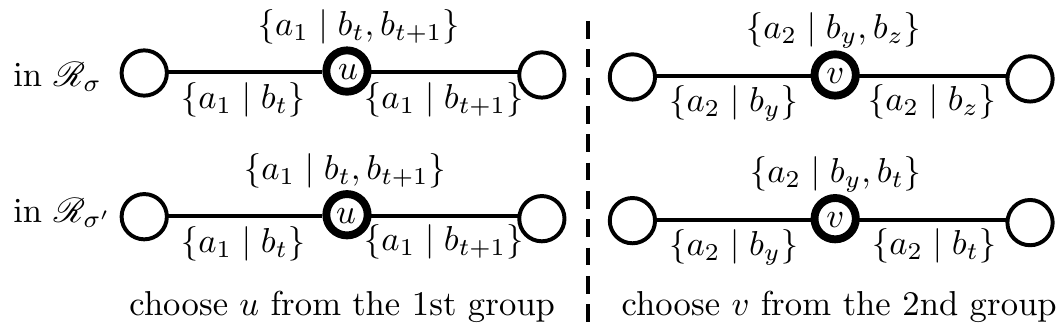}
\caption{(Case 1) The feature sets of $u$ and $v$ with respect to $\cR_\si$ and $\cR_{\si'}$.}
\label{fig:uv1}
\end{figure}
Since $y \notin \{z,t\}$, we consider the following two sub-cases. 
If $y < z$ or $y > t + 1,$ then in $\cR_\si$ the vertices $u$ and
$v$ do not share any features, while in $\cR_{\si'}$, 
they do share one common feature, namely $b_t$. 
If $y =  t + 1,$ then in $\cR_\si$ the vertices $u$ and
$v$ share precisely one feature, namely $b_{t+1}$, 
while in $\cR_{\si'}$, they share two features, $b_t$
and $b_{t+1}$. 

\textbf{Case 2.} $t > z + 1$. 
We select $u$ (see Fig.~\ref{fig:uv2}) as the vertex adjacent to the two consecutive edges in the \emph{first} group that are assigned $\{a_1,b_z\}$ and $\{a_1,b_{z+1}\}$ in both $\cR_\si$ and $\cR_{\si'}$. 
\begin{figure}[htb]
\centering
\includegraphics[scale=0.85]{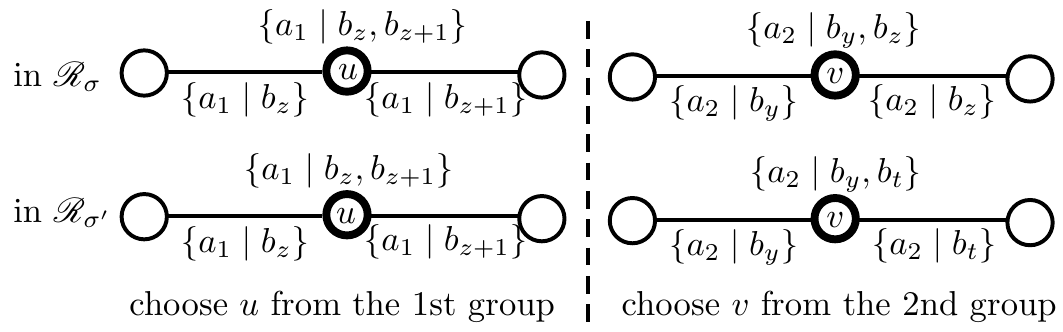}
\caption{(Case 2) The feature sets of $u$ and $v$ with respect to $\cR_\si$ and $\cR_{\si'}$.}
\label{fig:uv2}
\end{figure}
If $y < z$ or $y > z + 1$ then in $\cR_\si$ the vertices $u$ and
$v$ share one feature, namely $b_z$, while in $\cR_{\si'}$, 
they do not share any features. 
If $y =  z + 1,$ then in $\cR_\si$, the vertices $u$ and
$v$ share precisely two features, namely $b_z$ and $b_{z+1}$, 
while in $\cR_{\si'}$, they share only one feature, namely $b_{z+1}$.

This completes the proof.
\end{proof}

\vspace{-3pt}
\begin{corollary}
\label{cr:not_unique}
None of the paths $\P_n$, $n \geq 4$, except for $\P_5$, is uniquely cointersectable. 
\end{corollary}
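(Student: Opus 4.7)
The plan is to split on the size of $n$. The case $n \geq 6$ is absorbed directly by Theorem~\ref{thm:path_exp}, which has already been proved. The only path that the corollary does not cover via the theorem is $n = 4$, and the path $\P_5$ is exactly the one listed in Section~\ref{subsec:uniqueness} as uniquely cointersectable, so it is legitimately excluded. This case analysis is clean and presents no real obstacle beyond a single hands-on construction at $n=4$.

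For $n \geq 6$, I would simply observe that $\lceil \sqrt{n-1}\rceil \geq 3$, so Theorem~\ref{thm:path_exp} produces at least $(\lceil \sqrt{n-1}\rceil - 1)! \geq 2! = 2$ pairwise nonequivalent optimal CIRs of $\P_n$, immediately ruling out unique cointersectability.

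The remaining case $n = 4$ I would handle by exhibiting two optimal CIRs of different shapes. First I would pin down the cointersection number: since $\tto(\P_4) = 3$, Corollary~\ref{cr:lower_bound} sandwiches $\ttc(\P_4)$ between $\lceil 2\sqrt{3}\rceil = 4$ and $1+3 = 4$, so $\ttc(\P_4) = 4$. Under the convention $\al \leq \be$ for an optimal $(\al,\be)$-CIR, the constraints $\al+\be = 4$ and $\al\be \geq \tto(\P_4) = 3$ (Lemma~\ref{lem:lower_bound}) leave only the two shapes $(1,3)$ and $(2,2)$. Both are realizable: a $(1,3)$-CIR arises from Lemma~\ref{lem:upper_bound} applied to the natural $3$-feature edge-clique cover of $\P_4$ (attach a single common $\B$-feature to every vertex), while a $(2,2)$-CIR exists by Proposition~\ref{pr:simple_graphs} since $2\cdot 2 \geq 3$. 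The one point worth flagging as an obstacle is verifying nonequivalence of these two CIRs, but this is immediate from the definition in Section~\ref{subsec:uniqueness}: permuting features within a class and (when $|\A|=|\B|$) swapping the two classes both preserve the unordered multiset $\{|\A|,|\B|\}$, and $\{1,3\}\neq\{2,2\}$, so the two constructions lie in distinct equivalence classes. Hence $\P_4$ is not uniquely cointersectable, completing the corollary.
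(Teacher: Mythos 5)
Your proposal is correct and follows essentially the same route as the paper: the case $n \geq 6$ is dispatched by Theorem~\ref{thm:path_exp} exactly as the paper does, and $n=4$ is handled by exhibiting an optimal $(1,3)$-CIR and an optimal $(2,2)$-CIR, which is the paper's argument as well. The only cosmetic differences are that you derive the $(1,3)$-CIR from Lemma~\ref{lem:upper_bound} rather than Proposition~\ref{pr:simple_graphs} and that you spell out the optimality computation $\ttc(\P_4)=4$ and the shape-invariance argument for nonequivalence, details the paper leaves implicit.
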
 
\begin{proof} 
By Proposition~\ref{pr:simple_graphs}, $\P_4$ has a $(1,3)$-CIR as well as a $(2,2)$-CIR, both of which are optimal. Hence, $\P_4$ is not uniquely cointersectable. For $n \geq 6$, according to Theorem~\ref{thm:path_exp}, $\P_n$ has at least $2 = (\lceil \sqrt{6-1} \rceil - 1)!$ 
nonequivalent optimal cointersection representations, and is hence not uniquely cointersectable. 
\end{proof}

\subsection{Feature Assignments via SAT Solvers}
\label{subsec:SAT}

For arbitrary $\alpha$ and $\beta$, it is an NP-complete problem to determine if 
an $(\al,\be)$-CIR exists; indeed, when $\al = 1$, the problem becomes 
whether there exists an intersection representation that uses $\be$ features,
which is known to be NP-complete~\cite{Orlin1977}. We discuss below a means of determining
the cointersection number in a constructive manner, which also results in feature assignments for
the vertices. The idea is to restate the cointersection problem as a Satisfiability Problem (SAT). 

Given $\al$, $\be$, and a graph $\G$ on $n$ vertices, we construct an instance
of a SAT problem that is satisfiable if and only if there exists an $(\al,\be)$-CIR of $\G$. 
An optimal pair $(\al,\be)$, therefore, can be determined via a simple binary search.
We use the variables $x_{u,a}$ and $y_{u,b}$, for $u \in [n]$, $a \in [\al]$, 
$b \in [\beta]$, where $x_{u,a}=1$ and $y_{u,b} = 1$ mean that the vertex $u$ is assigned a feature $a \in \A=[\al]$ and a feature $b \in \B = [\be]$,
respectively. For each edge $(u,v)$, we want the formula \vspace{-3pt}
\begin{equation} 
\label{eq:edge1}
\Big( \vee_{a \in [\al]} (x_{u,a} \wedge x_{v,a}) \Big) \wedge
\Big( \vee_{b \in [\beta]} (y_{u,b} \wedge y_{v,b}) \Big) \vspace{-3pt}
\end{equation} 
to be satisfiable, which is equivalent to the requirement that $u$ and $v$ have
some common features $a \in \A$ and $b \in \B$. To turn this formula into a conjunctive 
form, we introduce the variable $A_{u,v,a}$ and add one more requirement that
$A_{u,v,a} \leftrightarrow (x_{u,a} \wedge x_{v,a})$, which stands for \vspace{-3pt}
\begin{equation} 
\label{eq:Auva}
(\overline{A_{u,v,a}} \vee x_{u,a}) \wedge (\overline{A_{u,v,a}} \vee x_{v,a})
\wedge (A_{u,v,a} \vee \overline{x_{u,a}} \vee \overline{x_{v,a}}). \vspace{-3pt}
\end{equation} 
Similarly, we include $B_{u,v,b} \leftrightarrow (y_{u,b} \wedge y_{v,b})$, 
which stands for \vspace{-3pt}
\begin{equation} 
\label{eq:Buvb}
(\overline{B_{u,v,b}} \vee y_{u,b}) \wedge (\overline{B_{u,v,b}} \vee y_{v,b})
\wedge (B_{u,v,b} \vee \overline{y_{u,b}} \vee \overline{y_{v,b}}). \vspace{-3pt}
\end{equation} 
One may hence rewrite \eqref{eq:edge1} as \vspace{-3pt}
\begin{equation} 
\label{eq:edge2}
( \vee_{a \in [\al]} A_{u,v,a}) \wedge ( \vee_{b \in [\beta]} B_{u,v,b}). \vspace{-3pt}
\end{equation} 
If $(u,v)$ is not an edge, we introduce the variables $C_{u,v}$ and $D_{u,v}$
and the following clauses \vspace{-3pt}
\begin{equation}
\label{eq:CD}
\overline{C_{u,v}} \vee \overline{D_{u,v}},  \vspace{-3pt}
\end{equation} 
\begin{equation}
\label{eq:cx}
C_{u,v} \vee \overline{x_{u,a}} \vee \overline{x_{v,a}}, \text{ for every } a\in [\al], \vspace{-3pt}
\end{equation} 
\begin{equation}
\label{eq:dy}
D_{u,v} \vee \overline{y_{u,b}} \vee \overline{y_{v,b}}, \text{ for every } b\in [\be].\vspace{-3pt} 
\end{equation} 
These clauses impose the condition that $u$ and $v$ either have no
common feature in $\A = [\al]$ or have no common feature in $\B=[\be]$.
Using \eqref{eq:Auva}--\eqref{eq:dy}, we can now create an instance of SAT in the conjunctive normal form (CNF), which may be solved by Minisat~\cite{EenSorensson2003}.
The interested reader is referred to~\cite{BergJarvisaloICDMW2013} for a related discussion on intersection representations.  

\begin{example}
\label{ex:SAT_heurisctic}
To create a graph with a ``ground truth'' cointersection representation, we start off by fixing $\alpha = 4$, 
$\beta = 5$, and randomly assign two subsets $A_u \subseteq \{a_1,\ldots,a_4\}$ and 
$B_u \subseteq \{b_1,\ldots,b_5\}$ to each vertex $u \in \V = [11]$. 
The feature sets of the vertices are given in the second and the third columns of 
Table~\ref{tab:truth_SAT_Heuristic}. 
The edges are then created according to the cointersection Condition. The graph is depicted in 
Fig.~\ref{fig:example}. 
\vspace{-10pt}
\begin{figure}[htb]
\centering
\includegraphics[scale=0.9]{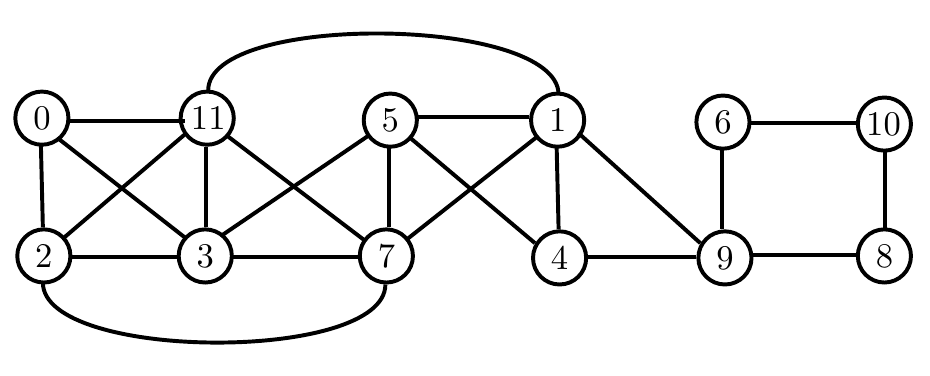}
\caption{A graph on $12$ vertices whose edges are generated by the feature assignment given in 
the second and the third columns of Table~\ref{tab:truth_SAT_Heuristic}, according to the cointersection
Condition.}
\label{fig:example}
\end{figure}
\begin{table}[htb]
\centering
\begin{tabular}{|l||l|l||l|l||}
\hline
Vertex & $A_v$ & $B_v$ & $A^{\text{SAT}}_v$ & $B^{\text{SAT}}_v$\\
\hline
0 & $a_0$ & $b_1$ & $a_2$ & $b_1$\\
\hline
1 & $a_0a_2$ & $b_0b_2$ & $a_0a_1a_2$ & $b_0b_2$\\
\hline
2 & $a_2$ & $b_1b_3$ & $a_2$ & $b_1b_3$\\
\hline
3 & $a_2a_3$ & $b_1b_3$ & $a_2$ & $b_1b_4$\\
\hline
4 & $a_0$ & $b_2b_3$ & $a_0a_1$ & $b_2$\\
\hline
5 & $a_0a_3$ & $b_0b_3$ & $a_1a_2$ & $b_2b_4$\\
\hline
6 & $a_1$ & $b_2b_3$ & $a_0a_1$ & $b_3$\\
\hline
7 & $a_2a_3$ & $b_0b_3b_4$ & $a_2$ & $b_0b_3b_4$\\
\hline
8 & $a_1$ & $b_0b_1$ & $a_0a_1$ & $b_1$\\
\hline
9 & $a_0a_1$ & $b_1b_2$ & $a_0$ & $b_0b_1b_2b_3$\\
\hline
10 & $a_1$ & $b_0b_3b_4$ & $a_1$ & $b_1b_3$\\
\hline
11 & $a_2$ & $b_0b_1b_3$ & $a_2$ & $b_0b_1$\\
\hline
\end{tabular}
\vspace{3pt}
\caption{Two different cointersection representations of the graph depicted in Fig.~\ref{fig:example}.
The sets $A_v$ and $B_v$ correspond to the random feature assignment that generates this graph.
The sets $A^{\text{SAT}}_v$ and $B^{\text{SAT}}_v$ correspond to the output of the SAT-based exact
algorithm developed in Section~\ref{subsec:SAT}.
 }
\label{tab:truth_SAT_Heuristic}
\end{table} 

The exact algorithm based on a SAT solver described in Section~\ref{subsec:SAT} reveals that $\tcG = 8$ and provides a $(3,5)$-cointersection representation as given
in the forth and fifth columns of Table~\ref{tab:truth_SAT_Heuristic}. 
In this case, the representation found by the algorithm turns out to be more compact than 
the ``ground truth'', which is often the case when we test with graphs generated from a random feature assignment.
Note that to visually compare two different representations, we relabel the features of one representation
in a way that maximizes the averaged Jaccard similarity between the sets of features assigned to each vertex
in two representations. Relabeling of $\A$-features and $\B$-features are performed separately. 
Here, the Jaccard similarity between the two sets $S$ and $T$ is defined as $|S \cap T|/|S \cup T|$. 
\end{example}

\subsection{A Simulated Annealing Algorithm for Approximate Cointersection Representation Inference}
\label{subsec:MCMC}

It is important to have approximate cointersection representations of a graph, especially when the 
graph is constructed from a real world data set, where data is usually noisy and an exact representation is, therefore, not necessary. 
Moreover, for large graphs, an approximate representation may still provide insight into the structure of the data, 
without over-representing the graphs with too many features. In this subsection, we present a randomized algorithm 
based on simulated annealing that produces an approximate $(\al,\be)$-cointersection representation of a graph, for any 
fixed pair $(\al, \be)$ given as an input.  
We also illustrate an applications of the algorithm to a real world network and discuss the structure of overlapping communities induced by the output representation which coincides with the ground truth. 
\begin{figure}[htb]
\centering
\fbox{
\parbox{3.4in}{
\centerline{\bf A Randomized Algorithm}
\begin{algorithmic}[1]
\STATE \textbf{Input:} A graph $\G$, integer parameters $\al, \be$, mixing parameter $c$, number of rounds $N$;
\STATE \textbf{Initialization:} 
\begin{itemize}
	\item Assign to all $v \in \V$ nonempty sets $A_v \subseteq \A=\{a_1,\ldots,a_\al\}$
and $B_v \subseteq \B = \{b_1,\ldots,b_\be\}$, chosen uniformly at random; 
	\item Initially, let both $\L$ and $\L_{\max}$ denote the chosen random feature assignments;
\end{itemize}
\REPEAT
\STATE Choose a vertex $u \in \V$ uniformly at random; 
\STATE Select $\varnothing \neq A'_u \subseteq \A$ and $\varnothing \neq B'_u \subseteq \B$ at random;
\STATE Let $\L' \leftarrow \L$ by assigning $A'_u$ and $B'_u$ to $u$;
\STATE Set $\L = \L'$ with probability $\min\{1,e^{c\big(s(\L') - s(\L)\big)}\}$;  
\IF {$\L$ is replaced by $\L'$ and $s(\L') > s(\L_{\max})$}
			\STATE Set $\L_{\max} = \L'$;
\ENDIF
\UNTIL{the loop has run for $N$ rounds;}
\STATE \textbf{Output:} $\L_{\max}$;
\end{algorithmic}
}
}
\caption{A simulated annealing algorithm for determining approximate cointersection representations of graphs.
The score $s(\L)$ counts the number of edges/non-edges of $\G$
that match the feature assignment $\L$ according to the Cointersection Condition.}
\label{fig:MCMC}
\end{figure}

The randomized algorithm (Fig.~\ref{fig:MCMC}) first assigns to each vertex $v \in \V$ a random
set of $\A$-features, namely $A_v$, and a random set of $\B$-features, namely $B_v$, both of which should be nonempty. 
This is referred to as the feature assignment $\L$.
Subsequently, it enters a loop of $N$ rounds, where $N$ is set to $b\,n\log(n)$ with some constant $b$. 
In each round, it chooses a random vertex $u$ and generates two random sets $A'_u \subseteq \A$ and $B'_u \subseteq \B$. 
Let $\L'$ be the feature assignment obtained from $\L$ by replacing $A_u$ by $A'_u$ and $B_u$ by $B'_u$. 
The \emph{score} $s$ of any feature assignment $\L$ is defined as the number of edges/non-edges of the graphs that
match $\L$, according to the Cointersection Condition. 
If $s(\L') > s(\L)$ then we set $\L := \L'$. Otherwise, we do it with probability $e^{c\big(s(\L') - s(\L)\big)}$. 
We usually set $c$ to be a constant, for example, $c = 10$ in our subsequent examples. 
For a more detailed discussion of the role of $c$ in the convergence speed of the underlying Markov chain, 
the reader may refer to the work of Tsourakakis~\cite{TsourakakisWWW2015} on intersection representation of graphs. 
At any time, $\L_{\max}$ records the feature assignment with maximum score seen so far.  

\begin{example}  
\label{ex:Karate}

We consider the social network of friendships among 34 members of an university-based Karate club, introduced by Zachary~\cite{Zachary1977}. Each individual is represented by a node in the
network and two nodes are joined by an edge if and only if the two corresponding individuals were
consistently observed to interact outside the normal activity time of the club (Fig.~\ref{fig:Karate}). As a result of a dispute between the instructor (Node 1) and the club president (Node 34), the members of the clubs were split into two groups, one supporting the president and the other 
supporting the instructor. This fission naturally induced two communities inside the club, corresponding to the aforementioned groups. As 
some form of ``the ground truth'' community structure is known, this data set has become a well known benchmark for community detection algorithms. 

\vspace{-5pt}
\begin{figure}[htb]
\centering
\includegraphics[scale=0.65]{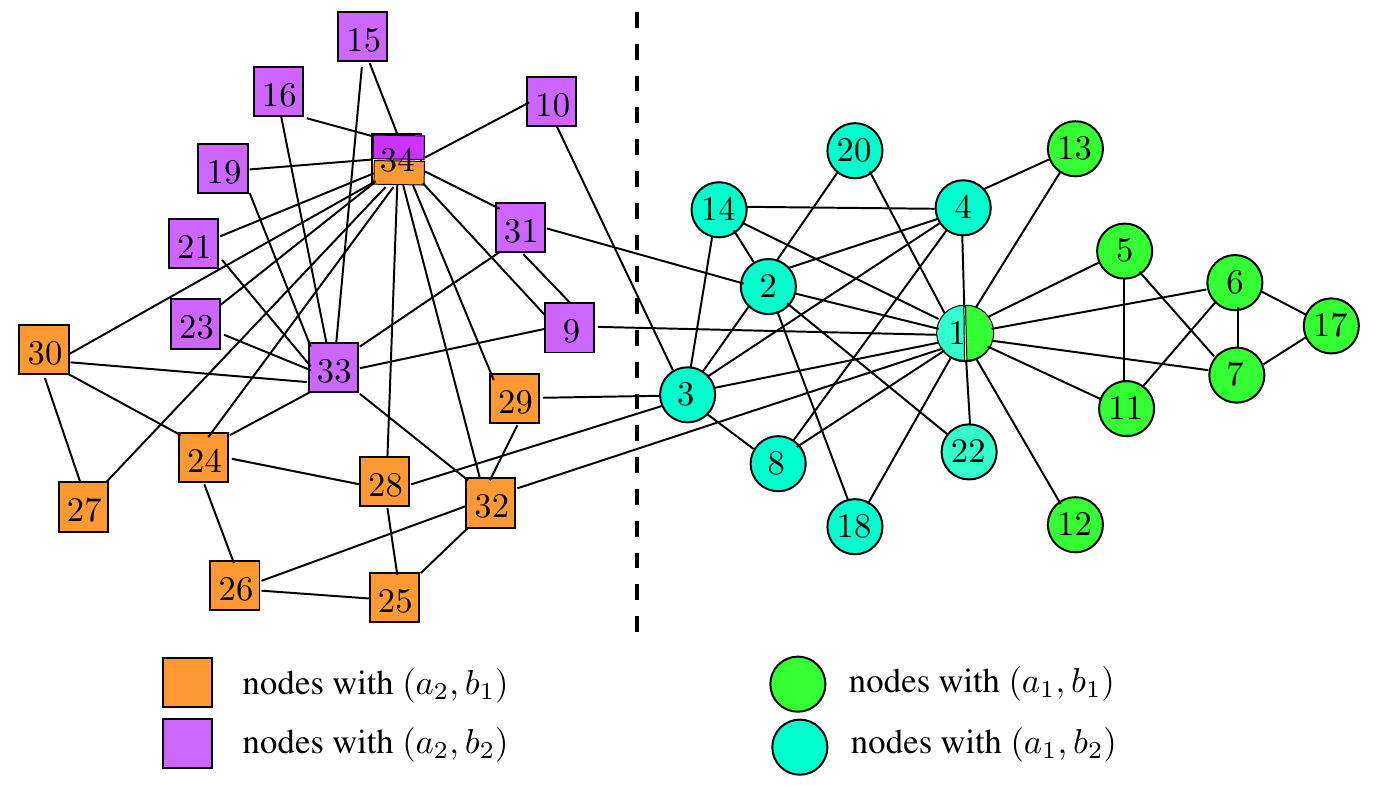}
\caption{The social network of friendships in a Karate club. The members of the club were 
naturally divided into two groups, the one on the left supporting the president (Node 34), and
the one on the right supporting the instructor (Node 1). Given $\al = \be = 2$ as input parameters, the 
randomized algorithm recovered a community structure, with two disjoint communities 
which correspond exactly to the two groups of supporters as discussed. 
But the algorithm provided more information, as within each community two further overlapping sub-communities, marked by different
colors, where identified. The only overlapping was in terms of Node 34 and Node 1, marked with a mix of two colors, correspond to the club president and the instructor. This suggests that there were two sub communities within each community held together by the president and the instructor.}
\label{fig:Karate}
\end{figure}

Applying the randomized algorithm to this network, with $\al = \be = 2$, a community structure
is revealed as illustrated in Fig.~\ref{fig:Karate}. The set of nodes with feature $a_1$ corresponds to the supporters of the instructor (Node 1), while the set of nodes with feature $a_2$ corresponds to the supporters of the club president (Node 34). 
Each of these two sets is further divided into overlapping sub-communities, marked by different
colors, where the overlapping nodes, marked with a mix of two colors, correspond to the club president and the instructor. 
Thus, in this case, the algorithm produces an ``error-free'' result if we look at communities
defined via features $a_1$ and $a_2$. We refer to these as the $\A$-communities. 

The communities induced by the $\B$-features, referred to as the $\B$-communities, are $\{1,5,6,7,11,12,13,17,24,25,26,27,28,29,30,32,34\}$ and $\{1,2,3,4,8,9,10, 14, 15, 16, 18, 19, 20, 21, 22, 23, 31, 33, 34\}$. Each of these communities comprises nodes from both two $\A$-communities. This result is a consequence of the way we define the cointersection model: it still allows one to identify shared features of individuals not necessarily within the same community; furthermore, if $\al = \be = 2$, the community structure dictated by the randomized algorithm usually looks like an overlapping \emph{grid}, as shown in Fig.~\ref{fig:cir}. Each greed may define communities of potentially different relevance; if the dataset does not have a grid-like community structure, some communities detected by the algorithm may require more elaborate interpretations.  

\vspace{-10pt}
\begin{figure}[htb]
\centering
\includegraphics[scale=0.7]{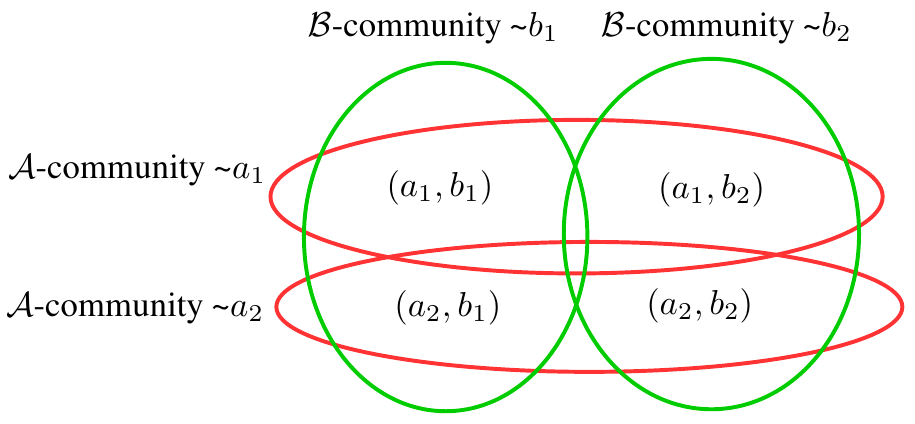}
\caption{Rough sketch of a typical community structure induced by an approximate $(2,2)$-cointersection representation of a graph. There are $4 = \al + \be$ (possibly overlapping) communities corresponding to vertices that are assigned a particular $\A$- or $\B$-feature. There are also 
$4 = \al\be$ (possibly overlapping) tighter-knit sub-communities, each of which consists of nodes that are assigned a particular pair of features $(a_i,b_j)$, where $a_i \in \A$ and $b_j \in \B$.}
\label{fig:cir}
\end{figure}
\end{example} 

\begin{remark}
Note that if we set $\al = 1$ then the randomized algorithm coincides with the 
algorithm developed in Tsourakakis's work~\cite{TsourakakisWWW2015} for intersection
representation. In Example~\ref{ex:Karate}, if we set $\al  = 1$ and $\be = 2$, then the algorithm
also outputs two communities that correspond perfectly to the ground truth.    
\end{remark}

\section{Extension to General Boolean Functions}
\label{sec:Boolean}

We extend the bounds developed for the cointersection model
in Section~\ref{sec:upper_bounds}, which is based on the AND Boolean function, to cater to models based on more general Boolean functions.

Let $f=f(x_1,x_2,\ldots,x_r)$ be a Boolean function in the \emph{full} disjunctive normal form. In other words, the corresponding logical formula
of the Boolean function is a disjunction $(\vee)$ of one or more conjunctions 
$(\wedge)$ of one or more literals, where each variables appears exactly once
in every clause. Some examples are $f(x_1, x_2, x_3) = x_1 \vee (x_2 \wedge x_3)$ and $f(x_1, x_2, x_3, x_4) = (x_1 \wedge x_2) \vee (\neg x_1 \wedge x_3 \wedge x_4)$. 
We first discuss the meanings of the AND $(\wedge)$ operator, the 
OR $(\vee)$ operator, and the NEGATION $(\neg)$ operator, and then proceed
to describe the model corresponding to a general Boolean function in its 
full disjunctive normal form. 
 
\emph{The AND function $f(x_1,x_2) = x_1 \wedge x_2$.} Let $\A^1$ and $\A^2$
be two pairwise disjoint nonempty sets of features of cardinalities $\alpha_1$ and $\alpha_2$, respectively.
In an $(\al_1 \mid \al_2)$-AND-intersection representation of a graph 
$\G = (\V, \E)$, each vertex $v \in \V$ is assigned
two sets $A^i_v \subseteq \A^i$, $i \in [2]$, such that for every $u \neq v$, $u, v\in \V$,
it holds that $(u,v) \in \E$ if and only if $A^1_u \cap A^1_v \neq \varnothing$ and
$A^2_u \cap A^2_v \neq \varnothing$. The AND-intersection number of $\G$ is the smallest number of features used, i.e. $\al_1+\al_2$, in any $(\al_1 \mid \al_2)$-AND-intersection representation of the graph. The AND-intersection number of $\G$ is precisely the cointersection number of the graph.  

\emph{The OR function $f(x_1,x_2) = x_1 \vee x_2$}. 
Let $\A^1$ and $\A^2$ be two pairwise disjoint nonempty sets of features of cardinalities $\alpha_1$ and $\alpha_2$, 
respectively.
In an $(\al_1 \mid \al_2)$-OR-intersection representation of a graph 
$\G = (\V, \E)$, each vertex $v \in \V$ is assigned
two sets $A^i_v \subseteq \A^i$, $i \in [r]$, such that for every $u \neq v$, $u, v\in \V$,
it holds that $(u,v) \in \E$ if and only if $A^1_u \cap A^1_v \neq \varnothing$ or 
$A^2_u \cap A^2_v \neq \varnothing$. The OR-intersection number of $\G$ is the smallest number of features used, i.e. $\al_1 + \al_2$, in any $(\al_1 \mid \al_2)$-OR-intersection representation of the graph. 
Note that as $\A^1$ and $\A^2$ are disjoint, we can simply let $\A = \A^1 \cup \A^2$,
$\alpha = \al_1+\al_2$, and for each vertex $v$, let $A_v = A^1_v \cup A^2_v$. 
Then an $(\al_1 \mid \al_2)$-OR-intersection representation of
$\G$ simply corresponds to a way to assign to each vertex $v$ a set $A_v \subseteq \A$ of features such that for every $u \neq v$, $u, v\in \V$, it holds that $(u,v) \in \E$ if and only if $A_u \cap A_v \neq \varnothing$. 
This is precisely the definition of an intersection representation of $\G$. 
Thus, the OR-intersection number of a graph is the same as its intersection number, as long as the intersection number is at least two. 
  
\emph{NEGATION function $f(x) = \neg x$}. Let $\A$ be a nonempty set of features of cardinality $\alpha$. In an $(\alpha)$-NEGATION-intersection representation, each vertex $v \in \V$ is assigned
a set $A_v \subseteq \A$ such that for every $u \neq v$, $u, v\in \V$, it holds that $(u,v) \in \E$ if and only if $A_u \cap A_v = \varnothing$. 
The NEGATION-intersection number of $\G$ is the smallest number of features $\alpha$ used in any
$(\al)$-NEGATION-intersection representation of $\G$. It is immediate that this number is the same
as the intersection number of the complement of $\G$. 

Suppose we have a general Boolean function $f = f(x_1,x_2,\ldots,x_r)$ written in the
full disjunctive normal form, which involves three operators $\vee$, $\wedge$, and $\neg$. 
Let $\A^1, \A^2,\ldots, \A^r$ be disjoint sets of features of cardinalities $\al_1, \al_2,
\ldots,\al_r$, respectively. In an $(\al_1 \mid \al_2 \mid \cdots \mid \al_r)$-$f$-intersection representation of $\G$, each vertex $v \in \V$ is assigned $r$ sets $A^i_v
\subseteq \A^i$, $i \in [r]$, such that for every $u \neq v$, $u, v\in \V$, it holds that
$(u,v) \in \V$ if and only if the intersections 
of the sets $A^i_u$ and the sets $A^i_v$ follow the rule set by the propositional formula of $f$. 
For example, when $f(x_1,x_2,x_3) = x_1 \vee (x_2 \wedge x_3)$, it is required that 
$(u,v) \in \E$ if and only if the following statement is satisfied. 
\[
(A^1_u \cap A^1_v \neq \varnothing) \vee \Big((A^2_u \cap A^2_v \neq \varnothing)
\wedge (A^3_u \cap A^3_v \neq \varnothing)\Big).
\] 
In words, $u$ and $v$ are adjacent if and only if they share either an $\A^1$-label
or both an $\A^2$-label and an $\A^3$-label. 
For another example, take $f(x_1, x_2, x_3, x_4) = (x_1 \wedge x_2) \vee (\neg x_1 \wedge x_3 \wedge x_4)$. Then in a corresponding representation of $\G$, two vertices are adjacent if and only if either of the following two cases happens: (1) they share both an $\A^1$-label and an $\A^2$-label; or (2) they do not share any $\A^1$-label, but they share both an 
$\A^3$-label and an $\A^4$-label. 
The \emph{$f$-intersection number} of $\G$ is defined to be the smallest number of features used, namely $\sum_{i = 1}^r \al_i$, in any $(\al_1 \mid \al_2 \mid \cdots \mid \al_r)$-$f$-intersection representation of the graph.  

It is not immediately clear that the negation function has sufficiently strong relevance as the AND and OR functions in the context of social network analysis. Hence, we focus on Boolean functions that involve $\vee$ and $\wedge$ operations only and provide the following proposition 
generalizing Lemma~\ref{lem:lower_bound}.
\begin{proposition}
\label{pro:lower_bound_general}
Let $f = f(x_1,x_2,\ldots,x_r)$ be a Boolean function in the full disjunctive normal 
consisting only of $\vee$ and $\wedge$. Let $g_f = g_f(\al_1,\al_2,\ldots,\al_r)$ be an
integer-valued function on $r$ non-negative integral variables $\al_1,\al_2,\ldots,\al_r$, obtained from $f$ by replacing $x_i$ by $\al_i$ $(i \in [r])$, $\vee$ by $+$, and $\wedge$ by $\times$.  
Then the $f$-intersection number of a graph $\G$ is bounded from below by the 
optimal value of the objective function of the integer programming problem given below: 
\[
\begin{split}
\text{(IP)} \qquad &\text{minimize}\quad \sum_{i = 1}^r \al_i \\
&\text{subject to}\quad g_f(\al_1,\al_2,\ldots,\al_r) \geq \toG,\\
&\qquad \quad \qquad \mathbb{Z}\ni \al_i \geq 1, \forall i \in [r].
\end{split}
\]
\end{proposition}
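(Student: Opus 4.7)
The plan is to generalize the edge-clique-cover argument from Lemma~\ref{lem:lower_bound} to an arbitrary DNF formula $f = C_1 \vee C_2 \vee \cdots \vee C_k$, where each clause $C_j$ is a conjunction of some subset $S_j \subseteq [r]$ of the variables $x_i$. The key observation is that $g_f(\al_1,\ldots,\al_r) = \sum_{j=1}^k \prod_{i \in S_j} \al_i$ counts exactly the number of ``witness tuples'' one can form from the feature sets, one per clause, and I will show that each such witness tuple yields a clique in $\G$, with the resulting family of cliques covering every edge.

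First I would fix an arbitrary $(\al_1 \mid \cdots \mid \al_r)$-$f$-intersection representation $\{(A^1_v,\ldots,A^r_v):v\in\V\}$ of $\G$. For each clause $C_j$ and each tuple $\mathbf{t}=(t_i)_{i\in S_j}$ with $t_i \in \A^i$, I would define
\[
\C_{j,\mathbf{t}} \;=\; \{v \in \V : t_i \in A^i_v \text{ for every } i \in S_j\}.
\]
Any two distinct vertices $u,v \in \C_{j,\mathbf{t}}$ satisfy $t_i \in A^i_u \cap A^i_v$ for every $i \in S_j$, so clause $C_j$ is satisfied at $(u,v)$; since $f$ is a disjunction, this forces $(u,v) \in \E$. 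Hence $\C_{j,\mathbf{t}}$ is a clique of $\G$.

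Next I would verify that $\mathcal{C} = \{\C_{j,\mathbf{t}} : j \in [k],\ \mathbf{t}\in \prod_{i\in S_j}\A^i\}$ covers every edge. If $(u,v)\in\E$, then by definition the Boolean formula $f$ evaluated on the indicator pattern of the intersections $A^i_u\cap A^i_v$ must be true, so at least one clause $C_{j_0}$ is satisfied. For that $j_0$, for every $i\in S_{j_0}$ we may pick some $t_i \in A^i_u\cap A^i_v$, assembling a tuple $\mathbf{t}^\star$ with $\{u,v\}\subseteq \C_{j_0,\mathbf{t}^\star}$. Thus $\mathcal{C}$ is an edge clique cover of $\G$, and since $|\mathcal{C}| = \sum_{j=1}^k \prod_{i\in S_j}\al_i = g_f(\al_1,\ldots,\al_r)$, we obtain
\[
g_f(\al_1,\ldots,\al_r) \;\ge\; \toG.
\]
Minimizing $\sum_{i=1}^r \al_i$ subject to this constraint (and $\al_i \ge 1$, to make every feature set nonempty) is precisely the integer program (IP), and its optimum is a lower bound on the $f$-intersection number of $\G$.

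The only subtle point is the bookkeeping of how $g_f$ arises from $f$: induction on the structure of the DNF (\emph{or} a direct reading, since each clause contributes a product of the $\al_i$ for its variables and clauses are combined by disjunction, which translates into a sum) confirms that $|\mathcal{C}|$ equals $g_f(\al_1,\ldots,\al_r)$ exactly. Because there are no negations, every clause is ``positively'' satisfiable via a witness tuple, which is why the argument reduces cleanly to counting cliques; handling $\neg$ would require a fundamentally different approach and is excluded by hypothesis. This is the only place where the absence of negation is used, and it is the step I would be most careful with in writing up.
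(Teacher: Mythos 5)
Your proposal is correct and follows essentially the same route as the paper: for each clause and each tuple of features (one per variable in the clause) you form a clique, observe that these $g_f(\al_1,\ldots,\al_r)$ cliques constitute an edge clique cover, and conclude $g_f(\al_1,\ldots,\al_r)\geq \toG$, which is exactly the paper's generalization of the edge-clique-cover argument in Lemma~\ref{lem:lower_bound}. Your write-up merely spells out the verification steps (why each witness set is a clique and why every edge is covered) that the paper leaves implicit.
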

\begin{proof} 
Suppose that we have an $(\al_1 \mid \al_2 \mid \cdots \mid \al_r)$-$f$-intersection representation of the graph $\G$ 
with the corresponding sets of labels $\A^1, 
\A^2,\ldots, \A^r$. For any clause $x_{i_1}\wedge x_{i_2} \wedge \cdots \wedge x_{i_s}$
of $f$, a tuple $(a^{i_1}, a^{i_2}, \ldots, a^{i_r})$ where $a^{i_j} \in \A^{i_j}$
corresponds to a clique in $\G$, which consists of all vertices $v \in \V$
that have $a^{i_1}, a^{i_2}, \ldots, a^{i_r}$ in their feature sets. 
Note that there are in total $g_f(\al_1, \al_2,\ldots,\al_r)$ such cliques.
As each edge of $\G$ must belong to one of these cliques, these cliques form
an edge clique cover of $\G$. Therefore, $g_f(\al_1, \al_2,\ldots,\al_r) \geq \toG$.
\end{proof} 

If we ignore the condition that $\al_i \in \mathbb{Z}$ in the integer programming problem (IP) 
stated in Proposition~\ref{pro:lower_bound_general}, we obtain a real-valued programming 
problem, referred to as (P). An optimal solution to (P) also provides a lower bound
on the $f$-intersection number of the graph. Generally, we can find necessary conditions for a solution of (P) to exist by using either
the method of Lagrange multipliers or the Karush-Kuhn-Tucker (KKT) conditions. 
We illustrate this observation with the following example. 

\begin{example}
\label{ex:general}
Let $f(x_1,x_2,x_3) = x_1 \vee (x_2 \wedge x_3)$. 
Using the notation in Proposition~\ref{pro:lower_bound_general}, $g_f(\al_1,\al_2,\al_3)
= \al_1+\al_2\al_3$. Then the optimal value of the objective function of the following programming problem serves as a lower bound for the $f$-intersection number of a graph $\G$:
\[
\begin{split}
\text{(P)} \qquad &\text{minimize}\quad \al_1+\al_2+\al_3 \\
&\text{subject to}\quad \al_1+\al_2\al_3 \geq \toG,\\
&\qquad \quad \qquad \mathbb{R} \ni \al_i \geq 1, \forall i \in [3].
\end{split}
\]
In order to use the method of Lagrange multipliers,
we first introduce the slack variables $\beta_i$, $i \in [4]$, to convert the inequality
constraints into equality constraints as follows. The constraint $\al_i \geq 1$
is converted into the new constraint $\al_i - \beta_i^2 - 1 = 0$, for each $i \in [3]$, 
and the constraint $\al_1+\al_2\al_3 \geq \theta_1$ is converted into the new constraint
$\al_1+\al_2\al_3 - \beta_4^2 - \theta_1 = 0$. Let $\lam_i$, $i \in [4]$, be the Lagrange multipliers. We formulate the Lagrangian 
\[
\begin{split}
&\L(\al_1, \al_2, \al_3, \beta_1,\beta_2, \beta_3, \beta_4, \lam_1, \lam_2, \lam_3, \lam_4)\\
&= \sum_{i = 1}^3 \al_i + \sum_{i = 1}^3 \lam_i(\al_i - \beta_i^2 - 1) 
+ \lam_4(\al_1+\al_2\al_3 - \beta_4^2 - \theta_1).\\
\end{split}
\]
The method of Lagrange multipliers states that if we examine all stationary points of the
Lagrangian, at which $\nabla \L = \mathbf{0}$, where
$\nabla \L = (\frac{\partial \L}{\partial \al_1}, \ldots, 
\frac{\partial \L}{\partial \al_3}, \frac{\partial \L}{\partial \beta_1},\ldots,
\frac{\partial \L}{\partial \beta_4},\frac{\partial \L}{\partial \lam_1},\ldots,
\frac{\partial \L}{\partial \lam_4})$, then the one that leads to the minimum objective
value $\sum_{i=1}^3\al_i$ is an optimal solution to (P).     
Therefore, using this method, we arrive at the following system $\nabla \L = \mathbf{0}$ of equations:
\begin{subequations} 
\label{eq:Lagrangian}
\begin{numcases}{}
1 + \lam_1 + \lam_4 = 0, \label{seq:a}\\
1 + \lam_2 + \lam_4\al_3 = 0, \label{seq:b}\\
1 + \lam_3 + \lam_4\al_2 = 0, \label{seq:c}\\
\lam_i\beta_i = 0, \quad i \in [4], \label{seq:d}\\
\al_i - \beta_i^2 - 1 = 0, \quad i \in [3], \label{seq:e}\\
\al_1 + \al_2\al_3 - \beta_4^2 - \theta_1 = 0. \label{seq:f}
\end{numcases}
\end{subequations}
A straightforward way to obtain all the solutions of the system~\eqref{eq:Lagrangian}
is by examining all 16 cases, each of which captures whether $\lam_i = 0$ or 
$\beta_i = 0$, $i \in [4]$ (from \eqref{seq:d}). We can ignore certain cases due to symmetry.
As a consequence, we find that the objective function $\sum_{i = 1}^3\al_i$ 
is minimized when $\al_1 = 1$ and $\al_2 = \al_3 = \sqrt{\theta_1-1}$, 
which gives us the lower bound $1 + 2\sqrt{\theta_1-1}$ on the $f$-intersection number of $\G$. 

Another example we considered is $f = (x_1 \wedge x_2) \vee (x_1 \wedge x_3) \vee (x_2 \wedge x_3)$. 
Again, applying the method of Lagrange multipliers and Proposition~\ref{pro:lower_bound_general}, it may be
shown that the $f$-intersection number of $\G$ is at least $\sqrt{3\theta_1}$.  
\end{example}

An upper bound on the $f$-intersection number of a graph of bounded degree, where $f$ only
involves the $\vee$ and $\wedge$ operations, may be obtained in the same way as that for the cointersection number, in Theorem~\ref{thm:bounded_degree}. We present this fact below. 

\begin{theorem} 
\label{thm:bounded_degree_AND}
Let $\G$ be a graph on $n$ vertices with $\Delta(\G) \leq d$. 
Let $f = f(x_1,x_2,\ldots,x_r)$ be a Boolean function in the full disjunctive normal 
consisting of only $\vee$ and $\wedge$.
Let $s$ be the largest number of literals that appear in any clause of $f$. 
Then the $f$-intersection number of $\G$ is at most $c(d, r, s)n^{1/s} + r-s$, where $c(d, r, s)$ is a function of $d$, $r$, and $s$.  
\end{theorem}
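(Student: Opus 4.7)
The plan is to adapt the probabilistic construction from Theorem~\ref{thm:bounded_degree} by isolating a single maximum-size clause of $f$ and letting it carry the entire burden of representing edges. Without loss of generality I will assume that the DNF of $f$ is minimal in the sense that no clause's literal set is contained in another's; otherwise, redundant clauses may be removed without altering $f$. Fix a clause $C_{j^*}$ with literal index set $S_{j^*} \subseteq [r]$ of cardinality $s$. Minimality together with the maximality of $|S_{j^*}|=s$ then implies that for every $j \neq j^*$, the set $S_j$ contains at least one index outside $S_{j^*}$.

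Next, I will set $\alpha_i = \alpha$ for $i \in S_{j^*}$ and $\alpha_i = 1$ for $i \notin S_{j^*}$, where $\alpha$ is to be chosen of order $d^{2}\, n^{1/s}$. To each edge $e \in \E$ I independently assign a tuple $(a^i(e))_{i \in S_{j^*}}$ with $a^i(e)$ uniform in $\A^i$, and define $A^i_v = \{a^i(e) \colon e \text{ incident to } v\}$ for $i \in S_{j^*}$ and $A^i_v = \varnothing$ for $i \notin S_{j^*}$. By construction every edge $(u,v) \in \E$ satisfies clause $C_{j^*}$ deterministically. For a non-edge $(u,v)$, all variables indexed outside $S_{j^*}$ evaluate to $0$, so by the minimality observation (and because $f$ uses no negations) no clause other than $C_{j^*}$ can possibly fire; hence $f$ evaluates to true at $(u,v)$ iff the bad event $E_{u,v} = \{A^i_u \cap A^i_v \neq \varnothing \text{ for all } i \in S_{j^*}\}$ occurs.

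The rest mirrors the proof of Theorem~\ref{thm:bounded_degree}. Each $|A^i_v| \leq d$, and for non-adjacent $u,v$ the sets $A^i_u,A^i_v$ are independent across $i \in S_{j^*}$ (they depend on disjoint coordinates of independent edge-labels), so
\[
\pr(E_{u,v}) \leq \prod_{i \in S_{j^*}} \frac{d^2}{\alpha-d+1} = \left(\frac{d^2}{\alpha-d+1}\right)^{\!s} =: P.
\]
Each $E_{u,v}$ depends on at most $D \leq 2n(d+1)$ other events, by the identical combinatorial count used in Theorem~\ref{thm:bounded_degree}. Solving $PD \leq 1/4$ yields $\alpha - d + 1 \geq (8(d+1))^{1/s}\, d^{2}\, n^{1/s}$, so choosing $\alpha = \lceil c'(d,s)\, n^{1/s}\rceil$ for a suitable $c'(d,s)$ and invoking the Lov\'asz Local Lemma produces a valid $f$-intersection representation. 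The total feature count is $s\alpha + (r-s) \leq c(d,r,s)\, n^{1/s} + (r-s)$ with $c(d,r,s)$ absorbing $s\, c'(d,s)$ together with any rounding constants, as required.

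The main obstacle is the minimality reduction. In a non-minimal $f$, a shorter clause $C_j$ with $S_j \subsetneq S_{j^*}$ would also fire on non-edges, but with probability only $(d^2/(\alpha-d+1))^{|S_j|}$, which is much larger than $P$ and would not yield the claimed $n^{1/s}$ rate. Once one justifies passing to a minimal equivalent DNF and verifies the independence across coordinates $i \in S_{j^*}$ that underpins the product bound on $\pr(E_{u,v})$, the remainder is a direct generalization of the $s=2$ case handled in Theorem~\ref{thm:bounded_degree}.
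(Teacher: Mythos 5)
Your argument is essentially the paper's own proof: fix a maximum ($s$-literal) clause, give its coordinates feature alphabets of size $\Theta(n^{1/s})$ and singleton alphabets elsewhere, label the edges randomly only in those $s$ coordinates so that no other clause can fire on a non-edge, and apply the Lov\'asz Local Lemma with the same $P=\bigl(d^2/(\alpha-d+1)\bigr)^{s}$ and $D=2n(d+1)$. Even your preliminary reduction to a DNF in which no clause's literal set contains another's is the same step the paper takes (with the same brief justification about discarding redundant clauses), so the two proofs coincide in all essentials.
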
 
\begin{proof} 
We can assume that no clause $\C'$ of $f$ is a sub-clause of another clause $\C$ (i.e., that all of
the literals of $\C'$ also appear in $\C$), as otherwise we can always remove $\C'$ and obtain an equivalent formula of $f$. 

Now let $\C$ be a clause of $f$ with $s$ literals, referred to as the \emph{leading} clause. Relabeling the indices if necessary, 
we can assume that $\C = \wedge_{i=1}^s x_i$. Let $A^1, \ldots, A^r$ be $r$ pairwise disjoint
sets of features such that $\al_i \define |A^i| = c'(d, r, s)n^{1/s}$ for $i \in [s]$, while 
$\al_j \define |A^j| = 1$
for all $j > s, j \in [r]$. Here $c'(d, r, s)$ is a function of $d$, $r$, and $s$, which will be determined later.
Similar to the proof of Theorem~\ref{thm:bounded_degree}, we show that there exists an
$(\al_1 \mid \al_2 \mid \cdots \mid \al_r)$-$f$-intersection representation of $\G$ by invoking 
the Lov\'{a}sz Local Lemma~\cite{Lovasz1968}. As a consequence, the $f$-intersection number of 
$\G$ is at most $c(d,r,s)n^{1/s} + r - s$, where $c(d, r, s) \define sc'(d, r, s)$.    

We independently assign to every edge $e$ of $\G$ a randomly chosen set of features
$\{a^1(e),a^2(e),\ldots,a^s(e)\}$. Note that we do not assign to $e$ any label $a^j \in \A^j$, 
for $j > s$. For every vertex $v \in \V$ and for every $i \in [r]$, let
\[
A^i_v = \{a^i(e): e = (u,v) \in \E\}.
\]    
Then $A^j_v = \varnothing$ for $j > s$. Hence, $A^j_u \cap A^j_v = \varnothing$ for every $u \neq v$ and $j > s$. Moreover, we know that for any clause $\C' \neq \C$, there must exist a $j > s$ such that $\C'$ contains $x_j$, for otherwise, $\C'$ would be a sub-clause of $\C$.  
Therefore, this feature assignment is an $f$-intersection representation of $\G$ if and
only if for every $u \neq v$, $u, v\in \E$, it holds that
\begin{equation} 
\label{eq:boolean_condition}
(u,v) \in \V \Longleftrightarrow A^i_u \cap A^i_v \neq \varnothing, \text{ for all } i \in [s].
\end{equation} 
In other words, we can focus only on the leading clause $\C = \vee_{i = 1}^s x_i$ and ignore
all other clauses of $f$. 

It is clear that \eqref{eq:boolean_condition} is satisfied for all pairs $(u,v) \in \E$.
We now define for each pair $(u,v) \notin \E$ a bad event $E_{u,v}$ where $A^i_u \cap A^i_v
\neq \varnothing$ for all $i \in [s]$. The goal is to show that
there exists a function $c'(d,r,s)$ of $d$, $r$, and $s$, so that $PD \leq 1/4$, where
$\text{Prob}(E_{u,v}) \leq P$ and each bad event is dependent on at most $D$ other bad events.
Then by the Lov\'{a}sz Local Lemma~\cite{Lovasz1968}, we may conclude that there exists a 
way to assign features to the edges of $\G$ that leads to an $f$-intersection representation 
of $\G$. Just as in the proof of Theorem~\ref{thm:bounded_degree}, we have
\[
\begin{split}
\text{Prob}(E_{u,v}) &= \prod_{i=1}^s \text{Prob}(A^i_u \cap A^i_v \neq \varnothing)\\
&\leq P \define \left(\dfrac{d^2}{\al_i - d + 1}\right)^s
= \dfrac{d^{2s}}{\big(c'(d, r, s)n^{1/s} - d + 1\big)^s}.
\end{split}
\]
We also have $D = 2n(d+1)$. It is straightforward to verify that for $c'(d,r,s) \define (8d^{2s+2})^{1/s} + d - 1$, we have $PD \leq 1/4$.  
\end{proof}

\section*{Acknowledgment}
The authors thank Gregory J. Puleo and Charalampos Tsourakakis for helpful 
discussions. Part of the results will be presented at ISIT 2016.
This work was funded by NIH BD2K Grant 1U01CA198943-01 and NSF Grant IOS 1339388 and CCF 11-17980 and NSF 239 SBC Purdue 41010-38050. 
\vspace{-0.1in}
\bibliographystyle{IEEEtran}
\bibliography{CoIntersectionRepresentation}

\end{document}